\newcommand{\ie}{{\em i.e.\ }}
\newcommand{\eg}{{\em e.g.\ }}
\newcommand{\cf}{{\em cf.\ }}
\newtheorem{theorem}[subsection]{Theorem}
\newtheorem{lemma}[subsection]{Lemma}
\newtheorem{proposition}[subsection]{Proposition}
\newtheorem{corollary}[subsection]{Corollary}
\theoremstyle{definition}
\newtheorem{nonexample}[subsection]{Non-example}
\newtheorem{notation}[subsection]{\textbf{Notation}}
\newtheorem{example}[subsection]{Example}
\newtheorem{remark}[subsection]{\textbf{Remark}}
\numberwithin{equation}{section}
\newcommand{\N}{\mathbf{N}}
\newcommand{\Z}{\mathbf{Z}}
\newcommand{\ko}{\: , \;}
\newcommand{\ul}{\underline}
\newcommand{\we}{\wedge}
\renewcommand{\tilde}[1]{\widetilde{#1}}
\newcommand{\ra}{\rightarrow}
\newcommand{\arr}[1]{\stackrel{#1}{\rightarrow}}
\newcommand{\opname}[1]{\operatorname{\mathsf{#1}}}
\renewcommand{\mod}{\opname{mod}\nolimits}
\newcommand{\Mod}{\opname{Mod}\nolimits}
\newcommand{\Sum}{{\opname{Sum}}}
\newcommand{\id}{\mathbf{1}}
\renewcommand{\L}{\mathbf{L}}
\newcommand{\ten}{\otimes}
\newcommand{\colim}{\opname{colim}\nolimits}
\newcommand{\Mcolim}{\opname{Mcolim}}
\newcommand{\op}[1]{\opname{#1}\nolimits}
\renewcommand{\H}[1]{{H}^{#1}}
\newcommand{\ca}{{\mathcal A}}
\newcommand{\cc}{{\mathcal C}}
\newcommand{\cd}{{\mathcal D}}
\newcommand{\ch}{{\mathcal H}}
\newcommand{\ci}{{\mathcal I}}
\newcommand{\cs}{{\mathcal S}}
\newcommand{\ct}{{\mathcal T}}
\newcommand{\cu}{{\mathcal U}}
\newcommand{\cy}{{\mathcal Y}}
\newcommand{\eps}{\varepsilon}
\renewcommand{\phi}{\varphi}
\newcommand{\Hom}{\opname{Hom}}
\newcommand{\RHom}{\opname{RHom}}
\newcommand{\REnd}{\opname{REnd}}
\newcommand{\cone}{\opname{Cone}\nolimits}
\newcommand{\per}{\opname{per}}
\newcommand{\comment}[1]{}
\begin{document}
\title[Weight structures and simple dg modules]{Weight structures and simple dg modules for positive
dg algebras}

\author{Bernhard Keller}
\address{Bernhard Keller\\
Universit\'e Paris Diderot -- Paris 7\\
UFR de Math\'ematiques\\
Institut de Math\'ematiques de Jussieu, UMR 7586 du CNRS \\
Case 7012\\
B\^{a}timent Chevaleret\\
75205 Paris Cedex 13\\
France } \email{keller@math.jussieu.fr}

\author{Pedro Nicol\'as}
\address{Pedro Nicol\'as, Universidad de Murcia, Facultad de Educaci\'on, Campus de Espinardo, 30100 Murcia, ESPA\~NA}
\email{pedronz@um.es}

\thanks{The second named author has done part of this work while being a postdoctoral Fellow of the Fundaci\'on S\'eneca de la Regi\'on de Murcia}

\date{\today}


\begin{abstract} Using techniques due to Dwyer--Greenlees--Iyengar we construct weight structures in triangulated categories generated by compact objects. We apply our result to show that, for a dg category whose homology vanishes in negative degrees and is semi-simple in degree $0$, each simple module over the homology lifts to a dg module which is unique up to isomorphism in the derived category. This allows us, in certain situations, to deduce the existence of a canonical $t$-structure on the perfect derived category of a dg algebra. From this, we can obtain a bijection between hearts of $t$-structures and sets of so-called simple-minded objects for some dg algebras (including Ginzburg algebras associated to quivers with potentials). In three appendices, we elucidate the relation between Milnor colimits and homotopy colimits and clarify the construction of $t$-structures from sets of compact objects in triangulated categories as well as the construction of a canonical weight structure on the unbonded derived category of a non positive dg category.
\end{abstract}

\subjclass{18E30}

\maketitle
\tableofcontents

\section{Introduction}
Finite-dimensional modules over an associative unital algebra may be
described as built up from simple modules or as presented by
projective modules. The interplay between these two descriptions is
at the heart of the interpretation  of Koszul duality for dg
algebras (and categories) given in \cite{Keller1994a}, cf. also
\cite{DwyerGreenlees02} \cite{DwyerGreenleesIyengar06}. However, in
order to apply this theory a dg algebra $A$, we need the `simple dg
$A$-modules' as an additional datum. Clearly, a necessary condition
for the existence of such dg modules is that the homology $H^*(A)$
should be equipped with a suitable set of graded simple modules. One
may ask whether this condition is also sufficient. Now realizing
modules over the homology $H^*(A)$ as homologies of dg modules over
$A$ is in general a hard problem, \cf for example
\cite{BensonKrauseSchwede04}. In this paper, we treat one class of
dg algebras where the problem of realizing the simple homology
modules has a satisfactory solution. We define this class by merely
imposing conditions on the homology $H^*(A)$: It should be
concentrated in degrees $\geq 0$ and semi-simple in degree~$0$. Let
us point out that if $H^0(A)$ is a field, our result follows from
Propositions~3.3 and 3.9 of \cite{DwyerGreenleesIyengar06}, as
kindly explained to us by Srikanth Iyengar \cite{Iyengar10}. The
class we consider contains the Koszul duals of smooth dg algebras
$B$ whose homology is concentrated in non positive degrees and
finite-dimensional in each degree. Important examples of these are
the Ginzburg dg algebras associated to quivers with potential
\cite{Ginzburg} \cite{KellerYang}. The proof of our result is based
on the construction of canonical weight structures on suitable
triangulated categories (section~\ref{s:weight-structures}) in
analogy to results obtained by
Pauksztello (Theorem~2.4 of
\cite{Pauksztello08}). These weight structures are also useful in a
second application, namely the construction of a $t$-structure on
the perfect derived category of a dg algebra $A$ in our class
(section~\ref{s:construction-of-t-structures}). This $t$-structure
has as its left aisle the closure under extensions, positive shifts
and direct summands of the free module $A$. Its heart is a length
category whose simple objects are the indecomposable factors of $A$
in $\per A$. Let us point out that the existence of this
$t$-structure also follows from a recent result by Rickard-Rouquier
\cite{RickardRouquier10}.

As another application, we establish a bijection between families of
`simple-minded objects' (a piece of terminology due to J.~Rickard and,
indepently, to K\"onig-Liu \cite{KoenigLiu})
and hearts of $t$-structures in suitable triangulated categories
(section~\ref{s:hearts}). Further applications will be given in the
forthcoming paper \cite{KellerNicolas11}.

In establishing our main theorem, we need foundational results on
the precise link between Milnor colimits and homotopy colimits (in
the sense of derivators) and on the construction of $t$-structures
from sets of compact objects. We prove these in the two appendices.
In another appendix, we prove the existence of a canonical weight structure on
the (unbounded) derived category of a non positive dg category,
in analogy with a result by Bondarko \cite[\S 6]{Bondarko0704.4003v8}.

\section{Acknowledgments}
The authors thank Chris Brav and David Pauksztello for stimulating
conversations on the material of this paper. They are very grateful
to Srikanth Iyengar for pointing out reference
\cite{DwyerGreenleesIyengar06} and explaining how Propositions~3.3
and 3.9 in that paper imply Corollary~\ref{lifting graded simples}
below in the case where $H^0A$ is a field. They are also very
grateful to Dong Yang for carefully reading the first version of
this article.

\section{Terminology and notations}\label{notation}

In this article, `graded' will always mean `$\Z$-graded', and `small' will be
frequently used to mean `set-indexed'. A {\em length category} is an abelian category
where each object has finite length.

We write $\Sigma$ for the shift functor of any triangulated category. Let
\[L_{0}\arr{f_{0}}L_{1}\arr{f_{1}}L_{2}\ra\dots
\]
be a sequence of morphisms in a triangulated category $\cd$.  Its
{\em Milnor colimit} \cite{Milnor1962} is an object, denoted by $\op{Mcolim}_{n\geq
0}L_{n}$, which fits into the {\em Milnor triangle},
\[\xymatrix{
\coprod_{n\geq 0}L_{n}\ar[r]^{\id-\sigma} & \coprod_{n\geq 0}L_{n}\ar[r] & \op{Mcolim}_{n\geq 0}L_{n}\ar[r] & \Sigma\coprod_{n\geq 0}L_{n},
}
\]
where $\sigma$ is the morphism with components
\[
\xymatrix{
L_{n}\ar[r]^{f_{n}} & L_{n+1}\ar[r]^{\text{can}\hspace{0.5cm}} & \coprod_{n\geq 0}L_{n}.
}
\]
Thus, the Milnor colimit is determined up to a (non unique) isomorphism.
The notion of
Milnor colimit has appeared in the literature under the name of
\emph{homotopy colimit} (see \cite[Definition
2.1]{BokstedtNeeman1993}, \cite[Definition 1.6.4]{Neeman2001}).
However, Milnor colimits are not functorial and, in general, they do
not take a sequence of triangles to a triangle of $\cd$. Thus, we
think it is better to keep this terminology for the notions
appearing in the theory of derivators \cite{Maltsiniotis2001,
Maltsiniotis2007, CisinskiNeeman2005}. For a study of the
relationship between Milnor colimits and homotopy colimits see our
Appendix 1.

Let
\[\dots\ra L_{2}\arr{f_{1}}L_{1}\arr{f_{0}}L_{0}
\]
be a sequence of morphism in a triangulated category $\cd$. Its {\em Milnor limit} is an object, denoted by $\op{Mlim}_{n\geq
0}L_{n}$, which fits into the triangle,
\[\xymatrix{
\Sigma^{-1}\prod_{n\geq 0}L_{n}\ar[r] & \op{Mlim}_{n\geq 0}L_{n}\ar[r] &\prod_{n\geq 0}L_{n}\ar[r]^{\id-\sigma} & \prod_{n\geq 0}L_{n},
}
\]
where $\sigma$ is the morphism with components
\[
\xymatrix{
L_{n}\ar[r]^{f_{n-1}} & L_{n-1}\ar[r]^{\text{can}\hspace{0.5cm}} & \prod_{n\geq 0}L_{n}
}
\]
for $n\neq 0$, and the zero map in the component $0$
\[0: L_{0}\ra\prod_{n\geq 0}L_{n}.
\]
As in the case of the Milnor colimit, the Milnor limit is determined up to a (non unique) isomorphism.

If $\cd$ is a triangulated category and $\cs$ is a set of objects of
$\cd$, we denote by $\op{thick}_{\cd}(\cs)$ the smallest full
subcategory of $\cd$ containing $\cs$ and closed under extensions,
shifts and direct summands.

Let $k$ be a field and $A$ a dg $k$-algebra. We denote the derived
category of $A$ by $\cd A$, \cf \cite{Keller1994a}. The {\em perfect
derived category} of $A$, denoted by $\per A$, is
$\op{thick}_{\cd A}(A)$. The {\em finite-dimensional derived
category} of $A$, denoted by $\cd_{fd}A$, is the full subcategory of
$\cd A$ formed by those dg modules $M$ whose homology is of finite
total dimension:
\[\sum_{p\in\Z}\op{dim}_{k}\H{p}(M)<\infty.
\]

\section{Weight structures from compact objects}
\label{s:weight-structures}
\label{s:weight-structures-from-compact-objects}

Let us recall
the definition of a weight structure from \cite{Bondarko0704.4003v8} and
\cite{Pauksztello08}  (it is called co-$t$-structure in \cite{Pauksztello08}):
A {\em weight structure} on a triangulated category $\ct$ is
a pair of full additive subcategories $\ct^{>0}$ and $\ct^{\leq 0}$ of $\ct$
such that
\begin{itemize}
\item[w0)] both $\ct^{>0}$ and $\ct^{\leq 0}$ are stable under taking direct
factors;
\item[w1)] the subcategory $\ct^{>0}$ is stable under $\Sigma^{-1}$ and
the subcategory $\ct^{\leq 0}$ is stable under $\Sigma$;
\item[w2)] we have $\ct(X, Y)=0$ for all $X$ in $\ct^{>0}$ and
all $Y$ in  $\ct^{\leq 0}$;
\item[w3)] for each object $X$ of $\ct$, there is a {\em truncation triangle}
\[
\sigma_{>0}(X) \to X \to \sigma_{\leq 0}(X) \to \Sigma \sigma_{>0}(X)
\]
with $\sigma_{>0}(X)$ in $\ct^{>0}$ and $\sigma_{\leq 0}(X)$ in $\ct^{\leq 0}$.
\end{itemize}
Notice that the objects $\sigma_{>0}(X)$ and $\sigma_{\leq 0}(X)$ in
the truncation triangle are not functorial in $X$. The following
theorem and its proof are based on Propositions~3.3 and 3.9 of
\cite{DwyerGreenleesIyengar06}. Compared to the main result of
\cite{Pauksztello10}, the theorem has stronger hypotheses: assumption c)
is not present in [loc. cit.]; but it also has a stronger conclusion:
the description of the weight structure in terms of homology.

\begin{theorem}\label{canonical weight structure}
Suppose that $\ct$ is a triangulated category with small coproducts and that $\cs\subset\ct$ is a full additive subcategory stable under taking direct summands such that
\begin{itemize}
\item[a)] $\cs$ {\em compactly generates} $\ct$, \ie the functors $\ct(S,?):\ct\ra\Mod\Z\ko S\in\cs$, commute with small coproducts, and if $M\in\ct$ satisfies $\ct(\Sigma^pS,M)=0$ for all $p\in\Z\ko S\in\cs$, then $M=0$;
\item[b)] we have $\ct(L, \Sigma^p M)=0$ for all $L$ and $M$ in $\cs$ and all integers $p<0$;
\item[c)] the category $\Mod\cs$ of additive functors $\cs^{op}\to \Mod\Z$ is semi-simple.
\end{itemize}
For $X$ in $\ct$ and $p\in\Z$, we write $H^pX$ for the object $L \mapsto \ct(L,\Sigma^p X)$ of $\Mod\cs$. Then we have:
\begin{itemize}
\item[1)] There is a unique weight structure $(\ct^{>0}, \ct^{\leq 0})$ on $\ct$ such that $\ct^{\leq 0}$ is formed by the objects $X$ with $H^pX=0$ for all  $p>0$ and $\ct^{>0}$ is formed by the objects $X$ with $H^pX=0$ for all $p\leq 0$.
\item[2)] For each object $X$, there is a truncation
triangle
\begin{equation}
\label{eq:truncation-triangle}
\sigma_{>0}(X) \to X \to \sigma_{\leq 0}(X) \to \Sigma \sigma_{>0}(X)
\end{equation}
such that the morphism $X \to \sigma_{\leq 0}(X)$ induces an isomorphism in $H^p$ for $p\leq 0$ and the morphism $\sigma_{>0}(X) \to X$ induces an isomorphism in $H^p$ for $p>0$.
\end{itemize}
\end{theorem}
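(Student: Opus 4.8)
\emph{The plan.} The theorem prescribes both $\ct^{\leq 0}$ and $\ct^{>0}$ explicitly in terms of the vanishing of the $H^p$, so there is literally a single candidate pair and \emph{uniqueness is automatic} once we check that this pair is a weight structure. First I would record the bookkeeping used throughout. Since $H^p(\Sigma X)=H^{p+1}X$, the stability (w1) of $\ct^{\leq 0}$ under $\Sigma$ and of $\ct^{>0}$ under $\Sigma^{-1}$ is immediate, and (w0) follows because $H^p$ is additive with values in the abelian (indeed semisimple) category $\Mod\cs$. Because each $L\in\cs$ is compact (assumption a), $H^p$ commutes with coproducts and with Milnor colimits, i.e. $H^p(\Mcolim Y_n)\cong\colim H^pY_n$; and for a coproduct of shifted objects one has $H^p(\coprod_j\Sigma^{-q}L_j)=\coprod_j H^{p-q}L_j$, which vanishes for $p<q$ by assumption b).

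The heart is the construction of the truncation triangle (w3 and part~2). I would build $\sigma_{\leq 0}(X)$ as a Milnor colimit of a tower $X=Y_0\to Y_1\to\cdots$ obtained by repeatedly killing \emph{all} positive homology. At stage $n$ I use assumption c) to write each $H^pY_n$ ($p\geq 1$) as a coproduct of representable functors $\ct(-,S)|_{\cs}$ with $S$ simple in $\cs$; this is where semisimplicity enters decisively, as it makes every object of $\Mod\cs$ free and hence realizable on the nose. Choosing simple objects $S^{(p)}_{n,j}\in\cs$ and maps $\Sigma^{-p}S^{(p)}_{n,j}\to Y_n$ inducing, for each $p\geq 1$, an isomorphism $\coprod_j H^{0}S^{(p)}_{n,j}\xrightarrow{\sim}H^pY_n$, I set $P_n=\coprod_{p\geq 1,\,j}\Sigma^{-p}S^{(p)}_{n,j}$ and $Y_{n+1}=\cone(P_n\to Y_n)$. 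Then $H^{\leq 0}P_n=0$, so the long exact sequence gives $H^pY_n\xrightarrow{\sim}H^pY_{n+1}$ for $p\leq 0$ (the crucial point being that $H^1P_n\to H^1Y_n$ is an \emph{isomorphism}, so no spurious class is created in degree $0$), while for $p\geq 1$ the map $H^pP_n\to H^pY_n$ is surjective, forcing the transition $H^pY_n\to H^pY_{n+1}$ to vanish. Setting $\sigma_{\leq 0}(X)=\Mcolim Y_n$, I get $H^p\sigma_{\leq 0}(X)=\colim H^pY_n$, equal to $H^pX$ for $p\leq 0$ and to $0$ for $p\geq 1$ (a colimit along zero maps). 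Defining $\sigma_{>0}(X)$ as the fibre of $X\to\sigma_{\leq 0}(X)$ and reading off the long exact sequence yields $\sigma_{>0}(X)\in\ct^{>0}$ and the two homology-isomorphism statements, the case $p=1$ being forced by the isomorphism already secured in degree $0$.

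I expect the \textbf{main obstacle} to be exactly the higher homology the cells carry: since b) only guarantees $H^{\geq 0}$ for objects of $\cs$, an attached cell $\Sigma^{-p}S$ has homology spread over all degrees $\geq p$, so killing $H^pY_n$ necessarily leaks new classes into higher degrees. The key insight making the construction converge is that one need \emph{not} prevent this leakage: killing all positive homology simultaneously at each stage makes every positive-degree transition map vanish, and a sequential colimit along zero maps is zero, so the leaked classes die in the Milnor colimit regardless. What must instead be controlled is degree $0$, and this is precisely what the isomorphism (not merely surjection) $H^1P_n\to H^1Y_n$ secures --- the point at which semisimplicity of $\Mod\cs$ is indispensable. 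This is the mechanism underlying Propositions~3.3 and 3.9 of \cite{DwyerGreenleesIyengar06}.

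It remains to prove the orthogonality (w2), by generation together with a Mittag--Leffler vanishing. For $X\in\ct^{>0}$ I would build a connectivity-increasing tower $N_1\to N_2\to\cdots$, with $N_m$ a finite extension of cells $\Sigma^{-q}L$ ($1\leq q\leq m$) and the cone of $N_m\to X$ having no homology in degrees $\leq m$; since $H^{\leq 0}X=0$ the same cell-attachment technique applies stage by stage and gives $X\cong\Mcolim N_m$. For $Y\in\ct^{\leq 0}$ each cell satisfies $\ct(\Sigma^{-q}L,Y)=H^qY(L)=0$ for $q\geq 1$, whence $\ct(N_m,Y)=0$; moreover the cone of $N_m\to N_{m+1}$ is a coproduct of cells $\Sigma^{-(m+1)}L$, for which $\ct(\Sigma\,\Sigma^{-(m+1)}L,Y)=H^mY(L)=0$, so the transition maps in the tower $\ct(\Sigma N_m,Y)$ are isomorphisms and ${\lii}^{1}_m\,\ct(\Sigma N_m,Y)=0$. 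The Milnor-colimit exact sequence then gives $\ct(X,Y)\cong{\lii}^{1}_m\,\ct(\Sigma N_m,Y)=0$, which is (w2). With (w0)--(w3) verified and the aisles fixed by their homological description, uniqueness is automatic and the theorem follows.
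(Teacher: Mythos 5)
Your construction of the truncation triangle --- and with it axioms w0), w1), w3) and part~2) of the theorem --- is correct and coincides with the paper's own argument (its steps 1 through 5): compactness of the objects of $\cs$ makes $H^0$ fully faithful on coproducts of objects of $\cs$, semi-simplicity of $\Mod\cs$ makes it essentially surjective, one cones off all positive homology at once, the transition maps then vanish in positive degrees while degrees $\leq 0$ are protected by the isomorphism in degree $1$, and the Milnor colimit finishes the job.

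The genuine gap is in your proof of w2), the one place where you depart from the paper. You posit a tower $N_1\to N_2\to\cdots$ over $X\in\ct^{>0}$ in which the cone of $N_m\to N_{m+1}$ is a coproduct of cells $\Sigma^{-(m+1)}L$ and the cone $Q_m$ of $N_m\to X$ has no homology in degrees $\leq m$. Such a tower does not exist in general, for exactly the leakage reason you yourself describe in the first half: if $P_m\to Q_m$ realizes $H^{m+1}Q_m$ by cells of degree $m+1$, the next cone $Q_{m+1}=\cone(P_m\to Q_m)$ satisfies $H^{m+1}Q_{m+1}\cong\ker\bigl(H^{m+2}P_m\to H^{m+2}Q_m\bigr)$, which need not vanish; so the connectivity of the cones does \emph{not} increase, and the induction already fails at its first step. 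Concretely, let $A$ be the exterior algebra on two degree-$1$ generators with zero differential, $\ct=\cd A$, $\cs=\add A$, $X=\sigma_{>0}(A)$. Vanishing of $H^0Q_1$ and $H^1Q_1$ would force $H^1N_1\to H^1X$ to be an isomorphism and $H^2N_1\to H^2X$ to be injective; but any $N_1$ built from cells $\Sigma^{-1}A$ with $H^1N_1\cong H^1X=k^2$ has $\dim H^2N_1\geq\dim H^1N_1=2>1=\dim H^2X$ (for the coproduct $\Sigma^{-1}A^{\oplus 2}$ one gets $H^2N_1=k^4$; for iterated extensions use that each cell has Euler characteristic $0$), so $H^1Q_1\cong\ker(H^2N_1\to H^2X)\neq 0$. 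Moreover the defect is irreparable within your scheme: killing leaked degree-$1$ classes at a later stage would require attaching degree-$1$ cells, while your Mittag--Leffler step forces every cell attached after the first stage to have degree $\geq 2$ (otherwise $\ct(\Sigma\Sigma^{-1}L,Y)=H^0Y(L)$ need not vanish and the maps in the tower $\ct(\Sigma N_m,Y)$ stop being isomorphisms). In the construction of $\sigma_{\leq 0}$ the leakage is harmless because the transition maps are zero in positive degrees; in your w2) tower the leaked classes would have to be genuinely removed, and they cannot be. A telling symptom: your w2) argument nowhere uses the generating half of hypothesis a), yet w2) is false without it, since a nonzero object with vanishing $H^*$ would lie in $\ct^{>0}\cap\ct^{\leq 0}$ and its identity map would contradict w2).

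The repair is the paper's argument, which needs nothing beyond what you already proved. Given $X\in\ct$ and $Y\in\ct^{\leq 0}$, every map from the cells $P_n$ attached in your construction of $\sigma_{\leq 0}(X)$ to $Y$ vanishes, because those cells are $\Sigma^{-q}L$ with $q\geq 1$ and $\ct(\Sigma^{-q}L,Y)=H^qY(L)=0$; hence any morphism $X\to Y$ factors successively through the $Y_n$, and a compatible system of maps $Y_n\to Y$ extends over the Milnor colimit, so it factors through $X\to\sigma_{\leq 0}(X)$. If now $X\in\ct^{>0}$, your own computation gives $H^p(\sigma_{\leq 0}(X))=0$ for all $p$, whence $\sigma_{\leq 0}(X)=0$ by compact generation, and therefore $\ct(X,Y)=0$.
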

\begin{proof}
Let $\Sum(\cs)$ be the closure under small coproducts of $\cs$ in $\ct$.

{\em 1st step: The functor $H^0: \Sum(\cs) \to \Mod\cs$ is an equivalence.} Indeed, this functor is fully faithful because the objects of $\cs$ are compact in $\ct$. It is an equivalence because $\Mod\cs$ is semi-simple and $\cs$ stable under direct factors.


{\em 2nd step: For each object $X$ of $\ct$ and each integer $m$, there is a morphism $V_m(X) \to X$ such that $V_m(X)$ belongs to $\Sigma^{-m} \Sum(\cs)$
and the induced map
\[
H^m(V_m(X)) \to H^mX
\]
is an isomorphism.} Indeed, by the first step, the module $H^mX$ is isomorphic to $H^m( V_m(X))$ for some $V_m(X)$ lying in $\Sigma^{-m} \Sum(\cs)$.

{\em 3rd step: For each object $X$ of $\ct$, there is a triangle
\[
V(X) \to X \to C(X) \to \Sigma V(X)
\]
such that $V(X)$ is a sum of objects $\Sigma^p L$, where $L\in\cs$ and $p<0$,
the map
\[
H^pX \to H^p(C(X))
\]
is bijective for all $p\leq 0$, and the map
\[
H^p(V(X)) \to H^pX
\]
is surjective for all $p>1$.} Indeed, we define
\[
V(X) = \coprod_{m>0} V_m(X)
\]
and $C(X)$ to be the cone over the natural morphism $V(X) \to X$. Then we obtain the claim because the functors $H^p$ commute with coproducts,
the $H^pM$ vanishes for all $M\in\cs$ and all $p<0$ and the morphism $V(X) \to X$ induces an isomorphism
\[
H^1(V(X)) \to H^1X.
\]

{\em 4th step: For each object $X$ of $\ct$, there is a triangle
\[
\sigma_{>0}(X) \to X \to \sigma_{\leq 0} X \to \Sigma \sigma_{>0}(X)
\]
such that $\sigma_{>0}(X)$ lies in $\ct^{>0}$ and  $\sigma_{\leq 0} X$
lies in $\ct^{\leq 0}$.}
We iterate the construction of the third step to obtain a direct system
\[
X \to C(X) \to C^2(X) \to \cdots \to C^p(X) \to \cdots
\]
and define
\[
\sigma_{\leq 0}(X) = \op{Mcolim} C^p(X).
\]
We define $\sigma_{>0}(X)$ by the above triangle. The compactness of the objects of $\cs$ in $\ct$ implies that each functor $H^n$, $n\in\Z$, takes
Milnor colimits to colimits in $\Mod\cs$. Let us show that $\sigma_{\leq 0}(X)$ belongs to $\ct^{\leq 0}$. Indeed, for $n>0$, by construction, the morphisms
$C^p(X) \to C^{p+1}(X)$ induce the zero map in $H^n$. Thus, the module
\[
H^n(\sigma_{\leq 0}(X)) = H^n(\op{Mcolim} C^p(X)) = \colim H^n(C^p(X))
\]
vanishes for $n>0$. Let us show that $\sigma_{>0}(X)$ belongs to $\ct^{>0}$. Indeed,
by induction on $p$, we see that the object $K_p(X)$ defined by the triangle
\[
K_p(X) \to X \to C^p(X) \to \Sigma K_p(X)
\]
belongs to  $\ct^{>0}$. By considering the exact sequence
\[
H^{n-1}X \to H^{n-1}(C^p(X)) \to H^n(K_p(X)) \to H^nX \to H^n(C^p(X))
\]
we see that for each $n\leq 0$, the morphism
\[
H^{n-1}X \to H^{n-1}(C^p(X))
\]
is surjective and the morphism
\[
H^nX \to H^n(C^p(X))
\]
is injective. By passing to the colimit over $p$, we
obtain that for each $n\leq 0$, the morphism
\[
H^{n-1}X \to H^{n-1}(\sigma_{\geq 0}(X))
\]
is surjective and the morphism
\[
H^nX \to H^n(\sigma_{\geq 0}(X))
\]
is injective. By the exact sequence
\[
H^{n-1}X \to H^{n-1}(\sigma_{\geq 0}(X)) \to H^n(\sigma_{>0}(X)) \to H^nX \to H^n(\sigma_{\geq 0}(X))
\]
associated with the truncation triangle, this implies that
for each $n\leq 0$, the module $H^n(\sigma_{>0}(X))$ vanishes.

{\em 5th step: For each object $X$ of $\ct$ and each $n\leq 0$, the map $H^nX \to H^n(\sigma_{\leq 0}(X))$ is an isomorphism and for $n>0$, the map $H^n(\sigma_{>0}(X)) \to H^nX$ is an
isomorphism.} Indeed, the first claim follows from the fact that
$X \to C^p(X)$ induces an isomorphism in $H^n$ for all $n\leq 0$,
which we obtain by induction from the third step. For the second claim,
we consider the exact sequence
\[
H^{n-1}X \to H^{n-1}(\sigma_{\leq 0}(X)) \to H^n(\sigma_{>0}(X)) \to H^nX \to H^n(\sigma_{\leq 0}(X)).
\]
For $n=1$, the first map is an isomorphism and the last term
vanishes; for $n\geq 2$, the second and the last term vanish.

{\em 6th step: If $X$ is an object of $\ct$ and $Y$ an object of $\ct^{\leq 0}$,
each morphism $X\to Y$ factors through $X \to \sigma_{\leq 0}(X)$.} Indeed,
since $V(X)$ is a coproduct of objects $\Sigma^{-m}L$, $m>0$, $L\in\cs$,
by the triangle
\[
V(X) \to X \to C(X) \to \Sigma V(X)\ko
\]
the given morphism factors through $C(X)$. By induction, one constructs
a compatible system of factorizations
\[
\xymatrix{ X \ar[r] & C^p(X) \ar[r]^-{f_p} & Y.}
\]
This system lifts to a factorization $X \to \op{Mcolim}(C^p(X)) \to Y$, which
proves the claim since $\sigma_{\geq 0}(X) = \op{Mcolim}(C^p(X))$.

{\em 7th step: For $X\in \ct^{>0}$ and $Y\in \ct_{\leq 0}$, we have
$\ct(X,Y)=0$.} Indeed, let $f: X \to Y$ be a morphism. By the
6th step, it factors through $X \to \sigma_{\leq 0}(X)$.
We claim that $Z=\sigma_{\leq 0}(X)$ vanishes. Indeed, by
the 4th step, we have $H^nZ=0$ for $n>0$ and by the 5th
step, we have $H^nZ=0$ for $n\leq 0$ since $H^nX$ vanishes
for $n\leq 0$.

{\em 8th step: the conclusion.} Axioms w0) and w1) are clear, axiom
w2) has been shown in the 7th step and axiom w3) in the 4th step.
Claim b) has been shown in the 5th step.
\end{proof}

Although the assignment $X\mapsto\sigma_{\leq 0}X$ in part~2) of
Theorem~\ref{canonical weight structure} is not uniquely defined up
to isomorphism and it is not functorial, we have the following
useful result:

\begin{lemma}\label{good properties of sigma}
In the situation of Theorem~\ref{canonical weight structure}, we have:
\begin{itemize}
\item[1)] $\sigma_{\leq 0}(X\oplus Y)=\sigma_{\leq 0}(X)\oplus\sigma_{\leq 0}(Y)$,
\item[2)] $\sigma_{\leq 0}(\Sigma^{p}X)=\Sigma^p\sigma_{\leq 0}(X)$.
\end{itemize}
\end{lemma}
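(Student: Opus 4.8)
The plan is to prove both statements by reducing them to the characterization of $\ct^{\leq 0}$ and $\ct^{>0}$ by vanishing of homology, which is the content of part~1) of Theorem~\ref{canonical weight structure}. The key observation is that although the truncation objects $\sigma_{>0}(X)$ and $\sigma_{\leq 0}(X)$ are only well-defined up to non-unique isomorphism, the \emph{pair} of subcategories $(\ct^{>0},\ct^{\leq 0})$ is canonical, and any truncation triangle is characterized by the membership conditions together with the vanishing $\ct(\ct^{>0},\ct^{\leq 0})=0$ from axiom~w2). So I would first isolate the following uniqueness principle: if $A \to X \to B \to \Sigma A$ is a triangle with $A\in\ct^{>0}$ and $B\in\ct^{\leq 0}$, then $A$ and $B$ are determined up to (non-unique) isomorphism compatible with the maps to and from $X$. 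This follows from w2) by the standard argument that any two such triangles over the same $X$ admit a morphism between them which is an isomorphism on the outer terms.

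For part~1), I would take truncation triangles $\sigma_{>0}(X) \to X \to \sigma_{\leq 0}(X) \to \Sigma\sigma_{>0}(X)$ and $\sigma_{>0}(Y) \to Y \to \sigma_{\leq 0}(Y) \to \Sigma\sigma_{>0}(Y)$ and form their direct sum, which is again a triangle
\[
\sigma_{>0}(X)\oplus\sigma_{>0}(Y) \to X\oplus Y \to \sigma_{\leq 0}(X)\oplus\sigma_{\leq 0}(Y) \to \Sigma(\sigma_{>0}(X)\oplus\sigma_{>0}(Y)).
\]
Since $\ct^{>0}$ and $\ct^{\leq 0}$ are additive subcategories, the outer terms lie in $\ct^{>0}$ and $\ct^{\leq 0}$ respectively, so this is a valid truncation triangle for $X\oplus Y$. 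By the uniqueness principle above (equivalently, because any such triangle serves as a truncation triangle), we may \emph{take} this as our choice of $\sigma_{\leq 0}(X\oplus Y)$, giving the desired equality. For part~2), I would apply $\Sigma^p$ to a truncation triangle for $X$, obtaining the triangle
\[
\Sigma^p\sigma_{>0}(X) \to \Sigma^pX \to \Sigma^p\sigma_{\leq 0}(X) \to \Sigma^{p+1}\sigma_{>0}(X).
\]
Here axiom~w1) guarantees that $\Sigma^p\sigma_{\leq 0}(X)$ still lies in $\ct^{\leq 0}$ and $\Sigma^p\sigma_{>0}(X)$ still lies in $\ct^{>0}$ (the two halves are stable under $\Sigma$ and $\Sigma^{-1}$ respectively, hence under all integral shifts), so this is again a legitimate truncation triangle, and we take $\Sigma^p\sigma_{\leq 0}(X)$ as our value of $\sigma_{\leq 0}(\Sigma^pX)$.

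The subtlety to handle carefully is the precise meaning of the claimed equalities: since $\sigma_{\leq 0}$ is not a functor but merely a choice of truncation object for each $X$, the statements should be read as asserting that the right-hand sides \emph{are admissible choices} for the left-hand sides, i.e.\ that they fit into truncation triangles with the correct membership properties. Once this reading is fixed, the proofs are immediate from additivity of the subcategories (w0 and the fact that they are additive) and stability under shifts (w1), together with the fact that direct sums and shifts of triangles are triangles. I expect no real obstacle here; the only point requiring a word of care is that the assertions are not about a canonically-defined operation but about compatibility of the permitted choices, which I would state explicitly at the outset.
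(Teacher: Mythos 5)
The paper states this lemma without any proof, so your argument has to stand on its own; it does not quite do so, for two reasons. First, the ``uniqueness principle'' on which you found the whole discussion is \emph{false} for weight structures, and the paper says so explicitly in the sentence immediately preceding the lemma (``the assignment $X\mapsto \sigma_{\leq 0}X$ \ldots is not uniquely defined up to isomorphism and it is not functorial''). The ``standard argument'' you invoke is the $t$-structure argument: there, the comparison map between two truncation triangles is unique and invertible because one also has $\Hom(\cd^{\leq 0},\Sigma^{-1}\cd^{\geq 1})=0$. For a weight structure the required vanishing would be $\ct(\ct^{>0},\Sigma^{-1}\ct^{\leq 0})=0$, and this fails: axiom w1) makes $\ct^{\leq 0}$ stable under $\Sigma$, not under $\Sigma^{-1}$, so w2) says nothing about $\Sigma^{-1}\ct^{\leq 0}$. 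Concretely, for $\ct=\cd(k)$, $\cs=\add k$, the distinguished triangle
\[
\Sigma^{-1}k\ra 0\ra k\iso \Sigma\Sigma^{-1}k
\]
is a truncation triangle of the zero object with non-zero outer terms. Your proofs of 1) and 2) can survive without the principle (your parenthetical ``admissible choice'' reading is the only tenable one), but this example also shows that membership in $(\ct^{>0},\ct^{\leq 0})$ alone is a weaker property than what the paper needs: the lemma is applied to the triangles of part 2) of Theorem~\ref{canonical weight structure}, so a complete proof must also check that your triangles induce isomorphisms in $H^q$ in the appropriate ranges. For 1) this is immediate ($H^q$ is additive), but it has to be said, and it cannot be deduced from membership via your uniqueness principle.

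The genuine gap is in part 2). Your claim that w1) makes the two halves stable ``under all integral shifts'' is wrong: stability under $\Sigma$ does not give stability under $\Sigma^{-1}$. In the homological description of Theorem~\ref{canonical weight structure} one has $H^q(\Sigma^p\sigma_{\leq 0}(X))=H^{q+p}(\sigma_{\leq 0}(X))$, which vanishes for $q>-p$ but in general not for all $q>0$; thus $\Sigma^p\sigma_{\leq 0}(X)\in\ct^{\leq 0}$ only when $p\geq 0$, and dually $\Sigma^p\sigma_{>0}(X)\in\ct^{>0}$ only when $p\leq 0$. Hence for $p\neq 0$ your shifted triangle is, in general, \emph{not} a truncation triangle for the weight structure $(\ct^{>0},\ct^{\leq 0})$ itself, and no argument can make it one: the two sides of 2) have different homology if both $\sigma_{\leq 0}$'s are read as truncations for the same weight structure. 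The statement only makes sense (and is only used, e.g.\ in the proof of Proposition~\ref{the subcategory generated by the simple}, where $\Sigma^{-p}\sigma_{\leq 0}(A_i)$ serves as a truncation of $\Sigma^{-p}A_i$ ``at level $p$'') with re-indexed truncations: applying $\Sigma^p$ to a truncation triangle of $X$ exhibits $\Sigma^p\sigma_{\leq 0}(X)$ as an admissible choice of truncation of $\Sigma^pX$ with respect to the \emph{shifted} weight structure $(\Sigma^p\ct^{>0},\Sigma^p\ct^{\leq 0})$, i.e.\ the one whose right half consists of objects with $H^q=0$ for $q>-p$. Once the statement is read this way, your shift argument becomes correct and essentially trivial; as written, with both subcategories fixed, the verification you give is false.
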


\section{Positive dg algebras}

\begin{corollary}\label{canonical weight structure for positive dg algebra}
Let $k$ be a commutative associative ring with unit. Let $\ca$ be a small dg $k$-linear category such that:
\begin{itemize}
\item[a)] $H^p\ca$ vanishes for $p<0$,
\item[b)] $\Mod H^0(\ca)$ is a semisimple abelian category.
\end{itemize}
Then we have:
\begin{itemize}
\item[1)] There exists a weight structure $w=((\cd\ca)^{w>0},(\cd\ca)^{w\leq 0})$ on $\cd\ca$ such that $(\cd\ca)^{w>0}$ is formed by those modules $X$ such that $H^pX=0$ for $p\leq 0$ and $(\cd\ca)^{w\leq 0}$ is formed by those modules $X$ such that $H^pX=0$ for $p>0$.
\item[2)] For each module $X$ there exists a truncation triangle
\[\sigma_{>0}(X)\ra X\ra\sigma_{\leq 0}(X)\ra\Sigma\sigma_{>0}(X)
\]
such that the morphism $X\ra \sigma_{\leq 0}(X)$ induces an isomorphism in $H^p$ for $p\leq 0$ and the morphism $\sigma_{>0}(X)\ra X$ induces
an isomorphism in $H^p$ for $p>0$.
\end{itemize}
\end{corollary}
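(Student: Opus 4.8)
The plan is to deduce the corollary from Theorem~\ref{canonical weight structure} by choosing the right triangulated category and the right generating subcategory, and then matching up the two descriptions of the weight structure. Concretely, I would take $\ct=\cd\ca$, which has small coproducts, and let $\cs\subset\cd\ca$ be the closure under direct summands of the full additive subcategory spanned by the representable (free) dg modules $\ca(-,A)$, $A\in\ca$. By construction $\cs$ is a full additive subcategory stable under direct summands, so the whole problem reduces to verifying hypotheses a), b), c) of the theorem and then translating its conclusion.

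Hypothesis a) is the standard fact that the free modules $\ca(-,A)$ form a set of compact generators of $\cd\ca$; compactness and generation are inherited by the direct-summand closure $\cs$. For b), the Yoneda isomorphism $\cd\ca(\ca(-,A),\Sigma^{p}M)\cong H^{p}(M(A))$ gives
\[
\cd\ca(\ca(-,A),\Sigma^{p}\ca(-,B))\cong H^{p}\ca(A,B),
\]
which vanishes for $p<0$ by assumption a) of the corollary; by additivity and passage to direct summands (finite coproducts turn $\Hom$ into finite biproducts), this extends to $\ct(L,\Sigma^{p}M)=0$ for all $L,M\in\cs$ and $p<0$. The step that needs the most care is hypothesis c): the same Yoneda computation at $p=0$ identifies the full subcategory of representables in $\cs$ with the $k$-category having the objects of $\ca$ and morphism spaces $H^{0}\ca(A,B)$, so that $\cs$ is the idempotent completion of $H^{0}\ca$. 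Since a category and its idempotent completion have equivalent module categories, $\Mod\cs\simeq\Mod H^{0}\ca$, and the latter is semisimple by assumption b) of the corollary. This is where the semisimplicity hypothesis on $H^{0}\ca$ is converted into hypothesis c) of the theorem, and it is the main point to get right.

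Finally I would reconcile the two notions of homology. For $L=\ca(-,A)$ and $X\in\cd\ca$, Yoneda shows that the value at $A$ of the theorem's functor $H^{p}X=\bigl(L\mapsto\ct(L,\Sigma^{p}X)\bigr)$ is $H^{p}(X(A))$, i.e.\ the $A$-component of the homology module $H^{p}X$ of the dg module $X$. Hence the object $H^{p}X\in\Mod\cs$ is zero precisely when the homology $H^{p}X$ of the dg module vanishes. Consequently the subcategories produced by Theorem~\ref{canonical weight structure} coincide with those in the statement, namely $(\cd\ca)^{w>0}=\ct^{>0}$ and $(\cd\ca)^{w\leq 0}=\ct^{\leq 0}$, and parts~1) and~2) of the corollary are exactly parts~1) and~2) of the theorem transported along this identification. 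I expect no further obstacle once the identification $\Mod\cs\simeq\Mod H^{0}\ca$ is in place.
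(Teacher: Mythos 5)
Your proposal is correct and follows essentially the same route as the paper: apply Theorem~\ref{canonical weight structure} with $\ct=\cd\ca$ and $\cs$ the summand closure of finite sums of representables, use the fact (via Yoneda, cited to \cite{Keller1994a} in the paper) that these compactly generate $\cd\ca$ and have no negative-degree morphisms, and identify $\Mod\cs\simeq\Mod H^0\ca$ by restriction of scalars to convert semisimplicity of $\Mod H^0\ca$ into hypothesis~c). Your explicit reconciliation of the theorem's functor $H^pX$ with the ordinary homology of the dg module is left implicit in the paper, but it is the same identification.
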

\begin{proof}
We apply Theorem~\ref{canonical weight structure} by taking $\ct=\cd\ca$ and $\cs$ to be the full subcategory of $\cd\ca$ formed by the direct summands of finite direct sums of modules of the form $A^\we=\ca(?,A)$ where $A$ is an object of $\ca$. Thanks to \cite{Keller1994a} we know that $\cd$ is compactly generated by $\cs$ and that condition~a) implies $\Hom_{\cd\ca}(L,\Sigma^pM)=0$ for all $L$ and $M$ in $\cs$ and all integers $p<0$. After restricting scalars along the functor $H^0\ca\ra\cs$ we get an equivalence
\[\Mod H^0(\ca)\arr{\sim}\Mod \cs.
\]
Thus, condition~b) implies that $\Mod \cs$ is semisimple.
\end{proof}

\begin{nonexample}
If $H^0A$ is not semisimple we do not have a triangle as the one in
part~2) of Corollary~\ref{canonical weight structure for positive dg
algebra}. We can take, for example, the algebra of dual numbers
$A=k[\varepsilon]$ with $\varepsilon^2=0$ over field $k$ and
consider the complex $M$ equal to the cone over the map $\eps: A \to
A$. Let $S$ be the simple $A$-module. If there was a triangle
\[
\sigma_{\geq 0}(M) \to M \to \sigma_{<0}(M) \to \Sigma \sigma_{\geq
0}(M),
\]
the object $\sigma_{\geq 0}(M)$ would have to be isomorphic to $S$
and the object $\sigma_{<0}(M)$ to $\Sigma S$ (because the homology
of $M$ is concentrated in degrees $0$ and $-1$ and isomorphic to $S$
in both degrees). Then the connecting morphism
\[
\sigma_{<0}(M) \to \Sigma \sigma_{\geq 0}(M)
\]
would be a morphism $\Sigma S \to \Sigma S$ and thus would have to
be $0$ or an isomorphism. In the first case, we find that $M$ is
decomposable, a contradiction, and in the second case, we find that
$M$ is a zero object, a contradiction as well.
\end{nonexample}

\begin{notation}\label{notation canonical weight structure}
In analogy with the case of $t$-structures, we say that the weight structure of the Corollary~\ref{canonical weight structure for positive dg algebra} is the {\em canonical weight structure}. If $\ca$ is in fact a dg algebra $A$, we write $S_{A}=\sigma_{\leq 0}A$.
\end{notation}

\begin{lemma}\label{H0 fully faithful}
Let $A$ be an arbitrary dg algebra. If $M\in\cd A$ and $P$ is a direct summand of a small coproduct of copies of $A$, then the morphism of $k$-modules induced by $\H0$
\[\Hom_{\cd A}(P,M)\ra\Hom_{\H0A}(\H0P,\H0M)
\]
is an isomorphism.
\end{lemma}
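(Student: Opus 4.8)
The plan is to realize the map in the statement as the component at $P$ of a natural transformation of functors on $\cd A$, and then to reduce to the free rank-one case $P=A$ by purely formal arguments. Fix $M$ in $\cd A$ and consider the two contravariant functors
\[
\Phi=\Hom_{\cd A}(?,M)\ko\qquad \Psi=\Hom_{\H0 A}(\H0(?),\H0 M)
\]
from $\cd A$ to $\Mod k$, where we use that $\H0 N$ is naturally an $\H0 A$-module for every $N$ and that $\H0 f$ is $\H0 A$-linear for every morphism $f$ of $\cd A$ (since $\Hs$ is a graded $\Hs A$-linear homological functor). The assignment $\eta_N(f)=\H0 f$ defines a natural transformation $\eta\colon \Phi\Rightarrow\Psi$ whose component at $P$ is precisely the map of the lemma, so it suffices to prove that $\eta_P$ is an isomorphism.

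First I would treat the base case $P=A$. The Yoneda-type identifications $\Hom_{\cd A}(A,M)\cong \H0 M$ (evaluation of a morphism on the cycle $1_A$, using that $A$ is its own cofibrant resolution) and $\Hom_{\H0 A}(\H0 A,\H0 M)\cong \H0 M$ (evaluation on $1_{\H0 A}$) make both sides canonically isomorphic to $\H0 M$. Tracing through the definitions, $\eta_A$ becomes the identity of $\H0 M$, because a morphism $f$ corresponding to the class $[f(1_A)]$ induces the map $\H0 f$ sending $[1_A]$ to $[f(1_A)]$. This step is the only place where the actual structure of $A$ enters.

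Next I would pass to a small coproduct $F=\coprod_{i\in I}A$. Both functors turn this coproduct into a product: on the one hand $\Hom_{\cd A}(F,M)\cong\prod_{i}\Hom_{\cd A}(A,M)$ by the universal property of the coproduct in $\cd A$; on the other hand $\H0 F\cong\coprod_{i}\H0 A=(\H0 A)^{(I)}$, since a coproduct in $\cd A$ is realized by a direct sum of dg modules and $\H0$ commutes with direct sums, whence $\Hom_{\H0 A}(\H0 F,\H0 M)\cong\prod_{i}\Hom_{\H0 A}(\H0 A,\H0 M)$. Under these identifications, and by naturality of $\eta$ with respect to the structural inclusions $A\to F$, the map $\eta_F$ becomes $\prod_i\eta_A$, hence an isomorphism by the base case.

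Finally, since $P$ is a direct summand of such an $F$, there are morphisms $i\colon P\to F$ and $r\colon F\to P$ in $\cd A$ with $r\circ i=\id_P$. Using only the naturality squares of $\eta$ for $i$ and $r$ together with the invertibility of $\eta_F$, one checks that $\Phi(i)\circ\eta_F^{-1}\circ\Psi(r)$ is a two-sided inverse of $\eta_P$; this is the general fact that a retract of an isomorphism through a natural transformation is again an isomorphism. Hence $\eta_P$ is an isomorphism, as required. The main obstacle is really only the base case: once $\eta_A$ is identified with the identity of $\H0 M$, the coproduct and retract steps are formal.
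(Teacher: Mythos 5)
Your proof is correct and takes essentially the same route as the paper's: the paper's one-line argument observes that the full subcategory of objects $P$ satisfying the assertion contains $A$ and is closed under small coproducts and direct summands, which is exactly your base case, coproduct step, and retract step. You have simply made explicit the details the paper leaves to the reader (the identification of $\eta_A$ with the identity of $\H0 M$ via evaluation at $1_A$, the fact that both functors turn coproducts into products, and the formal argument that a retract of an isomorphism under a natural transformation is an isomorphism).
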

\begin{proof}
The full subcategory of $\cd A$ formed by the objects $P$ satisfying the assertion contains $A$ and is closed under small coproducts and direct summands.
\end{proof}

\begin{lemma}\label{indecomposable decomposition of A}
Let $A$ be a dg algebra such that in $\Mod H^0(A)$, the module
$H^0A$ admits a finite decomposition into indecomposables (\eg
$H^0A$ is semisimple). There exists a decomposition into
indecomposables $A=\bigoplus_{i=1}^{r}A_{i}$ of $A$ in $\cd A$ such
that $\H0A=\bigoplus_{i=1}^{r}\H0(A_{i})$ is a decomposition into
indecomposables of $\H0A$ in $\Mod\H0(A)$.
\end{lemma}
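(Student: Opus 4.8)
The plan is to transport the given module decomposition to a decomposition of $A$ itself by comparing endomorphism rings and splitting idempotents. First I would apply Lemma~\ref{H0 fully faithful} with $P=M=A$; since the comparison map is induced by the functor $\H0$, it respects composition, so the resulting $k$-module isomorphism
\[
\H0 : \End_{\cd A}(A) \longrightarrow \End_{\H0 A}(\H0 A)
\]
is in fact an isomorphism of rings. Next I would invoke the idempotent completeness of $\cd A$: a triangulated category with small (in particular countable) coproducts splits idempotents, so direct summands of $A$ in $\cd A$ correspond to idempotents of $\End_{\cd A}(A)$, and decompositions of $A$ into indecomposables correspond to complete sets of orthogonal primitive idempotents.

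The key step is then the transport of idempotents along the ring isomorphism $\H0$. The hypothesis provides a finite decomposition $\H0 A=\bigoplus_{i=1}^{r}M_i$ into indecomposables, equivalently a complete set $\bar e_1,\dots,\bar e_r$ of orthogonal primitive idempotents of $\End_{\H0 A}(\H0 A)$ with $M_i=\bar e_i(\H0 A)$. Because $\H0$ is a ring isomorphism, its inverse carries these to a complete set $e_1,\dots,e_r$ of orthogonal idempotents of $\End_{\cd A}(A)$, each $e_i$ again primitive (idempotency, orthogonality, the relation $\sum e_i=\id_A$, and primitivity are all ring-theoretic and hence preserved under isomorphism). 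Splitting the $e_i$ yields a decomposition $A=\bigoplus_{i=1}^{r}A_i$ in $\cd A$, and each $A_i$ is indecomposable since $\End_{\cd A}(A_i)\cong e_i\,\End_{\cd A}(A)\,e_i$ has no nontrivial idempotents, which is exactly the primitivity of $e_i$.

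Finally I would check that this decomposition is the one demanded by the statement. Applying the functor $\H0$ to the splitting of $e_i$ gives the splitting of $\H0 e_i=\bar e_i$, so $\H0(A_i)=\bar e_i(\H0 A)=M_i$, which is indecomposable; and since $\H0$ is additive, $\H0 A=\bigoplus_{i=1}^{r}\H0(A_i)$ is the required decomposition into indecomposables of $\H0 A$ in $\Mod\H0(A)$. I do not anticipate a genuine obstacle here: the usual difficulty in such statements is lifting idempotents, but since Lemma~\ref{H0 fully faithful} makes $\H0$ an actual isomorphism rather than merely a surjection, the lift is automatic and requires no completeness or nilpotence hypothesis; the only external ingredient is the idempotent completeness of $\cd A$.
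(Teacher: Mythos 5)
Your proof is correct and follows essentially the same route as the paper: apply Lemma~\ref{H0 fully faithful} with $P=M=A$ to get the ring isomorphism $\H0:\End_{\cd A}(A)\arr{\sim}\End_{\H0A}(\H0A)$, transport the complete family of primitive orthogonal idempotents backwards along it, and split the resulting idempotents in $\cd A$. The only difference is that you spell out the verification that $\H0(A_i)$ recovers the original summands and that primitivity transfers, details the paper leaves implicit.
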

\begin{proof}
A decomposition of $\H0A$ into indecomposables in the category of $\H0A$-modules gives us a complete family $\{e'_{1},\dots, e'_{r}\}$ of primitive orthogonal idempotents of the ring $\op{End}_{\H0A}(\H0A)$.
Now, by using the ring isomorphism
\[\H0:\op{End}_{\cd A}(A)\arr{\sim}\op{End}_{\H0A}(\H0A)
\]
we find a complete family $\{e_{1},\dots, e_{r}\}$ of primitive orthogonal idempotents of the ring $\op{End}_{\cd A}(A)$.  Since idempotents split in $\cd A$, each $e_{i}$ has an image $A_{i}$ in $\cd A$ and we obtain that $A=\bigoplus_{i=1}^{r}A_{i}$ is a decomposition of $A$ into indecomposables in $\cd A$.
\end{proof}

\begin{proposition}\label{the subcategory generated by the simple}
Let $A$ be a dg algebra with homology concentrated in non negative degrees and such that $H^0A$ is a semi-simple ring.
\begin{itemize}
\item[1)] Let $X$ be an object of $\cd A$ with bounded homology and such that each $H^nX\ko n\in\Z$, is a finitely generated $H^0A$-module.
If $p\in\Z$ is an integer such that $H^nX=0$ for $n>p$ and $H^pX\neq 0$, then $X$ belongs to the smallest full subcategory $\op{susp}^{\oplus}(\Sigma^{-p}S_{A})$ of $\cd A$ containing $\Sigma^{-p}S_{A}$ and closed under extensions, positive shifts and direct summands.
\item[2)] Assume that each $H^nA\ko n\in\Z$, is a finitely generated $H^0A$-module. Then if $M\in\per A$, for any truncation triangle
\[\sigma_{>p}(M)\ra M\ra \sigma_{\leq p}(M)\ra \Sigma\sigma_{>p}(M)
\]
we have $\sigma_{\leq p}M\in\op{susp}^{\oplus}(\Sigma^{-p}S_{A})$.
\end{itemize}
\end{proposition}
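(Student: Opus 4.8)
The plan is to reduce part~2) to part~1), and to prove part~1) by induction on the total length of the homology, using as building blocks the ``simple'' objects obtained by truncating the indecomposable summands of $A$. First I set up these blocks. By Lemma~\ref{indecomposable decomposition of A} I may write $A=\bigoplus_{i=1}^{r}A_{i}$ with each $H^{0}A_{i}$ a simple $H^{0}A$-module, and since $\sigma_{\leq 0}$ is additive (Lemma~\ref{good properties of sigma}) I obtain $S_{A}=\bigoplus_{i=1}^{r}S_{i}$ with $S_{i}=\sigma_{\leq 0}(A_{i})$. Each $S_{i}$ is a direct summand of $S_{A}$ whose homology is $H^{0}A_{i}$ concentrated in degree~$0$, and the $H^{0}A_{i}$ exhaust the simple $H^{0}A$-modules; hence every finitely generated (equivalently, finite length) $H^{0}A$-module is isomorphic to some $\bigoplus_{i}(H^{0}A_{i})^{n_{i}}$, realized in degree~$0$ by $\bigoplus_{i}S_{i}^{n_{i}}\in\add S_{A}$.

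The crux is the following surjectivity statement: \emph{for every $Z\in\cd A$ with $H^{n}Z=0$ for all $n>0$ and every $Y\in\add S_{A}$, the map $\Hom_{\cd A}(Y,Z)\to\Hom_{H^{0}A}(H^{0}Y,H^{0}Z)$ induced by $H^{0}$ is surjective.} By additivity it suffices to treat $Y=S_{A}$. Applying $\Hom_{\cd A}(-,Z)$ to the defining triangle $\sigma_{>0}(A)\to A\to S_{A}\to\Sigma\sigma_{>0}(A)$ gives the exact sequence $\Hom(S_{A},Z)\to\Hom(A,Z)\to\Hom(\sigma_{>0}(A),Z)$. Here $\Hom(A,Z)\cong H^{0}Z$ by Lemma~\ref{H0 fully faithful}, while $\Hom(\sigma_{>0}(A),Z)=0$ by axiom~w2) of the canonical weight structure (Corollary~\ref{canonical weight structure for positive dg algebra}), since $\sigma_{>0}(A)\in(\cd A)^{w>0}$ and $Z\in(\cd A)^{w\leq 0}$. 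Thus $\Hom(S_{A},Z)\to H^{0}Z$ is onto, and as $H^{0}(A\to S_{A})$ is an isomorphism this map is, up to that isomorphism, the natural one. This step is where the weight structure really enters, and I expect it to be the heart of the matter; everything else is formal.

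Granting this, I prove part~1) by induction on $\ell(X)=\sum_{n}\mathrm{length}_{H^{0}A}(H^{n}X)$, which is finite by the hypotheses. Given $X$ with top nonzero homology in degree~$p$, choose $Y=\Sigma^{-p}\bigoplus_{i}S_{i}^{n_{i}}\in\add(\Sigma^{-p}S_{A})$ with an isomorphism $H^{p}Y\cong H^{p}X$. Applying the surjectivity statement to $Z=\Sigma^{p}X$ (which has homology in degrees $\leq 0$) lifts this isomorphism to a morphism $f\colon Y\to X$ with $H^{p}f$ an isomorphism. The long exact homology sequence of the triangle $Y\xrightarrow{f}X\to C\to\Sigma Y$ then yields $H^{n}C\cong H^{n}X$ for $n\leq p-1$ and $H^{n}C=0$ for $n\geq p$, so $\ell(C)<\ell(X)$. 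If $C=0$ then $X\cong Y\in\add(\Sigma^{-p}S_{A})$; otherwise the top degree $p'$ of $C$ satisfies $p'\leq p-1$, so by induction $C\in\op{susp}^{\oplus}(\Sigma^{-p'}S_{A})\subseteq\op{susp}^{\oplus}(\Sigma^{-p}S_{A})$, the inclusion holding because $\Sigma^{-p'}S_{A}$ is a positive shift of $\Sigma^{-p}S_{A}$. Since the triangle exhibits $X$ as an extension of $C$ by $Y$, both lying in $\op{susp}^{\oplus}(\Sigma^{-p}S_{A})$, closure under extensions gives $X\in\op{susp}^{\oplus}(\Sigma^{-p}S_{A})$.

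Finally, for part~2) I observe that for $M\in\per A=\op{thick}_{\cd A}(A)$ the homology is bounded below (as $A$ lives in degrees $\geq 0$ and $M$ is a finite iteration of cones, shifts and summands of $A$) and each $H^{n}M$ is finitely generated over $H^{0}A$ (finite length is preserved along the homology long exact sequences and under summands, using that each $H^{n}A$ is finitely generated). For any truncation triangle the object $\sigma_{\leq p}(M)$ satisfies $H^{n}(\sigma_{\leq p}M)\cong H^{n}M$ for $n\leq p$ and $H^{n}(\sigma_{\leq p}M)=0$ for $n>p$, hence has bounded, degreewise finitely generated homology concentrated in degrees $\leq p$. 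Applying part~1) to $\sigma_{\leq p}(M)$ with its actual top degree $p'\leq p$ gives $\sigma_{\leq p}(M)\in\op{susp}^{\oplus}(\Sigma^{-p'}S_{A})\subseteq\op{susp}^{\oplus}(\Sigma^{-p}S_{A})$, as required.
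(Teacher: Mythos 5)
Your proposal is correct and takes essentially the same approach as the paper: your key surjectivity statement is just a repackaging of the paper's step in which a map $\bigoplus_{i}\Sigma^{-p}A_{i}^{n_{i}}\to X$ (produced via Lemmas~\ref{H0 fully faithful} and \ref{indecomposable decomposition of A}) is factored through $\bigoplus_{i}\Sigma^{-p}\sigma_{\leq 0}(A_{i})^{n_{i}}$ using the orthogonality axiom w2), and your cone-and-induct argument (induction on total length rather than on the width of the homology interval) together with the reduction of part~2) to part~1) coincides with the paper's proof. The only caveat, shared equally by the paper's own argument, is that in part~2) the identification of $H^{p}(\sigma_{\leq p}M)$ with $H^{p}M$ (equivalently, the finite generation of $H^{p}(\sigma_{\leq p}M)$) is automatic only for truncation triangles as furnished by Theorem~\ref{canonical weight structure}, not for arbitrary triangles satisfying axiom w3).
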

\begin{proof}
1) We will use induction on the width of the interval delimited by those degrees with non-vanishing homology. By Lemmas~\ref{H0 fully faithful} and \ref{indecomposable decomposition of A}, there are direct summands $A_{1}\ko\dots\ko A_{r}$ of $A$ in $\cd A$, natural numbers $n_{1}\ko\dots\ko n_{r}$, and a morphism $f: \bigoplus_{i=1}^{r}\Sigma^{-p}A_{i}^{n_{i}}\ra X$ in $\cd A$ such that $H^pf$ is an isomorphism in $\Mod H^0A$. Consider truncation triangles
\[\sigma_{>0}(A_{i})\ra A_{i}\ra\sigma_{\leq 0}(A_{i})\ra\Sigma\sigma_{>0}(A_{i}),
\]
as the ones in part~b) of Theorem~\ref{canonical weight structure}. After Lemma~\ref{good properties of sigma} we know that the objects $\sigma_{\leq 0}A_{i}$ can be taken to be direct summands of $S_{A}$ in $\cd A$. In particular, the $\Sigma^{-p}A_{i}$ are objects of $\op{susp}^{\oplus}(S_{A})$. Now notice that $X\in(\cd A)^{w\leq p}$, and so it is right orthogonal to the objects of the wing $(\cd A)^{w>p}$. Hence the morphism $f$ factors through the morphism $\bigoplus_{i=1}^{r}\Sigma^{-p}A_{i}^{n_{i}}\ra\bigoplus_{i=1}^{r}\Sigma^{-p}\sigma_{\leq 0}(A_{i})^{n_{i}}$:
\[\xymatrix{
\bigoplus_{i=1}^{r}\Sigma^{-p}\sigma_{>0}(A_{i})^{n_{i}}\ar[r]\ar[dr]_{0} & \bigoplus_{i=1}^{r}\Sigma^{-p}A_{i}^{n_{i}}\ar[r]\ar[d]^{f} & \bigoplus_{i=1}^{r}\Sigma^{-p}\sigma_{\leq 0}(A_{i})^{n_{i}}\ar[r]\ar@{.>}[dl]^{\tilde{f}} & \bigoplus_{i=1}^{r}\Sigma^{-p+1}\sigma_{>0}(A_{i})^{n_{i}} \\
& X&&
}
\]
Since $H^p(\tilde{f})$ is an isomorphism, for the mapping cone $X'$ of $\tilde{f}$ the width of the interval delimited by those degrees with non-vanishing homology is strictly smaller than that of $X$, and $H^n(X')=0$ for $n>p-1$. By induction hypothesis we get $X'\in\op{susp}^{\oplus}(\Sigma^{-p+1}S_{A})$, which implies that $X\in\op{susp}(\Sigma^{-p}S_{A})$.

2) Since $A$ has homology concentrated in non negative degrees, then $M\in\cd^+A$. Therefore, $X=\sigma_{\leq p}M$ has bounded homology. Note that the hypothesis implies that each $H^nM\ko n\in\Z$, is finitely generated as a module over $H^0A$. This implies that each $H^nX\ko n\in\Z$, is finitely generated as a module over $H^0A$. Now we can use part~1) of the proposition.
\end{proof}

\begin{corollary}\label{lifting graded simples}
Let $k$ be a commutative associative ring with unit. Let $A$ be a dg $k$-algebra such that:
\begin{itemize}
\item[a)] $H^pA$ vanishes for $p<0$,
\item[b)] $\Mod H^0(A)$ is a semisimple abelian category.
\end{itemize}
Then for each graded simple module $S$ over the graded ring $H^*A$, there is a dg $A$-module $\tilde{S}$, unique up to isomorphism in the derived category $\cd A$, such that the graded $H^*A$-module $H^*(\tilde{S})$ is isomorphic to $S$.
\end{corollary}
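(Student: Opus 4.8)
The plan is to reduce everything to the rigid objects $\sigma_{\leq 0}(A_i)$ produced by the canonical weight structure. First I would pin down the shape of a graded simple module $S$ over $H^*A$. Since $H^pA$ vanishes for $p<0$, the graded ring $H^*A$ is concentrated in degrees $\geq 0$ with degree-zero part $H^0A$. Picking any nonzero homogeneous $s\in S$, simplicity forces $S=(H^*A)s$, which is concentrated in degrees $\geq\deg(s)$; hence $S$ is bounded below, and if $p$ denotes its lowest degree, then the proper graded submodule $(H^{>0}A)\cdot S$ (concentrated in degrees $>p$) must vanish. Therefore $S$ is concentrated in the single degree $p$ and $S_p$ is a simple $H^0A$-module. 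This step is routine but uses positivity essentially.

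For existence, I note that since $\Mod H^0A$ is semisimple the regular module $H^0A$ is a finite direct sum of simples, so Lemma~\ref{indecomposable decomposition of A} yields a decomposition $A=\bigoplus_{i=1}^r A_i$ in $\cd A$ with each $H^0(A_i)$ simple, and these exhaust the simple $H^0A$-modules up to isomorphism. I set $S_i=\sigma_{\leq 0}(A_i)$; by Corollary~\ref{canonical weight structure for positive dg algebra} together with $H^{<0}A=0$, the homology of $S_i$ is concentrated in degree $0$ and equals $H^0(A_i)$, and by Lemma~\ref{good properties of sigma} the $S_i$ are the summands of $S_A=\sigma_{\leq 0}A$. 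Writing $S_p\cong H^0(A_{i_0})$, I take $\tilde S=\Sigma^{-p}S_{i_0}$. Its homology is then concentrated in degree $p$ and equal to $H^0(A_{i_0})$; since the positive part of $H^*A$ acts by zero on any module living in a single degree, $H^*(\tilde S)\cong S$ as $H^*A$-modules.

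The crux is uniqueness. Let $\tilde S'$ be another lift; applying $\Sigma^p$ I may assume $p=0$, so $\tilde S'$ has homology concentrated in degree $0$ and isomorphic to $T:=H^0(A_{i_0})$, and in particular $\tilde S'\in(\cd A)^{w\leq 0}$. By Lemma~\ref{H0 fully faithful}, applied to the summand $A_{i_0}$ of $A$, I lift the chosen isomorphism $H^0(A_{i_0})\iso H^0(\tilde S')$ to a morphism $g\colon A_{i_0}\to\tilde S'$ with $H^0(g)$ invertible. Applying $\Hom_{\cd A}(-,\tilde S')$ to the truncation triangle $\sigma_{>0}(A_{i_0})\to A_{i_0}\to S_{i_0}\to\Sigma\sigma_{>0}(A_{i_0})$ and using axiom w2)---which gives $\Hom_{\cd A}(\sigma_{>0}(A_{i_0}),\tilde S')=0$ because $\sigma_{>0}(A_{i_0})\in(\cd A)^{w>0}$ and $\tilde S'\in(\cd A)^{w\leq 0}$---I obtain a factorization $g=\phi\circ(A_{i_0}\to S_{i_0})$. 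As $A_{i_0}\to S_{i_0}$ induces an isomorphism in $H^0$, so does $\phi\colon S_{i_0}\to\tilde S'$. Since both $S_{i_0}$ and $\tilde S'$ have homology concentrated in degree $0$, the long exact homology sequence of the triangle on $\phi$ forces $\cone(\phi)$ to be acyclic, hence zero in $\cd A$; thus $\phi$ is an isomorphism and $\tilde S'\cong S_{i_0}=\tilde S$.

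I expect the uniqueness step to be the main obstacle, and within it the decisive device is to produce a comparison morphism not directly between the two lifts but by lifting along the free summand $A_{i_0}$ and pushing it through the weight truncation via w2); once $\phi$ is constructed, the acyclicity of its cone is immediate from the concentration of homology in a single degree. Everything else is bookkeeping; alternatively, one could first invoke Proposition~\ref{the subcategory generated by the simple} to place both lifts in $\op{susp}^{\oplus}(\Sigma^{-p}S_A)$ before comparing, but the factorization argument above already settles the matter self-containedly.
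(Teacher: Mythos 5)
Your proof is correct and takes essentially the same route as the paper: existence via the decomposition $A=\bigoplus_{i}A_{i}$ and the truncations $\sigma_{\leq 0}(A_{i})$, and uniqueness by lifting an isomorphism on $H^0$ through Lemma~\ref{H0 fully faithful}, factoring through $\sigma_{\leq 0}(A_{i_0})$ via the orthogonality axiom w2), and concluding from the concentration of homology in one degree --- which is precisely the argument the paper invokes when it cites the proof of part~1) of Proposition~\ref{the subcategory generated by the simple}. Your explicit treatment of the shift by the degree $p$ is a small refinement of the paper's first step, not a different method.
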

\begin{proof}
{\em First step: The graded simple modules over $H^*A$ are precisely the simple modules over $H^0A$, regarded as graded $H^*A$-modules (concentrated in degree $0$) by restricting scalars along $H^*A\ra H^0A$.} Clearly, simple $H^0A$-modules become simple graded $H^*A$-modules. Conversely, if $S$ is a graded simple $H^*A$-module, then it has to be concentrated in degree $0$. This implies that it is killed by $\bigoplus_{p>0}H^pA$. In other words, it is a (necessarily simple) $H^0A$-module.

{\em Second step: There exists a decomposition into indecomposables $A=\bigoplus_{i=1}^{r}A_{i}$ of $A$ in $\cd A$ such that $\H0A=\bigoplus_{i=1}^{r}\H0(A_{i})$ is a decomposition into simples of $\H0A$ in $\Mod\H0(A)$.} This is Lemma~\ref{indecomposable decomposition of A}.

{\em Third step: the graded $H^*A$-modules $H^*(\sigma_{\leq 0} A_{i})\ko 1\leq i\leq r$, are graded simple $H^*A$-modules, and every graded simple $H^*A$-module is of this form.} Thanks to the first step, it suffices to prove that $H^p(\sigma_{\leq 0}A_{i})=0$ for $p\neq 0$, and that with $H^0(\sigma_{\leq 0}A_{i})\ko 1\leq i\leq r$, we get all the simple $H^0A$-modules. This follows from the particular properties of the weight structure we are considering.

{\em Fourth step: if $\tilde{S}\in\cd A$ is a module such that $H^*(\tilde{S})$ is a graded $H^*A$-module isomorphic to $H^*(\sigma_{\leq 0}A_{i})$ for some $1\leq i\leq r$, then $\tilde{S}$ is isomorphic to $\sigma_{\leq 0}(A_{i})$ in $\cd A$.} Indeed, the proof of part~1 of Proposition~\ref{the subcategory generated by the simple} can be used to show that the map $\tilde{f}:\sigma_{\leq 0}(A_{i})\ra \tilde{S}$ there is an isomorphism.
\end{proof}

\begin{remark}
The result above remains valid for small dg categories $\ca$ such
that $H^p\ca=0$ for $p<0$ and $\Mod H^0(\ca)$ is semi-simple and
each simple is compact.
\end{remark}

\section{The Koszul dual}

Throughout this section $A$ will be a dg algebra with homology
concentrated in non negative degrees and such that $H^0A$ is a
semi-simple ring. Recall from Notation~\ref{notation canonical
weight structure} that $S_{A}=\sigma_{\leq 0}(A)$.

\begin{notation}
We write $B=\REnd(S_{A})$. It should be thought thought of as the
`Koszul dual' of $A$.
\end{notation}

\begin{lemma}
$B$ has homology concentrated in non positive degrees.
\end{lemma}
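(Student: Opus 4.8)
The plan is to reduce the assertion to the orthogonality axiom~w2) of the canonical weight structure on $\cd A$. First I would record that the homology of $B=\REnd(S_A)$ is computed by morphisms in the derived category,
\[
H^n(B)=\Hom_{\cd A}(S_A,\Sigma^n S_A),
\]
so that ``homology concentrated in non positive degrees'' means precisely $\Hom_{\cd A}(S_A,\Sigma^n S_A)=0$ for every $n>0$. This is the standard identification of the graded pieces of the dg endomorphism algebra.

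Since, by the standing hypotheses of this section, $A$ has homology concentrated in non negative degrees and $H^0A$ is semi-simple, conditions a) and b) of Corollary~\ref{canonical weight structure for positive dg algebra} hold for $\ca=A$; hence $\cd A$ carries the canonical weight structure $((\cd A)^{w>0},(\cd A)^{w\leq 0})$, and by Notation~\ref{notation canonical weight structure} we have $S_A=\sigma_{\leq 0}(A)\in(\cd A)^{w\leq 0}$. Next I would fix $n>0$ and, using that $\Sigma$ is an autoequivalence, rewrite
\[
H^n(B)=\Hom_{\cd A}(S_A,\Sigma^n S_A)\cong\Hom_{\cd A}(\Sigma^{-n}S_A,S_A).
\]
The point is then to place the two arguments in opposite wings. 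The object $S_A$ already lies in $(\cd A)^{w\leq 0}$. For $\Sigma^{-n}S_A$ I would compute $H^p(\Sigma^{-n}S_A)=H^{p-n}(S_A)$; since the truncation morphism $A\to\sigma_{\leq 0}(A)$ is an isomorphism in $H^p$ for $p\leq 0$ (part~2) of Corollary~\ref{canonical weight structure for positive dg algebra}) and $H^pA=0$ for $p<0$, the module $S_A$ has vanishing homology in all negative degrees. Consequently $H^p(\Sigma^{-n}S_A)=0$ for every $p\leq 0$ as soon as $n\geq 1$, so that $\Sigma^{-n}S_A\in(\cd A)^{w>0}$. Axiom~w2) of Theorem~\ref{canonical weight structure} now yields $\Hom_{\cd A}(\Sigma^{-n}S_A,S_A)=0$, which is the desired vanishing.

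The argument is short and I do not expect a serious obstacle: the only points needing care are the identification of $H^n(B)$ with $\Hom_{\cd A}(S_A,\Sigma^n S_A)$ and the homology bookkeeping that lands $\Sigma^{-n}S_A$ in the wing $(\cd A)^{w>0}$. I would emphasize that only the vanishing of the \emph{negative} homology of $S_A$ is used here, so no computation of $H^0(S_A)$ is required; it is nonetheless consistent with the fact, recorded in the proof of Corollary~\ref{lifting graded simples}, that $S_A$ has homology concentrated in degree~$0$.
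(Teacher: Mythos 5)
Your proof is correct, but it takes a genuinely different route from the paper's. The paper applies $\Hom_{\cd A}(?,\Sigma^p S_A)$ to the truncation triangle $\sigma_{>0}(A)\ra A\ra S_A\ra\Sigma\sigma_{>0}(A)$ and squeezes the group $\Hom(S_A,\Sigma^p S_A)$ in the resulting exact sequence between $\Hom(\sigma_{>0}(A),\Sigma^{p-1}S_A)$ — which vanishes by axiom w2), since $\Sigma^{p-1}S_A$ stays in the non-positive wing when $p>0$ — and $H^p(S_A)$, which vanishes because $S_A$ lies in $(\cd A)^{w\leq 0}$. You instead move the shift to the first argument, $\Hom(S_A,\Sigma^n S_A)\cong\Hom(\Sigma^{-n}S_A,S_A)$, and check via the homological description of the wings in Corollary~\ref{canonical weight structure for positive dg algebra} that $\Sigma^{-n}S_A$ lands in $(\cd A)^{w>0}$, so the vanishing becomes a single direct application of w2). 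The trade-off: your argument needs one input the paper's does not, namely that $S_A$ has no homology in negative degrees (which you correctly extract from part~2) of Corollary~\ref{canonical weight structure for positive dg algebra} together with $H^pA=0$ for $p<0$), whereas the paper only uses that $S_A$ belongs to the non-positive wing; in exchange you avoid the long exact sequence entirely. Both proofs ultimately rest on the orthogonality axiom of the same canonical weight structure, and all the steps in yours — the identification $H^n(B)=\Hom_{\cd A}(S_A,\Sigma^n S_A)$, the shift adjunction, and the homology bookkeeping placing $\Sigma^{-n}S_A$ in the positive wing — are sound.
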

\begin{proof}
We have to prove that
\[H^p\RHom(S_{A},S_{A})=\Hom_{\cd A}(S_{A},\Sigma^pS_{A})=0
\]
for $p>0$. After applying $\Hom_{\cd A}(?,\Sigma^pS_{A})$ to the triangle
\[\sigma_{>0}(A)\ra A\ra S_{A}\ra\Sigma\sigma_{>0}(A)
\]
we get the exact sequence
\[\Hom(\sigma_{>0}(A),\Sigma^{p-1}S_{A})\ra\Hom(S_{A},\Sigma^pS_{A})\ra H^p(S_{A}).
\]
Of course, $H^p(S_{A})=0$ for $p>0$. On the other hand, by definition of weight structure we have
\[\Hom(\sigma_{>0}(A),\Sigma^{p-1}S_{A})=0
\]
for $p>0$.
\end{proof}

\begin{lemma}\label{maps as lim-colim}
\begin{itemize}
\item[1)] For each $X\in\cd A$ we have $X\cong\op{Mlim}_{p\geq 0}\sigma_{\leq p}X$.
\item[2)] For every pair of objects $X$ and $Y$ of $\cd A$ we have
\[\Hom(X,Y)=\op{lim}_{q}\op{colim}_{p}\Hom(\sigma_{\leq p}X,\sigma_{\leq q}Y).
\]
\end{itemize}
\end{lemma}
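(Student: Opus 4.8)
The plan is to prove part~1) by a homological argument and then deduce part~2) from it, reducing everything to the orthogonality of the canonical weight structure of Corollary~\ref{canonical weight structure for positive dg algebra}; the only hard point will be the vanishing of a $\varprojlim^1$-term.

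\emph{Part 1.} First I would organise the truncations into an inverse system. For $p\geq p'$, applying $\Hom(-,\sigma_{\leq p'}X)$ to the truncation triangle $\sigma_{>p}X\to X\to\sigma_{\leq p}X\to\Sigma\sigma_{>p}X$ makes the two outer terms $\Hom(\sigma_{>p}X,\sigma_{\leq p'}X)$ and $\Hom(\sigma_{>p}X,\Sigma^{-1}\sigma_{\leq p'}X)$ vanish by the orthogonality of the weight structure (axiom~w2) and its shifts), so the canonical morphism $X\to\sigma_{\leq p'}X$ factors uniquely through $X\to\sigma_{\leq p}X$; this produces canonical transition maps $f_p\colon\sigma_{\leq p+1}X\to\sigma_{\leq p}X$ for which the family $g_p\colon X\to\sigma_{\leq p}X$ satisfies $f_pg_{p+1}=g_p$, hence $(\id-\sigma)(g_p)_p=0$. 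Thus $(g_p)_p$ lifts along $\op{Mlim}_p\sigma_{\leq p}X\to\prod_p\sigma_{\leq p}X$ to a morphism $\phi\colon X\to\op{Mlim}_p\sigma_{\leq p}X$. To see that $\phi$ is invertible I would compute homology: since the functors $H^n$ commute with products, the Milnor triangle yields, for each $n$, a short exact sequence
\[
0\to\varprojlim^{1}_{p}H^{n-1}(\sigma_{\leq p}X)\to H^n(\op{Mlim}_p\sigma_{\leq p}X)\to\varprojlim_{p}H^n(\sigma_{\leq p}X)\to 0.
\]
Each tower $(H^n(\sigma_{\leq p}X))_p$ is eventually constant, equal to $H^nX$ with identity transitions once $p\geq n$ and zero for $p<n$; it is therefore Mittag--Leffler with $\varprojlim=H^nX$ and $\varprojlim^1=0$, so $\phi$ induces an isomorphism on every $H^n$. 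As $\cs$ compactly generates $\cd A$, the cone of $\phi$ has vanishing homology and hence is zero, so $\phi$ is an isomorphism.

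\emph{Part 2: reduction and the colimit over $p$.} Using part~1) applied to $Y$, I would write $\Hom(X,Y)=\Hom(X,\op{Mlim}_q\sigma_{\leq q}Y)$ and apply $\Hom(X,-)$ to the Milnor triangle of $Y$. Because $\Hom(X,\prod_q\sigma_{\leq q}Y)=\prod_q\Hom(X,\sigma_{\leq q}Y)$, the long exact sequence collapses to
\[
0\to\varprojlim^{1}_{q}\Hom(\Sigma X,\sigma_{\leq q}Y)\to\Hom(X,Y)\to\varprojlim_{q}\Hom(X,\sigma_{\leq q}Y)\to 0.
\]
For each fixed $q$ I would identify $\Hom(X,\sigma_{\leq q}Y)$ with $\varinjlim_p\Hom(\sigma_{\leq p}X,\sigma_{\leq q}Y)$: applying $\Hom(-,\sigma_{\leq q}Y)$ to the truncation triangle for $X$ and using $\Hom(\sigma_{>p}X,\sigma_{\leq q}Y)=0$ and $\Hom(\sigma_{>p}X,\Sigma^{-1}\sigma_{\leq q}Y)=0$ for $p\geq q+1$ (again orthogonality), the canonical map $\Hom(\sigma_{\leq p}X,\sigma_{\leq q}Y)\to\Hom(X,\sigma_{\leq q}Y)$ is an isomorphism whenever $p\geq q+1$. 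These isomorphisms are compatible with the transition maps of the direct system, so the colimit equals $\Hom(X,\sigma_{\leq q}Y)$, and the right-hand term of the short exact sequence becomes $\varprojlim_q\varinjlim_p\Hom(\sigma_{\leq p}X,\sigma_{\leq q}Y)$. It remains to annihilate the $\varprojlim^1$-term.

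\emph{The main obstacle: vanishing of $\varprojlim^1$.} The crux is to prove that the tower $(\Hom(\Sigma X,\sigma_{\leq q}Y))_q$ is Mittag--Leffler, equivalently that $\Hom(X,Y)\to\varprojlim_q\Hom(X,\sigma_{\leq q}Y)$ is injective. The transition from level $q$ to $q-1$ is governed, via the triangle $W_q\to\sigma_{\leq q}Y\to\sigma_{\leq q-1}Y\to\Sigma W_q$ whose fibre $W_q$ has homology concentrated in degree $q$ and isomorphic to $H^qY$, by an obstruction landing in $\Hom(X,W_q)$. Using the stabilisation of the previous step to replace $\Sigma X$ by $\Sigma\sigma_{\leq q}X$, together with the semisimplicity of the heart (so that each $W_q$ is a coproduct of shifted summands of $S_A$ and the weight filtration splits in each internal degree), I expect to show that the images of the transition maps stabilise, whence $\varprojlim^1=0$. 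This is the delicate step: because the simples carry negative self-extensions ($H^{<0}B$ need not vanish), maps from $X$ into the high fibres $W_q$ are not killed by any degree gap, so one cannot argue by a crude boundedness estimate, and it is precisely here that the semisimplicity hypothesis~b) of Corollary~\ref{canonical weight structure for positive dg algebra} is used in an essential way.
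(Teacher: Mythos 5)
Your part~1) is correct, and it is essentially the paper's own argument. The paper builds the tower $\cdots\to\sigma_{\leq 1}X\to\sigma_{\leq 0}X$ by iterated truncation of the objects $\sigma_{>p}X$ and the octahedral axiom, whereas you obtain the (canonical) transition maps by factoring $X\to\sigma_{\leq p'}X$ through $X\to\sigma_{\leq p}X$ using orthogonality; both routes then conclude in the same way, namely that for fixed $n$ the tower $(H^n(\sigma_{\leq p}X))_p$ is eventually constant with value $H^nX$, so that $H^n(\op{Mlim}_p\sigma_{\leq p}X)\cong\varprojlim_p H^n(\sigma_{\leq p}X)\cong H^nX$, and compact generation finishes the proof. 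Your explicit Mittag--Leffler/$\op{lim}^1$ sequence is exactly what the paper's remark that ``almost every map $H^n(g_p)$ is an isomorphism'' is standing in for. (One harmless slip: uniqueness of the factorization needs $p>p'$ rather than $p\geq p'$, since $\Sigma\sigma_{>p}X$ only lies in $\ct^{>p-1}$; as your transition maps use consecutive indices, this does not matter.) Likewise, your identification $\op{colim}_p\Hom(\sigma_{\leq p}X,\sigma_{\leq q}Y)\cong\Hom(X,\sigma_{\leq q}Y)$ for fixed $q$ is word for word the second half of the paper's proof of part~2).

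The gap is the remaining step of part~2): the vanishing of $\op{lim}^1_q\Hom(\Sigma X,\sigma_{\leq q}Y)$, equivalently the injectivity of $\Hom(X,Y)\to\varprojlim_q\Hom(X,\sigma_{\leq q}Y)$. You do not prove it --- you write ``I expect to show'' and sketch a strategy (semisimplicity of the heart, degreewise splitting of the weight filtration) that is never turned into an actual argument for stabilisation of the images of the transition maps; as written, the proof is incomplete precisely there. It is worth comparing with the paper at this point: the paper's proof of part~2) consists of the single line $\Hom(X,Y)=\Hom(X,\op{Mlim}_q\sigma_{\leq q}Y)=\op{lim}_q\Hom(X,\sigma_{\leq q}Y)$ followed by the colimit identification, i.e.\ it silently discards exactly the $\op{lim}^1$ term you are worrying about. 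So your attempt reproduces every step the paper actually carries out and stalls at the one step the paper omits; your instinct that this is the delicate point is sound. Note that the vanishing does hold, by an easy argument, when $X\in\per A$: such an $X$ lies in the thick subcategory generated by the $\Sigma^{-n}A$ with $n\leq N$ for some $N$, and for $q'\geq q\geq N$ the object $W$ in the triangle $W\to\sigma_{\leq q'}Y\to\sigma_{\leq q}Y\to\Sigma W$ has homology concentrated in degrees $>q$, so by d\'evissage $\Hom(\Sigma X,W)=\Hom(\Sigma X,\Sigma W)=0$ and all transition maps of the tower $(\Hom(\Sigma X,\sigma_{\leq q}Y))_{q\geq N}$ are isomorphisms, whence $\op{lim}^1=0$. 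This covers the only place where the lemma is used in the paper (Proposition~\ref{fully faithful}, where both arguments are perfect); for arbitrary $X$ in $\cd A$ no such degree argument is available, and neither your sketch nor the paper's proof supplies a substitute.
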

\begin{proof}
1) Given $X\in\cd A$ we can form triangles
\[\sigma_{>0}(X)\ra X\ra\sigma_{\leq 0}(X)\ra\Sigma\sigma_{>0}(X),
\]
\[\sigma_{>1}(\sigma_{>0}X)\ra\sigma_{>0}X\ra\sigma_{\leq 1}(\sigma_{>0}X)\ra\Sigma\sigma_{>1}(\sigma_{>0}X),
\]
\[\dots
\]
Thanks to statement (2) of Theorem~\ref{canonical weight structure},
we can take all these triangles so that the maps induce isomorphisms
at the level of convenient homologies. Using the octahedron axiom of
triangulated categories we prove that in the triangle
\[\sigma_{>1}\sigma_{>0}X\ra X\ra C\ra \Sigma\sigma_{>1}\sigma_{>0}X,
\]
over the composition
\[\sigma_{>1}(\sigma_{>0}X)\ra \sigma_{>0}(X)\ra X,
\]
the object $C$ belongs to $(\cd A)^{w\leq 1}$. Thus
\[\sigma_{>1}\sigma_{>0}X\ra  X\ra  C\ra  \Sigma\sigma_{>1}\sigma_{>0}X
\]
is the truncation triangle corresponding to the weight structure $((\cd A)^{w\leq 1},(\cd A)^{w\geq 1})$, and we can write $C=\sigma_{\leq 1}(X)$ and $\sigma_{>1}(\sigma_{>0})X=\sigma_{>1}(X)$. Moreover, we still have an isomorphism
\[H^pX\arr{\sim}H^p(\sigma_{\leq 1}X)
\]
for $p\leq 1$. Indeed, for $p\leq 0$ we have the following diagram with exact rows
\[\xymatrix{ 0\ar[r] & H^pX\ar[r]\ar[d] & H^p(\sigma_{\leq 0}X)\ar[r]\ar@{=}[d] & H^{p+1}(\sigma_{>0}X)\ar[d]^{\wr} \\
0\ar[r] & H^p(\sigma_{\leq 1}X)\ar[r] & H^p(\sigma_{\leq 0}X)\ar[r] & H^{p+1}(\sigma_{\leq 1}\sigma_{>0}X),
}
\]
and for $p=1$ we have the following diagram with exact rows
\[\xymatrix{H^1(\sigma_{>1}X)\ar[r]\ar@{=}[d] & H^1(\sigma_{>0}X)\ar[r]\ar[d]^{\wr} & H^1(\sigma_{\leq 1}\sigma_{>0}X)\ar[r]\ar[d] & H^2(\sigma_{>1}X)\ar[r]\ar@{=}[d] & H^2(\sigma_{>0}X)\ar[d]^{\wr} \\
H^1(\sigma_{>1}X)\ar[r] & H^1X\ar[r] & H^1(\sigma_{\leq 1}X)\ar[r] & H^2(\sigma_{>1}X)\ar[r] & H^2X,
}
\]
which implies that $H^1(\sigma_{\leq 1}\sigma_{>0}X)\ra H^1(\sigma_{\leq 1}X)$ is an isomorphism, and so from the square
\[\xymatrix{H^1(\sigma_{>0}X)\ar[r]^{\sim}\ar[d]^{\wr} &H^1(\sigma_{\leq 1}\sigma_{>0}X)\ar[d]^{\wr} \\
H^1X\ar[r] & H^1(\sigma_{\leq 1}X)
}
\]
we deduce that $H^1X\ra H^1(\sigma_{\leq 1}X)$ is an isomorphism.

Repeating this construction we get a commutative diagram
\[\xymatrix{\dots\ar[r] & \sigma_{\leq 2}X\ar[r] & \sigma_{\leq 1}X\ar[r] & \sigma_{\leq 0}X \\
\dots\ar[r] & X\ar@{=}[r]\ar[u]^{g_{2}} & X\ar@{=}[r]\ar[u]^{g_{1}} & X\ar[u]^{g_{0}} \\
\dots\ar[r] & \sigma_{>2}X\ar[r]\ar[u] & \sigma_{>1}X\ar[r]\ar[u] & \sigma_{>0}X\ar[u]
}
\]
where the morphisms $H^n(g_{p}):H^nX\ra H^n(\sigma_{\leq p}X)$ are isomorphisms for $n\leq p$. Consider now the induced map
\[X\ra\op{Mlim}_{p\geq 0}\sigma_{\leq p}X.
\]
For each $n\in\Z$ we get a map
\[H^nX\ra H^n(\op{Mlim}_{p\geq 0}\sigma_{\leq p}X)=\op{lim}_{p\geq 0}H^n(\sigma_{\leq p}X)
\]
induced by
\[\xymatrix{\dots\ar[r] & H^n(\sigma_{\leq 2}X)\ar[r] & H^n(\sigma_{\leq 1}X)\ar[r] & H^n(\sigma_{\leq 0}X) \\
\dots\ar[r] & H^nX\ar@{=}[r]\ar[u]^{H^ng_{2}} & H^nX\ar@{=}[r]\ar[u]^{H^ng_{1}} & H^nX\ar[u]^{H^ng_{0}} \\
\dots\ar[r] & H^n(\sigma_{>2}X)\ar[r]\ar[u] & H^n(\sigma_{>1}X)\ar[r]\ar[u] & H^n(\sigma_{>0}X)\ar[u]
}
\]
For each $n\in\Z$, almost every map $H^n(g_{p})$ is an isomorphism, and so the map $H^nX\ra H^n(\op{Mlim}_{p\geq 0}\sigma_{\leq p}X)$ is an isomorphism.

2) Given $X\ko Y\in\cd A$, we have $Y=\op{Mlim}_{q\geq 0}\sigma_{\leq q}Y$, and so
\[\Hom(X,Y)=\Hom(X,\op{Mlim}_{q}\sigma_{\leq q}Y)=\op{lim}_{q}\Hom(X,\sigma_{\leq q}Y).
\]
After applying $\Hom(?,\sigma_{\leq q}Y)$ to the commutative diagram (see the proof of part~1))
\[\xymatrix{
\dots\ar[r] & \Sigma\sigma_{>2}X\ar[r] & \Sigma\sigma_{>1}X\ar[r] & \Sigma\sigma_{>0}X \\
\dots\ar[r] & \sigma_{\leq 2}X\ar[r]\ar[u] & \sigma_{\leq 1}X\ar[r]\ar[u] & \sigma_{\leq 0}X\ar[u] \\
\dots\ar[r] & X\ar@{=}[r]\ar[u] & X\ar@{=}[r]\ar[u] & X\ar[u] \\
\dots\ar[r] & \sigma_{>2}X\ar[r]\ar[u] & \sigma_{>1}X\ar[r]\ar[u] & \sigma_{>0}X\ar[u]
}
\]
we get the commutative diagram
\[\xymatrix{
\Hom(\Sigma\sigma_{>0}X,\sigma_{\leq q}Y)\ar[r]\ar[d] & \Hom(\Sigma\sigma_{>1}X,\sigma_{\leq q}Y)\ar[r]\ar[d] & \Hom(\Sigma\sigma_{>2}X,\sigma_{\leq q}Y)\ar[r]\ar[d] &  \dots \\
\Hom(\sigma_{\leq 0}X,\sigma_{\leq q}Y)\ar[d]\ar[r] & \Hom(\sigma_{\leq 1}X,\sigma_{\leq q}Y)\ar[d]\ar[r] & \Hom(\sigma_{\leq 2}X,\sigma_{\leq q}Y)\ar[d]\ar[r] &  \dots \\
\Hom(X,\sigma_{\leq q}Y)\ar[d]\ar@{=}[r] & \Hom(X,\sigma_{\leq q}Y)\ar@{=}[r]\ar[d] &\Hom(X,\sigma_{\leq q}Y)\ar@{=}[r]\ar[d] &  \dots \\
\Hom(\sigma_{>0}X,\sigma_{\leq q}Y)\ar[r] & \Hom(\sigma_{>1}X,\sigma_{\leq q}Y)\ar[r] & \Hom(\sigma_{>2}X,\sigma_{\leq q}Y)\ar[r] &  \dots
}
\]
For $p\gg 0$ we have $\Hom(\sigma_{>p}X,\sigma_{\leq
q}Y)=0=\Hom(\Sigma\sigma_{>p}X,\sigma_{\leq q}Y)$, and so the map
$\Hom(\sigma_{\leq p}X,\sigma_{\leq q}Y)\ra\Hom(X,\sigma_{\leq q}Y)$
is an isomorphism. Hence,
\[\Hom(X,Y)=\op{lim}_{q\geq 0}\Hom(X,\sigma_{\leq q}Y)=\op{lim}_{q\geq 0}\op{colim}_{p\geq 0}\Hom(\sigma_{\leq p}X,\sigma_{\leq q}Y).
\]
\end{proof}

\begin{proposition}\label{fully faithful}
Assume that each $H^pA\ko p\in\Z$, is a finitely generated $H^0A$-module. Then the functor
\[\RHom(?,S_{A}): (\per A)^{\text{op}}\ra \cd(B^{\text{op}}),
\]
which has its image in $\cd^-(B^{\text{op}})$, is fully faithful.
\end{proposition}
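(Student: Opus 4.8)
\emph{Plan.} Write $F=\RHom(?,S_A)$; it is a contravariant triangulated functor $\cd A\to\cd(B^{\text{op}})$. Since $A$ has homology in degrees $\geq 0$ and each $H^{p}A$ is finitely generated over the semisimple ring $H^{0}A$, for $M\in\per A$ the homology of $FM=\RHom(M,S_A)$ is bounded above (for $n\gg 0$ the complex $\Sigma^{n}S_A$ has homology in very negative degrees, so $\Hom_{\cd A}(M,\Sigma^{n}S_A)=0$), so $F$ lands in $\cd^{-}(B^{\text{op}})$ as claimed. For $M,N\in\per A$ I must show that the natural map
\[
\Phi_{M,N}\colon \Hom_{\cd A}(M,N)\to \Hom_{\cd(B^{\text{op}})}(FN,FM)
\]
is an isomorphism. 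The strategy is a dévissage in two regimes: first the objects with small homology, where $F$ is transparent, and then all of $\per A$ via the limit description of Lemma~\ref{maps as lim-colim}.

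First I would treat the \emph{finite case}, where $M$ and $N$ have bounded homology that is finitely generated over $H^{0}A$ in each degree. Here $F$ becomes transparent: since $F(S_A)=\REnd(S_A)=B$ is the free rank-one module in $\cd(B^{\text{op}})$, one has
\[
\Hom_{\cd(B^{\text{op}})}(\Sigma^{-n}FS_A,FS_A)=H^{n}\REnd_{B^{\text{op}}}(B)=H^{n}B=\Hom_{\cd A}(S_A,\Sigma^{n}S_A),
\]
and one checks this chain of identifications is exactly $\Phi_{S_A,\Sigma^{n}S_A}$, so the base case holds. By Proposition~\ref{the subcategory generated by the simple} every object with bounded, degreewise finite homology is built from shifts of $S_A$ by finitely many extensions and passages to direct summands; since for a fixed first (resp. second) argument the objects on which $\Phi$ is an isomorphism form a thick subcategory—both $\Hom_{\cd A}(?,?)$ and $\Hom_{\cd(B^{\text{op}})}(F?,F?)$ being cohomological—a two-variable five-lemma dévissage propagates the base case to full faithfulness of $F$ on the whole subcategory $\ce$ of such objects.

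Next I would pass to arbitrary $M,N\in\per A$ through the truncation tower. By Lemma~\ref{maps as lim-colim}(1), $M\cong\op{Mlim}_{p\geq 0}\sigma_{\leq p}M$, and each $\sigma_{\leq p}M$ lies in $\ce$, so $F(\sigma_{\leq p}M)$ is perfect, hence compact, in $\cd(B^{\text{op}})$. Applying $F$ to the tower gives a direct system $\{F\sigma_{\leq p}M\}_{p}$ with a compatible map to $FM$, and I claim the induced morphism $\op{Mcolim}_{p}F\sigma_{\leq p}M\to FM$ is an isomorphism: on $H^{n}$ it becomes $\op{colim}_{p}\Hom_{\cd A}(\sigma_{\leq p}M,\Sigma^{n}S_A)\to\Hom_{\cd A}(M,\Sigma^{n}S_A)$, which is an isomorphism by Lemma~\ref{maps as lim-colim}(2) applied with the $\sigma_{\leq q}$-stable target $\Sigma^{n}S_A$. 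With $FM\cong\op{Mcolim}_{p}F\sigma_{\leq p}M$ and $FN\cong\op{Mcolim}_{q}F\sigma_{\leq q}N$ in hand, and each $F\sigma_{\leq q}N$ compact, I can compute
\[
\Hom_{\cd(B^{\text{op}})}(FN,FM)=\op{lim}_{q}\op{colim}_{p}\Hom_{\cd(B^{\text{op}})}(F\sigma_{\leq q}N,F\sigma_{\leq p}M).
\]
By the finite case each inner term equals $\Hom_{\cd A}(\sigma_{\leq p}M,\sigma_{\leq q}N)$, and Lemma~\ref{maps as lim-colim}(2) identifies the double limit with $\Hom_{\cd A}(M,N)$; tracking the identifications shows the resulting isomorphism is $\Phi_{M,N}$.

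The main obstacle is this last computation of $\Hom_{\cd(B^{\text{op}})}(FN,FM)$ as an iterated limit. Writing $FN$ as a Milnor colimit produces a $\op{lim}^{1}$ term in the Hom out of it, while writing $FM$ as a Milnor colimit requires the compactness of each $F\sigma_{\leq q}N$ to convert $\Hom(F\sigma_{\leq q}N,\op{Mcolim}_{p}F\sigma_{\leq p}M)$ into $\op{colim}_{p}\Hom(F\sigma_{\leq q}N,F\sigma_{\leq p}M)$. The finiteness hypothesis is exactly what is needed here: it guarantees that the truncations are compact on both sides and, together with the vanishing $\Hom(\sigma_{>p}(?),\sigma_{\leq q}(?))=0$ for $p\gg 0$ observed in the proof of Lemma~\ref{maps as lim-colim}, ensures the relevant systems are Mittag--Leffler, killing the $\op{lim}^{1}$ contributions. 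The remaining care is bookkeeping: checking that all identifications are natural and compatible with the transition maps of the towers, so that the term-wise isomorphisms of the finite case genuinely assemble into $\Phi_{M,N}$ rather than into a merely abstract isomorphism of the two Hom-groups.
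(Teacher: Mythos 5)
Your proposal is correct in substance and shares the paper's first half: full faithfulness on the objects with bounded, degreewise finitely generated homology (equivalently, on $\op{thick}_{\cd A}(S_A)$, by Proposition~\ref{the subcategory generated by the simple}) via d\'evissage from the identification $\REnd(S_A)=B$ — this is exactly the paper's first step. Where you genuinely diverge is in the passage to arbitrary perfect objects. The paper's second step shows that $F=\RHom(?,S_A)$ carries the weight truncation $\sigma_{\leq p}X$ of a perfect $X$ to a weight truncation $\sigma_{\geq -p}(FX)$ for the \emph{canonical weight structure on $\cd(B^{\text{op}})$} of Appendix~0 (using Proposition~\ref{the subcategory generated by the simple}(2), Remark~\ref{shifts of the regular module in the right wing} and orthogonality), and then concludes by matching the formula of Lemma~\ref{maps as lim-colim}(2) in $\cd A$ against the analogous formula of Theorem~\ref{canonical weight structure for negative dg algebras}(4) in $\cd(B^{\text{op}})$. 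You never invoke the weight structure on $\cd(B^{\text{op}})$ at all: you exhibit $FM$ directly as $\op{Mcolim}_p F\sigma_{\leq p}M$ (an $H^n$-computation resting on compactness of $B$), and you compute $\Hom(FN,FM)$ from the Milnor triangle of $FN$ using compactness of the perfect objects $F\sigma_{\leq q}N$ together with a $\op{lim}^1$-vanishing. In effect you inline, for the specific towers $F\sigma_{\leq p}M$, the content of parts (3)--(4) of the Appendix~0 theorem. What you buy is independence from Appendix~0 and from the paper's second step; the price is that the $\op{lim}/\op{lim}^1$ bookkeeping, which the paper delegates to the general theorem, must be done by hand — and it is precisely there that your justification is imprecise.

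The imprecision: the Mittag--Leffler property of the inverse system $q\mapsto \Hom(\Sigma F\sigma_{\leq q}N, FM)\cong\Hom_{\cd A}(M,\Sigma^{-1}\sigma_{\leq q}N)$ does \emph{not} follow from the vanishing $\Hom(\sigma_{>p}(?),\sigma_{\leq q}(?))=0$ for $p\gg 0$ that you cite. That is the orthogonality axiom w2) (morphisms from high weights to low weights vanish), and it is what stabilizes the colimit in $p$; for the limit in $q$ you need the opposite-looking vanishing, namely that morphisms from $M$ \emph{into} objects concentrated in large degrees vanish, which is not part of any weight-structure axiom. It does hold here, but for a different reason: the cone of $\sigma_{\leq q+1}N\to\sigma_{\leq q}N$ has homology concentrated in degree $q+1$, and since $M$ is perfect — a direct summand of a finite iterated extension of shifts $\Sigma^{b_i}A$ — one has $\Hom_{\cd A}(\Sigma^{b}A,Z)=H^{-b}Z=0$ whenever $Z$ has homology concentrated in a single degree different from $-b$; hence for $q$ exceeding the finitely many $-b_i$ the transition maps of your system are isomorphisms, the system is eventually constant, and $\op{lim}^1$ vanishes. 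So the needed fact is supplied by perfectness of $M$ and $N$, not by hypothesis c) (whose actual role in your argument is to make Proposition~\ref{the subcategory generated by the simple}(2) available, placing $\sigma_{\leq p}M$ in $\op{thick}(S_A)$ and thereby making $F\sigma_{\leq p}M$ compact). With this correction your argument goes through and constitutes a valid, more self-contained alternative to the paper's proof.
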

\begin{proof}
For the first claim it suffices to notice that
\[\Hom_{\cd A}(\Sigma^{-p}X,\sigma_{\leq 0}A)=0
\]
for $X\in\per A$ and $p\gg 0$, since every object in $\per A$ has left bounded homology.

We prove the second claim in several steps.

{\em First step: The functor $\RHom(?,S_{A}):\op{thick}(S_{A})^{\text{op}}\ra\cd(B^{\text{op}})$ is fully faithful.} Indeed, we can do finite d\'evissage using the fact that the map
\[\RHom(?,S_{A}): \Hom_{\cd A}(S_{A},S_{A})\ra \Hom_{\cd(B^\text{op})}(B,B)
\]
is an isomorphism.

{\em Second step: preservation of truncation of perfect objects.} Here we will use both the weight structure on $\cd A$ (see Corollary~\ref{canonical weight structure for positive dg algebra}) and the canonical weight structure on $\cd(B^\text{op})$ (see Appendix 0).  The truncation triangle for $A$ corresponding to the weight structure of Corollary~\ref{canonical weight structure for positive dg algebra} is
\[\sigma_{>0}(A)\ra A\ra S_{A}\ra\Sigma\sigma_{>0}(A).
\]
After applying $\RHom(?,S_{A})$ and rotating we gt the triangle
\[B\ra\RHom(A,S_{A})\ra\RHom(\sigma_{>0}A,S_{A})\ra \RHom(S_{A},\Sigma S_{A}),
\]
where $B\in\cd^-(B^\text{op})^{w\geq 0}$ and $\RHom(\sigma_{>0}A,S_{A})\in\cd^-(B^\text{op})^{w<0}$. If $X$ is an arbitrary perfect module, then
one can prove that $\RHom(\sigma_{\leq p}X,S_{A})$ belongs to $\cd^-(B^\text{op})^{w\geq -p}$ by using part~2) of Proposition~\ref{the subcategory generated by the simple} together with Remark~\ref{shifts of the regular module in the right wing}, and one can prove that $\RHom(\sigma_{>p}X,S_{A})\in\cd^-(B^\text{op})^{w<p}$ by using the orthogonality property of weight structures.

{\em Third step: the claim.} Put $F=\RHom(?,S_{A})$. Let $X$ and $Y$ be two objects of $\per A$. Thanks to Lemma~\ref{maps as lim-colim}, Theorem~\ref{canonical weight structure for negative dg algebras}, step 2 and step 1 of this proof, and Proposition~\ref{the subcategory generated by the simple}, we have the following commutative diagram
\[\xymatrix{\Hom(X,Y)\ar@{=}[dd]\ar[rrr] &&& \Hom(FY, FX)\ar@{=}[d] \\
 &&& \op{lim}_{q}\op{colim}_{p}\Hom(\sigma_{\geq -q}FY, \sigma_{\geq -p}FX)\ar@{=}[d] \\
 \op{lim}_{q}\op{colim}_{p}\Hom(\sigma_{\leq p}X,\sigma_{\leq q}Y)\ar[rrr]^{\sim} &&& \op{lim}_{q}\op{colim}_{p}\Hom(F\sigma_{\leq q}Y, F\sigma_{\leq p}X)
}
\]
\end{proof}

\section{Reminder on $t$-structures}

 A {\em $t$-structure}
\cite{BBD} on a triangulated category $\cd$ is a pair $t=(\cd^{\leq
0},\cd^{\geq 0})$ of strictly full triangulated subcategories of
$\cd$ such that:
\begin{itemize}
\item[1)] $\cd^{\leq 0}$ is closed under $\Sigma$ and $\cd^{\geq 0}$ is closed under $\Sigma^{-1}$,
\item[2)] $\Hom_{\cd}(M,\Sigma^{-1}N)=0$ for each $M\in\cd^{\leq 0}$ and $N\in\cd^{\geq 0}$,
\item[3)] for each $M\in\cd$ there exists a triangle in $\cd$
\[M^{\leq 0}\ra M\ra M^{\geq 1}\ra\Sigma M^{\leq 0},
\]
with $M^{\leq 0}\in\cd^{\leq 0}$ and $\Sigma (M^{\geq 1})\in\cd^{\geq 0}$.
\end{itemize}

It is easy to prove that each one of the two subcategories completely determines the other one in the following sense: an object $N\in\cd$ belongs to $\cd^{\geq 0}$ (resp. $\cd^{\leq 0}$) if and only if we have
\[\Hom_{\cd}(M,\Sigma^{-1}N)=0
\]
for each $M\in\cd^{\leq 0}$ (resp. for each $N\in\cd^{\geq 0}$).

It is also easy to prove that the triangle above is unique up to a unique isomorphism extending the identity morphism $\id_{M}$. Hence, for each $M\in\cd$ we can make choices of the objects $M^{\leq 0}$ and $M^{\geq 1}$ so that the map $M\mapsto M^{\leq 0}$ underlies a functor $(?)^{\leq 0}:\cd\ra\cd^{\leq 0}$ right adjoint to the inclusion, and the map $M\mapsto \Sigma((\Sigma^{-1}M)^{\geq 1})$ underlies a functor $(?)^{\geq 0}:\cd\ra\cd^{\geq 0}$ left adjoint to the inclusion.

The {\em heart} of $t$ is the full subcategory $\ch(t)$ of $\cd$ formed by those objects which are in $\cd^{\leq 0}$ and also in $\cd^{\geq 0}$. It is an abelian category, and the functor
\[\H{0}:\cd\ra\ch(t)\ko M\mapsto (M^{\leq 0})^{\geq 0},
\]
which is said to be the {\em $0$th homology functor of $t$}, is homological, \ie takes triangles to long exact sequences.

A $t$-structure $t=(\cd^{\leq 0},\cd^{\geq 0})$ is {\em non degenerate} if we have
\[\bigcap_{n\in\Z}\Sigma^n\cd^{\leq 0}=\{0\}=\bigcap_{n\in\Z}\Sigma^n\cd^{\geq 0}.
\]
This property implies that an object $M$ of $\cd$:
\begin{enumerate}
\item[-] vanishes if and only if $\H{0}(\Sigma^nM)=0$ for each $n\in\Z$,
\item[-] belongs to $\cd^{\leq 0}$ if and only if $\H0(\Sigma^nM)=0$ for $n>0$,
\item[-] belongs to $\cd^{\geq 0}$ if and only if $\H0(\Sigma^nM)=0$ for $n<0$.
\end{enumerate}
The $t$-structure $t$ is {\em bounded} if we have:
\[\bigcup_{n\in\Z}\Sigma^n\cd^{\leq 0}=\cd=\bigcup_{n\in\Z}\Sigma^n\cd^{\geq 0}.
\]
Note that any bounded $t$-structure $t$ is non degenerate. Indeed,
if $t$ is bounded, any object $M$ is a finite extension of shifts of
objects of the form $H^0(\Sigma^nM)\ko n\in\Z$. But if
$M\in\bigcap_{n\in\Z}\Sigma^n\cd^{\leq 0}$ or
$M\in\bigcap_{n\in\Z}\Sigma^n\cd^{\geq 0}$, then we have
$H^0(\Sigma^nM)=0$ for each $n\in\Z$.

A {\em left aisle} (resp. {\em right aisle}) in a triangulated category $\cd$ is a full subcategory $\cu$ containing a zero object $0$ of $\cd$, closed under $\Sigma$ (resp. $\Sigma^{-1}$), closed under extensions, and such that the inclusion functor $\cu\ra\cd$ admits a right (resp. left) adjoint. We have already mentioned that if $t=(\cd^{\leq 0},\cd^{\geq 0})$ is a $t$-structure on $\cd$, then $\cd^{\leq 0}$ is a left aisle in $\cd$ and $\cd^{\geq 0}$ is a right aisle in $\cd$. Moreover, it is proved in \cite[\S 1]{KellerVossieck1988b} that the map $(\cd^{\leq 0},\cd^{\geq 0})\mapsto\cd^{\leq 0}$ underlies a bijection between the set of $t$-structures on $\cd$ and the set of left aisles in $\cd$, and similarly for right aisles. We will refer to $\cd^{\leq 0}$ (resp. $\cd^{\geq 0}$) as {\em the left (resp. right) aisle of $t$}.

\begin{example}\label{canonical t-structure for negative dg algebras}
It is shown in Appendix~2 that if $A$ is a dg algebra, there exists
a $t$-structure $t_{A}$ on its unbounded derived category $\cd A$
such that $\cd^{\geq 0}$ is formed by those modules whose ordinary
homology is concentrated in non negative degrees, and $\cd^{\leq 0}$
is formed by those modules $M$ which fit into a triangles
\[\coprod_{i\geq 0}L_{i}\ra\coprod_{i\geq 0}L_{i}\ra M\ra\Sigma\coprod_{i\geq 0}L_{i}
\, ,
\]
where $L_{i}$ is an $i$-fold extension of small coproducts of
non negative shifts of $A$. Therefore, if $A$ has homology
concentrated in non positive degrees, it is not difficult to prove
that $\cd^{\leq 0}$ is formed by those modules whose ordinary
homology is concentrated in non positive degrees. In this case, if
we assume moreover, as we may, that the components of $A$ vanish in
strictly positive degrees,  the functors $(?)^{\leq 0}$ and
$(?)^{\geq 0}$ are given by the usual intelligent truncations, and
the associated $0$th homology functor gives the ordinary homology in
degree $0$. Therefore, we say that the $t$-structure $t_{A}$ is the
{\em canonical} one. It is a non degenerate $t$-structure, whose
heart is equivalent to the category of unital right modules over the
ring $\H0(A)$:
\[\H{0}:\ch(t_{A})\arr{\sim}\Mod\H0(A)
\]
(see for example \cite[Lemma 5.2.b)]{KellerYang}).

Assume now that $A$ is a dg algebra over a field $k$, and let us consider the finite-dimensional derived category $\cd_{fd}A$ (see \S~\ref{notation}). The
canonical $t$-structure on $\cd A$ restricts to a bounded $t$-structure $t_{A}^{fd}$ on $\cd_{fd}A$, whose heart is
equivalent to the category of finite-dimensional unital right
modules over the $k$-algebra $\H0(A)$:
\[\H{0}:\ch(t^{fd}_{A})\arr{\sim}\mod\H0(A).
\]
In particular, $\ch(t_{A}^{fd})$ is a length category. If, moreover,
$\H0(A)$ is finite-dimensional, then $\ch(t^{fd}_{A})$ has a finite
number of isoclasses of simple objects.
\end{example}

\section{Application to the construction of $t$-structures}\label{s:construction-of-t-structures}

\begin{theorem}\label{theorem:t-structure on perA}
Let $k$ be a commutative associative ring with unit, and let $A$ be a dg $k$-algebra such that:
\begin{itemize}
\item[a)] $H^pA=0$ for $p< 0$,
\item[b)] $H^0A$ is a semi-simple $k$-algebra, and
\item[c)] each $H^pA\ko p\in\Z$, is a finitely generated $H^0A$-module.
\end{itemize}
Then the perfect derived category $\per A$ admits a bounded $t$-structure whose left (resp. right) aisle is the smallest full subcategory containing $A$ and closed under extensions, positive (resp. negative) shifts and direct summands. Its heart is a length category whose simple objects are the indecomposable direct summands of $A$ in $\per A$.
\end{theorem}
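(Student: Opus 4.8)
The plan is to pull the canonical $t$-structure on the Koszul dual back along the fully faithful functor of Proposition~\ref{fully faithful}. Since $B=\REnd(S_A)$ has homology concentrated in non positive degrees, so does $B^{\mathrm{op}}$, and Example~\ref{canonical t-structure for negative dg algebras} furnishes the canonical $t$-structure $t_B$ on $\cd(B^{\mathrm{op}})$, whose aisles are read off from ordinary homology and which restricts to a bounded $t$-structure on $\cd_{fd}(B^{\mathrm{op}})$ with heart $\mod H^0B$. Write $F=\RHom(?,S_A)\colon(\per A)^{\mathrm{op}}\to\cd(B^{\mathrm{op}})$; by Proposition~\ref{fully faithful} it is fully faithful with image in $\cd^-(B^{\mathrm{op}})$.

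First I would pin down the image. Using Lemma~\ref{indecomposable decomposition of A} write $A=\bigoplus_{i=1}^rA_i$ into indecomposables; I claim each $F(A_i)$ is the stalk in degree $0$ of a simple $H^0B$-module. Indeed $H^pF(A_i)=\Hom_{\cd A}(A_i,\Sigma^pS_A)$ is a direct summand of $H^p(S_A)$, which vanishes for $p\neq0$ by the properties of $S_A=\sigma_{\leq0}A$, so $F(A_i)$ is concentrated in degree $0$; that $H^0F(A_i)$ is simple over $H^0B=\End_{\cd A}(S_A)$, and that the $F(A_i)$ exhaust the simples, follows from the analysis of $S_A=\bigoplus_i\sigma_{\leq0}(A_i)$ in Corollary~\ref{lifting graded simples}. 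Consequently $F(\per A)=\op{thick}_{\cd B^{\mathrm{op}}}(F(A))$ is the thick subcategory generated by these simples; since the homology of any $F(X)$, $X\in\per A$, is finite dimensional (condition~c)) with composition factors among the $F(A_i)$, this thick subcategory is closed under the homological truncations of $t_B$.

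Next I would transport $t_B$ through the anti-equivalence $F\colon(\per A)^{\mathrm{op}}\xrightarrow{\sim}F(\per A)$. Concretely, the truncation triangle of any $F(X)$ in $\cd(B^{\mathrm{op}})$ lies, by the previous paragraph, in the image of $F$, and applying $F^{-1}$ produces a truncation triangle in $\per A$; contravariance of $F$ interchanges the two aisles and the identity $F\Sigma=\Sigma^{-1}F$ gives the correct stability, so one obtains a $t$-structure on $\per A$ whose left (resp.\ right) aisle is $\{X:\Hom_{\cd A}(X,\Sigma^pS_A)=0\ \forall\,p<0\}$ (resp.\ $\forall\,p>0$). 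Boundedness and the description of the heart are inherited from $\cd_{fd}(B^{\mathrm{op}})$: the heart is anti-equivalent to the length category of finite length $H^0B$-modules supported on the simples $F(A_i)$, so it is a length category whose simple objects are exactly the $A_i$, i.e.\ the indecomposable summands of $A$ in $\per A$.

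Finally I would match the aisles with the stated description. One inclusion is immediate: $A$ lies in the left aisle because $\Hom_{\cd A}(A,\Sigma^pS_A)=H^p(S_A)=0$ for $p<0$, and the left aisle is visibly closed under extensions, positive shifts and direct summands, whence $\op{susp}^{\oplus}(A)$ is contained in it. For the reverse inclusion I would use boundedness: every object $X$ of the left aisle is a finite iterated extension of positive shifts $\Sigma^{-n}H^n_t(X)$ with $n\leq0$ of objects of the heart, and each heart object is a finite extension of the simples $A_i\in\add A$; hence $X\in\op{susp}^{\oplus}(A)$. The right aisle is treated symmetrically. The main obstacle is the middle step: verifying that $F(\per A)$ is genuinely closed under the $t_B$-truncations---equivalently, that the composition factors of $H^*F(X)$ stay among the $F(A_i)$---for which I expect to lean on part~2) of Proposition~\ref{the subcategory generated by the simple} together with the identification of the $F(A_i)$ as the simples.
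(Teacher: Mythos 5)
Your proposal is correct and follows essentially the same route as the paper: both transport the canonical $t$-structure on $\cd(B^{\mathrm{op}})$ (Appendix~0 / Example~\ref{canonical t-structure for negative dg algebras}) back through the fully faithful functor $\RHom(?,S_A)$ of Proposition~\ref{fully faithful}, after identifying the image with $\op{thick}_{\cd(B^{\mathrm{op}})}(H^0A)$, described homologically as the objects with bounded homology whose composition factors in each degree lie among the simples $F(A_i)$. Your final paragraph matching the aisles with $\op{susp}^{\oplus}(A)$ via boundedness and d\'evissage through the heart is a welcome elaboration of a point the paper leaves implicit, but it is not a different method.
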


\begin{remark} Suppose $k$ is a field and $H^0A$ is isomorphic to a product of copies of $k$. Then the theorem follows from Proposition~3.4 of Rickard-Rouquier's \cite{RickardRouquier10} applied to the triangulated category $\ct=\per A$ and to the set $\mathcal{S}$ formed by a system of representatives of the indecomposable direct factors of $A$ in $\per A$.
\end{remark}

\begin{proof}
Consider the functor $\RHom(?,S_{A}):(\cd A)^{\text{op}}\ra\cd(B^\text{op})$. Thanks to Proposition~\ref{fully faithful}, we know its restriction to $(\per A)^{\text{op}}$ is fully faithful. Notice that the obvious morphism of dg algebras $B\ra H^0B$ (using the intelligent truncation) and the isomorphism of ordinary algebras $H^0B\ra H^0A$ allow us to regard $H^0A$ as a dg $B$-module. Moreover, we have isomorphisms
\[\RHom(A,S_{A})\arr{\sim} S_{A}{\stackrel{\sim}{\leftarrow}}H^0A
\]
compatible with the structure of left dg $B$-modules of $\RHom(A,S_{A})$ and $H^0A$. Thus
\[\RHom(?,S_{A}):(\per A)^{\text{op}}\arr{\sim}\op{thick}_{\cd(B^{\text{op}})}(H^0A)
\]
is an equivalence. The picture of the situation is the following:
\[\xymatrix{(\cd A)^{\text{op}}\ar[d]_{\RHom(?,S_{A})} && (\per A)^{\text{op}}\ar[d]^{\wr}\ar@{^(->}[ll] \\
\cd(B^{\text{op}}) && \op{thick}(H^0A)\ar@{^(->}[ll]
}
\]
Let us consider a full subcategory $\ca$ of the heart $\ch$ of the canonical $t$-structure on $\cd(B^{\text{op}})$ formed by those objects with a finite composition series in which the composition factors are direct summands of $H^0A$. It is not difficult to prove that $\op{thick}(H^0A)$ is precisely the full subcategory $\ct$ of $\cd(B^\text{op})$ formed by those modules $M$ such that $H^pM=0$ for almos every $p\in\Z$ and $H^pM\in\ca$ for each $p\in\Z$. With this description it is easy to check that the canonical $t$-structure restricts to a $t$-structure on $\ct$ whose heart is $\ca$. The simple objects of this heart are given by the simple $H^0A$-modules, \ie the indecomposable direct summands of $H^0A$, which corresponds bijectively to the indecomposable direct summands of $A$.
\end{proof}

A triangulated category can be recovered from the heart of a bounded $t$-structure by closing under extensions and shifts. Taking this into account, we have:

\begin{corollary}
Let $A$ be as in Theorem \ref{theorem:t-structure on perA}. Then $\per A$ is the smallest full triangulated subcategory of $\cd A$ closed under extensions, shifts and containing the indecomposable direct summands of $A$ .
\end{corollary}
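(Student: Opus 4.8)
The plan is to read the result off the bounded $t$-structure constructed in Theorem~\ref{theorem:t-structure on perA}, using the general principle---recalled in the sentence preceding the corollary---that a triangulated category is generated under extensions and shifts by the heart of any bounded $t$-structure on it. Write $\cu$ for the smallest full triangulated subcategory of $\cd A$ closed under extensions and shifts and containing the indecomposable direct summands $A_1,\dots,A_r$ of $A$ in $\per A$. I claim $\cu=\per A$; since $\cu$ is by construction the smallest such subcategory, this is exactly the assertion.

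First I would establish $\cu\subseteq\per A$. By definition $\per A=\op{thick}_{\cd A}(A)$ is a thick subcategory of $\cd A$, hence a full triangulated subcategory closed under extensions and under the shifts $\Sigma^{\pm 1}$; being closed under direct summands it contains each $A_i$. Thus $\per A$ is one of the subcategories of which $\cu$ is the intersection, and therefore $\cu\subseteq\per A$.

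For the reverse inclusion I would invoke Theorem~\ref{theorem:t-structure on perA}: the category $\per A$ carries a bounded $t$-structure whose heart $\ch$ is a length category with simple objects $A_1,\dots,A_r$. Since $\ch$ is a length category, every object of $\ch$ is a finite iterated extension of the $A_i$ and so lies in $\cu$. Since the $t$-structure is bounded, every object $X$ of $\per A$ is a finite iterated extension of shifts of objects of $\ch$, namely of the shifts $\Sigma^{-n}\H{0}(\Sigma^n X)$ with $n\in\Z$, of which only finitely many are nonzero; as $\cu$ is closed under shifts and extensions this forces $X\in\cu$. Combining the two inclusions gives $\per A=\cu$.

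The one point deserving care, and the only non-formal ingredient, is the assertion that boundedness of the $t$-structure makes each object a finite iterated extension of shifts of its heart-cohomologies. This is precisely the recovery principle quoted before the statement: iterating the truncation functors of the $t$-structure, the boundedness hypothesis $\bigcup_n\Sigma^n\cd^{\leq 0}=\per A=\bigcup_n\Sigma^n\cd^{\geq 0}$ guarantees that the process terminates after finitely many steps, so that $X$ really is built from finitely many of the $\Sigma^{-n}\H{0}(\Sigma^n X)$. Everything else is formal once this is in hand.
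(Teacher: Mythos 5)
Your proof is correct and takes essentially the same route as the paper, which derives the corollary from the remark immediately preceding it (a triangulated category is recovered from the heart of a bounded $t$-structure by closing under shifts and extensions), combined with the fact from Theorem~\ref{theorem:t-structure on perA} that the heart is a length category whose simples are the indecomposable direct summands of $A$. You have merely made explicit the two dévissage steps (objects of the heart are finite extensions of the simples; boundedness gives the finite filtration by shifted heart-cohomologies) that the paper leaves implicit.
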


\begin{remark}
Notice that the simple objects of the heart are also in bijection with the simple modules over $\H0(A)$.
\end{remark}

\begin{corollary}\label{formal}
Let $A$ be an algebra as in Theorem \ref{theorem:t-structure on
perA}. If we assume moreover that $A$ is formal, then
$\per(H^*A)$ admits a canonical $t$-structure whose left
(resp. right) aisle is the smallest full subcategory containing
$H^*A$ and closed under extensions, positive (resp. negative) shifts and direct summands. Its heart is a length category
whose simples are the indecomposable direct summands of
$H^*A$ in $\per(H^*A)$.
\end{corollary}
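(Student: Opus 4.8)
The plan is to apply Theorem~\ref{theorem:t-structure on perA} directly to the graded algebra $H^*A$, regarded as a dg $k$-algebra with zero differential. Since this differential vanishes, the cohomology of $H^*A$ in each degree $p$ is simply $(H^*A)^p=H^pA$. Hence the three hypotheses of the theorem hold for $H^*A$ precisely because they hold for $A$: we have $H^p(H^*A)=H^pA=0$ for $p<0$, the ring $H^0(H^*A)=H^0A$ is semi-simple, and each $H^p(H^*A)=H^pA$ is finitely generated over $H^0(H^*A)=H^0A$.

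Applying Theorem~\ref{theorem:t-structure on perA} to $H^*A$ then yields a bounded $t$-structure on $\per(H^*A)$ whose left (resp. right) aisle is the smallest full subcategory containing $H^*A$ and closed under extensions, positive (resp. negative) shifts and direct summands, and whose heart is a length category whose simple objects are the indecomposable direct summands of $H^*A$ in $\per(H^*A)$. This is exactly the asserted conclusion.

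The formality hypothesis enters only to identify this structure with the one already obtained on $\per A$. Indeed, a formality quasi-isomorphism (or zig-zag of quasi-isomorphisms) between $A$ and $H^*A$ induces a triangle equivalence $\per A\iso\per(H^*A)$ sending $A$ to $H^*A$, under which the $t$-structure furnished by Theorem~\ref{theorem:t-structure on perA} on $\per A$ is carried to the one just constructed. This is what justifies calling it the canonical $t$-structure, in analogy with the terminology introduced in Example~\ref{canonical t-structure for negative dg algebras}.

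I do not expect a substantial obstacle here: the entire content is the elementary observation that passing from a dg algebra to its cohomology, viewed with zero differential, leaves the hypotheses a), b) and c) untouched, since the cohomology of such an algebra is the algebra itself. The only point requiring a moment's care is the compatibility claim of the third paragraph, namely that the formality equivalence really does match the two $t$-structures; but this is immediate once one notes that the equivalence sends the generator $A$ to the generator $H^*A$ and that both aisles are, by Theorem~\ref{theorem:t-structure on perA}, determined by the closure of the generator under extensions, shifts and direct summands.
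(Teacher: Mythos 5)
Your proof is correct, but it takes a genuinely different route from the one the paper intends. The paper gives no written proof precisely because it regards the corollary as an immediate transport of structure: formality yields a zig-zag of quasi-isomorphisms between $A$ and $H^*A$, hence a triangle equivalence $\per A\simeq\per(H^*A)$ sending $A$ to $H^*A$; since a triangle equivalence commutes with closing under extensions, shifts and direct summands, it carries the $t$-structure of Theorem~\ref{theorem:t-structure on perA} --- aisles, heart and simples included --- to the asserted structure on $\per(H^*A)$. You instead apply Theorem~\ref{theorem:t-structure on perA} directly to the dg algebra $H^*A$ with zero differential, observing that hypotheses a), b), c) are inherited verbatim because the homology of such an algebra is the algebra itself; this is a legitimate application and in fact proves something stronger, namely that the existence statement holds for any non-negatively graded algebra with semi-simple degree-zero part and finitely generated homogeneous components, so that formality is not needed for existence at all, but only to identify the resulting $t$-structure with the one on $\per A$ (i.e.\ to make the equivalence $t$-exact) --- which is exactly the compatibility your third paragraph supplies, and which is what the subsequent remark on Schn\"urer's work actually exploits. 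In short, the two arguments buy different things: the paper's transport argument makes the role of the formality hypothesis transparent and gives $t$-exactness of the equivalence for free, while yours isolates the purely graded statement and shows that in the corollary the formality assumption is only needed to justify the word `canonical'.
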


\begin{remark}
Theorem \ref{theorem:t-structure on perA} should be compared with a
result by O.~Schn\"urer \cite{Schnuerer08} which states the
existence of a canonical $t$-structure on the perfect derived
category of a dg algebra $B$ positively graded, with $B^0$
semi-simple and whose differential vanishes on $B^0$. The main
motivation for Schn\"urer's theorem was to prove that certain
categories of sheaves, endowed with a perverse $t$-structure, are
$t$-equivalent to the perfect derived category of a certain dg
algebra $B$ endowed with a canonical $t$-structure (see
\cite{SchnuererThesis}). In practice, $B$ is the homology algebra
$H^*A$ of a formal dg algebra $A$ satisfying conditions of
Theorem \ref{theorem:t-structure on perA}, and so the existence of a
canonical $t$-structure on $\per B$ follows from Theorem
\ref{theorem:t-structure on perA} and Corollary \ref{formal}.

\comment{\textcolor{red}{Morally (but not formally), the algebra
considered by Schn\"urer is the homology of the algebra we are
considering. What motivated Schn\"urer to consider this kind of
algebras $B$ is to prove that a certain equivariant derived category
of sheaves is triangle equivalent to the perfect derived category of
$B$. Apparently there is a quite general strategy, perhaps due to
Valery Lunts, to obtain these equivalences. The tricky point of this
strategy is to prove the formality of a certain dg algebra $A$. To
be slightly more precise: let $X$ be a complex variety, let $\ct$ be
a certain stratification of $X$, for each stratum $T\in\ct$ let
$\ci\cc(T)$ be the equivariant intersection homology complex of the
closure of $T$, and let $P_{T}$ a finite projective resolution of
$\ci\cc(T)$ in the category $\op{Perv}(X,\ct)$ of perverse sheaves.
Let $A$ be the endomorphism dg algebra of $\coprod_{T\in\ct}P_{T}$,
and let $B=H^*A$. It seems like if the relation between
$\cd^{b}(X,\ct)$ and $A$ is clear, but Schn\"urer feels forced to
prove that $A$ is formal (\ie $A$ is linked to $B$ by a chain of
quasi-isomorphisms) because he knows that $\per B$ admits a
canonical $t$-structure. Since we know that $\per A$ also admits a
canonical $t$-strucure, perhaps helps to avoid the step of proving
formality of $A$...}\textcolor{blue}{Summary: if we assume that our
algebra $A$ is formal, then from our theorem we would deduce the
existence of a canonical $t$-structure on $\per(H^*A)$. This
is an alternative, more complete, approach to the examples
considered by Schn\"urer, Bernstein-Lunts, Lunts, Guillermou,... In
fact, apparently at each case they have to prove that their algebra
is formal to get the algebraic description of the derived category
of sheaves. However, without assuming the formality of the algebra,
thanks to our theorem it should be possible to give the algebraic
description in terms of $A$, and then if $A$ is formal we would also
have the algebraic description in terms of $H^*A$.}}
\end{remark}

\begin{example}\label{example of positive dg algebra}
Let $A$ be a dg algebra over a field $k$ such that in each degree its homology is of finite dimension and vanishes for large degrees. Let $S_{1}\ko\dots\ko S_{r}$, be a family of perfect $A$-modules such that:
\begin{itemize}
\item[a)] $\Hom_{\cd A}(S_{i},S_{j})=\begin{cases}0 & \text{ if }i\neq j,\\ k\cdot \id_{S_{i}}&\text{ if }i=j.\end{cases}$
\item[b)] $\Hom_{\cd A}(S_{i},\Sigma^pS_{j})=0$ for each $p<0$.
\end{itemize}
Then the derived endomorphism dg algebra
$B=\op{REnd}_{A}(\bigoplus_{i=1}^{r}S_{i})$ satisfies the conditions
of Theorem~\ref{theorem:t-structure on perA}. Indeed, the homology
groups of $B$ vanish in degrees $<0$ by condition~b) and they are
finite-dimensional and vanish in degrees $\gg 0$ because the $S_i$
are perfect.
\end{example}

\begin{nonexample}
Here we show that condition~b) of our theorem is not redundant.
Indeed, let $A$ be a finite-dimensional algebra of infinite global
dimension over a field $k$. We will show that $\per A$ does not
admit a canonical $t$-structure. Indeed, assume $\per A$ admits a
$t$-structure $t$ such that $\per(A)^{t\leq 0}$ is the smallest full
subcategory of $\per A$ containing $A$ and closed under extensions,
shifts and direct summands. Then, by d\'evissage, we deduce that
$\per(A)^{t\geq 0}$ is the full subcategory of $\per A$ formed by
those objects with ordinary homology concentrated in non negative
degrees. On the other hand, it is clear that the objects of
$\per(A)^{t\leq 0}$ have ordinary homology concentrated in
non positive degrees. Thus, if $P$ belongs to $\per A$, then in the
triangle
\[
P^{t\leq 0} \ra P \ra P^{t\geq 1} \ra\Sigma (P^{t\leq 0}) \ko
\]
the object $P^{t\leq 0}$ only has homology in non positive degrees
and the object $P^{t\geq 1}$ only has homology in strictly positive
degrees. Therefore, this is the triangle for the natural
$t$-structure and so the truncation functors of the given
$t$-structure $t$ on $\per A$ coincide with those of the natural
$t$-structure. It follows that $\per A$ is stable under the natural
truncation functor $P \mapsto \tau_{\geq 0} P$. This is a
contradiction since we may take $P=(P_1 \to P_0)$ to be the
beginning of a projective resolution of an $A$-module of infinite
projective dimension. Thus, $\per A$ does not admit a canonical
$t$-structure.
\end{nonexample}


\section{Application to hearts and simple-minded objects}
\label{s:hearts}

Let $k$ be an algebraically closed field, and let $A$ be a dg $k$-algebra such that:
\begin{itemize}
\item[1)] in each degree its homology is of finite dimension,
\item[2)] its homology vanishes for large degrees,
\item[3)] $A$ is {\em homologically smooth}, \ie $A$ is a compact object of the unbounded derived category of dg $A$-$A$-bimodules.
\end{itemize}

\begin{remark}
Note that these conditions are invariant under derived Morita equivalence. The reader can find the proof of the invariance of condition~3) in \cite[Lemma 2.6]{ToenVaquie}.
\end{remark}

\begin{example}
Let $A$ be an ordinary finite-dimensional algebra over a perfect
field $k$. Then $A$ is homologically smooth if and only if it has
finite global dimension. That the finiteness of the global dimension
is necessary already appeared in Cartan-Eilenberg's book
\cite[Proposition IX.7.6]{CartanEilenberg}. That it is a sufficient
condition can be proved by using, for example, the ideas of the
proof of \cite[Lemma 1.5]{HappelSeminaireAlgebre}.
\end{example}

\begin{example}
We can also take $A$ to be the non complete Ginzburg dg algebra associated to a Jacobi-finite quiver with potential \cite{Ginzburg} \cite{KellerYang}. The fact that in this case $A$ satisfies condition~3) has been proved in \cite{KellerDeformedCalabiYauCompletions}. That condition~1) also holds has been proved in \cite{Amiot}.
\end{example}

Following Rickard (unpublished) and Koenig-Liu \cite{KoenigLiu}, we
define a {\em family of simple-minded objects} to be a finite family
$S_{1}\ko\dots\ko S_{r}$ of objects of $\cd_{fd}A$ such that:
\begin{itemize}
\item[a)] $\Hom_{\cd A}(S_{i},S_{j})=\begin{cases}0 & \text{ if }i\neq j,\\ k\cdot \id_{S_{i}}&\text{ if }i=j.\end{cases}$
\item[b)] $\Hom_{\cd A}(S_{i},\Sigma^tS_{j})=0$ for each $t<0$.
\item[c)] $\cd_{fd}A$ is the smallest full triangulated subcategory of $\cd A$ containing the objects $S_{1}\ko\dots\ko S_{r}$.
\end{itemize}

\begin{example}\label{getting families of simple-minded}
Let $t$ be a bounded $t$-structure on $\cd_{fd}A$ whose heart
$\ch(t)$ is a length category with a finite number of isoclasses of
simple objects. Then we can take $S_{1}\ko\dots\ko S_{r}$ to be a
family of representatives of those isoclasses.
\end{example}

Two families $S_{1}\ko\dots\ko S_{r}$ and $S'_{1}\ko\dots\ko S'_{r'}$ of simple-minded objects of $\cd_{fd}A$ are {\em equivalent} if they have the same closure under extensions.

\begin{corollary}\label{bounded t-structures in bijection with simple-minded objects}
Taking representatives of the isoclasses of the simple objects of the heart yields a bijection between:
\begin{itemize}
\item[1)] Bounded $t$-structrures on $\cd_{fd}A$ whose heart is a length category with a finite number of isoclasses of simple objects.
\item[2)] Equivalence classes of families of simple-minded objects of $\cd_{fd}(A)$.
\end{itemize}
\end{corollary}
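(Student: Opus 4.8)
The plan is to show that the assignment of Example~\ref{getting families of simple-minded} --- sending a bounded $t$-structure $t$ to the family $S_{1},\dots,S_{r}$ of representatives of the isoclasses of simple objects of $\ch(t)$ --- is the asserted bijection, and to produce its inverse by the `Koszul dual' recipe of Section~\ref{s:construction-of-t-structures}. First I would check that the assignment lands in 2): for a bounded $t$-structure with length heart the objects $S_{i}$ lie in $\cd_{fd}A$, and Example~\ref{getting families of simple-minded} tells us they satisfy axioms a), b), c) of a family of simple-minded objects. Here axiom a) for $i=j$ uses that $k$ is algebraically closed, so that Schur's lemma forces $\End_{\ch(t)}(S_{i})=k$. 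It then remains to pass to equivalence classes and to exhibit a two-sided inverse.

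For the inverse, start from a family $S_{1},\dots,S_{r}$ of simple-minded objects, set $M=\bigoplus_{i=1}^{r}S_{i}$ and $B=\REnd_{A}(M)$. Homological smoothness (condition 3) gives $\cd_{fd}A\subseteq\per A$, so each $S_{i}$ is perfect; hence, exactly as in Example~\ref{example of positive dg algebra}, the dg algebra $B$ satisfies the hypotheses of Theorem~\ref{theorem:t-structure on perA}. Indeed, axiom b) gives $H^{p}B=0$ for $p<0$, axiom a) gives $H^{0}B\cong\prod_{i=1}^{r}k$, which is semi-simple, and perfectness of the $S_{i}$ together with the finiteness of the homology of $A$ forces each $H^{p}B$ to be finite-dimensional and to vanish for $p\gg 0$, hence finitely generated over $H^{0}B$. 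Theorem~\ref{theorem:t-structure on perA} then endows $\per B$ with a bounded $t$-structure whose heart is a length category with simple objects the indecomposable summands $B_{1},\dots,B_{r}$ of $B$, corresponding under $H^{0}$ to the $S_{i}$.

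Next I would transport this $t$-structure along a triangle equivalence $\per B\iso\cd_{fd}A$. Viewing $M$ as a dg $B$-$A$-bimodule, consider $?\ten_{B}^{\L}M\colon\cd B\to\cd A$, which sends $B$ to $M$. On $\per B=\tria(B)$ it is fully faithful, because the identity $B=\REnd_{A}(M)$ says precisely that it induces isomorphisms $\Hom_{\cd B}(B,\Sigma^{n}B)\iso\Hom_{\cd A}(M,\Sigma^{n}M)$ for all $n$, and a dévissage over $\tria(B)$ extends this to all objects. Its essential image is a triangulated subcategory of $\cd_{fd}A$ (as $M\in\cd_{fd}A$ and the latter is closed under summands) containing $M=\bigoplus S_{i}$, hence is all of $\cd_{fd}A$ by axiom c). Thus $?\ten_{B}^{\L}M$ restricts to an equivalence $\per B\iso\cd_{fd}A$ carrying $B_{i}$ to $S_{i}$, and the transported $t$-structure on $\cd_{fd}A$ is bounded, has a length-category heart, and has the $S_{i}$ as its simple objects.

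Finally I would verify that the two constructions are mutually inverse. Both rest on the fact recalled before the corollary that a bounded $t$-structure is recovered from its heart by closing under extensions and non-negative (resp. non-positive) shifts; in particular it is determined by its heart, and the heart is exactly the closure under extensions of its simple objects. Hence two simple-minded families yield the same $t$-structure if and only if they have the same extension closure, i.e. are equivalent, which makes the map on equivalence classes well defined and injective; and the previous paragraph recovers a given family, up to equivalence, as the simples of the heart it produces, giving surjectivity. The main obstacle is the construction of the inverse, concretely the fact that $?\ten_{B}^{\L}M$ is fully faithful with essential image all of $\cd_{fd}A$, since this is where homological smoothness (ensuring $S_{i}\in\per A$) and axiom c) are essential; once the equivalence $\per B\iso\cd_{fd}A$ is in hand, everything else follows formally from Theorem~\ref{theorem:t-structure on perA} and the fact that bounded $t$-structures are determined by their hearts.
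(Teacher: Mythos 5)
Your proposal is correct and follows essentially the same route as the paper: pass from a bounded $t$-structure to the simples of its heart, and conversely form $B=\REnd_{A}(\bigoplus S_{i})$, apply Theorem~\ref{theorem:t-structure on perA} to $\per B$, transport the resulting $t$-structure along $?\ten^{\L}_{B}S$, and conclude via the fact that a bounded $t$-structure is determined by its heart. The only difference is that you spell out details the paper leaves implicit — notably that $?\ten^{\L}_{B}S$ is fully faithful on $\per B$ with essential image all of $\cd_{fd}A$ (using smoothness to get $S_{i}\in\per A$ and axiom c) for the image) — which is a faithful elaboration, not a different argument.
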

\begin{proof}
{\em First step: from $t$-structures to simple-minded objects.} We have already observed in Example~\ref{getting families of simple-minded} that, from such a $t$-structure on $\cd_{fd}A$, one gets a family of simple-minded objects of $\cd_{fd}A$ by considering the simples of the corresponding heart.

{\em Second step: from simple-minded objects to $t$-structures.} Conversely, let $S_{1}\ko\dots\ko S_{r}$ be a family of simple-minded objects of $\cd_{fd}A$. Put $S=\bigoplus_{i=1}^{r}S_{i}$ and $B=\op{REnd}_{A}(S)$. The adjoint pair
\[\xymatrix{\cd A\ar@<1ex>[d]^{\RHom_{A}(S,?)} \\
\cd B\ar@<1ex>[u]^{?\otimes^\L_{B}S}
}
\]
induces mutually quasi-inverse triangle functors
\[\xymatrix{\cd_{fd}A\ar@<1ex>[d]^{\RHom_{A}(S,?)} \\
\per B.\ar@<1ex>[u]^{?\otimes^\L_{B}S} }
\]
Under these equivalences, the objects $S_{i}$ correspond to the
indecomposable direct summands of $B$ in $\per B$. As noticed in
Example~\ref{example of positive dg algebra}, $B$ satisfies the
hypothesis of Theorem~\ref{theorem:t-structure on perA}. Therefore,
there exists a bounded $t$-structure on $\per B$ whose heart is a
length category such that the indecomposable direct summands of $B$
in $\per B$ are the representatives of the isoclasses of the simple
objects. This $t$-structure is mapped by $?\otimes^\L_{B}S$ to a
bounded $t$-structure on $\cd_{fd}A$ whose heart is a length
category such that the simple-minded objects we started with are the
representatives of the isoclasses of the simple objects.

{\em Third step: the bijection.} By using that a bounded $t$-structure is completely determined by its heart (see for example \cite[Lemma 2.3]{Bridgeland2005}) it is easy to check that steps 1 and 2 define a bijection.
\end{proof}

\begin{corollary}
$S_{1}\ko\dots\ko S_{r}$ and $S'_{1}\ko\dots\ko S'_{r'}$ are two equivalent families of simple-minded objects of $\cd_{fd}A$ if and only if $r=r'$ and, up to reordering, $S_{i}\cong S'_{i}$.
\end{corollary}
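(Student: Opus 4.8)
The plan is to deduce the statement from the bijection established in Corollary~\ref{bounded t-structures in bijection with simple-minded objects}. The essential observation is that, for a family of simple-minded objects $S_{1}\ko\dots\ko S_{r}$, its closure under extensions inside $\cd_{fd}A$ is precisely the heart $\ch(t)$ of the bounded $t$-structure $t$ attached to it in the second step of the proof of that corollary, and that inside this abelian length category the objects $S_{1}\ko\dots\ko S_{r}$ form a complete system of representatives of the isoclasses of simple objects. Granting this, equivalence of two families becomes equality of the associated hearts, and the statement reduces to the fact that the simple objects of an abelian category are determined, up to isomorphism, by the category itself.

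The forward implication is immediate: if $r=r'$ and, after reordering, $S_{i}\cong S'_{i}$ for all $i$, then the two families consist of the same objects up to isomorphism, hence have the same closure under extensions and are equivalent. For the converse, suppose the families are equivalent, so that they share a common closure under extensions $\cc$. First I would show, using the construction recalled above, that $\cc=\ch(t)=\ch(t')$, where $t$ and $t'$ are the $t$-structures attached to $\{S_{i}\}$ and $\{S'_{j}\}$; this rests on the standard fact that, for a bounded $t$-structure with length heart, the heart coincides with the closure under extensions of its simple objects (the heart is closed under extensions and every object of a length heart is a finite iterated extension of simples). Condition~a) in the definition of a simple-minded family guarantees that the $S_{i}$ are pairwise non-isomorphic and have division-ring endomorphism algebras, so that they really are $r$ distinct simple objects of $\cc$; the same holds for the $S'_{j}$.

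Once $\cc$ is identified with a fixed abelian length category carrying both families as its simples, I would invoke the Jordan--H\"older uniqueness of the isoclasses of simple objects: any two complete systems of representatives of the simples of $\cc$ have the same cardinality and agree up to isomorphism and reordering. Applied to $\{S_{i}\}$ and $\{S'_{j}\}$ this yields $r=r'$ and $S_{i}\cong S'_{i}$ after reordering, as required.

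I expect the main obstacle to be the identification of the closure under extensions of a simple-minded family with the heart of its $t$-structure, together with the verification that the family members are exactly the simples of that heart rather than merely simple-looking objects of $\cd_{fd}A$; this is where the work of Corollary~\ref{bounded t-structures in bijection with simple-minded objects} and Theorem~\ref{theorem:t-structure on perA} is genuinely used. After that point the argument is a soft appeal to the uniqueness of composition factors and requires no further computation.
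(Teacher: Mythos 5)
Your proposal is correct and takes essentially the same route as the paper: the paper's proof likewise deduces the statement from Corollary~\ref{bounded t-structures in bijection with simple-minded objects}, observing that two equivalent families are both complete systems of representatives of the isoclasses of the simple objects of the same length heart. Your extra details (identifying the common extension closure with the common heart, and the appeal to uniqueness of simples) are exactly the content implicitly packed into the paper's one-line argument.
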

\begin{proof}
After Corollary \ref{bounded t-structures in bijection with
simple-minded objects}, two equivalent families of simple-minded
objects are families of representatives of the isoclasses of the
simple modules of the same length category.
\end{proof}

\section{Appendix 0: A weight structure for negative dg algebras}\label{a weight structure for negative dg algebras}

Let $B$ be a dg algebra with homology concentrated in non positive degrees. Consider the following full subcategories of $\cd B$:
\begin{itemize}
\item $\cd^{w\leq 0}$, formed by those modules with homology concentrated in non positive degrees,
\item $\cd^{w\geq 0}$, formed by those modules $X$ satisfying $\Hom(X,Y)=0$ for each $Y\in\cd^{w<0}=\Sigma\cd^{w\leq 0}$.
\end{itemize}

\begin{remark}\label{shifts of the regular module in the right wing}
Note that $\Sigma^pB\in\cd^{w\geq 0}$ for each $p\leq 0$.
\end{remark}

The following result is an unbounded analogue of a result
by Bondarko, cf. \S 6 of \cite{Bondarko0704.4003v8}.

\begin{theorem}\label{canonical weight structure for negative dg algebras}
\begin{itemize}
\item[1)] The pair $(\cd^{w\leq 0},\cd^{w\geq 0})$ is a weight structure on $\cd B$.
\item[2)] $\cd^{w\leq 0}$ is the smallest full subcategory $\op{Susp}(B)$ of $\cd B$ containing $B$ and closed under positive shifts, extensions and arbitrary coproducts.
\item[3)] For any object $X$ of $\cd B$ we have $X\cong\op{Mcolim}_{p\geq 0}\sigma_{\geq -p}X$.
\item[4)] For any pair $X$ and $Y$ of objects of $\cd B$ we have
\[\Hom(X,Y)=\op{lim}_{q\geq 0}\op{colim}_{p\geq 0}\Hom(\sigma_{\geq -q}X,\sigma_{\geq -p}Y).
\]
\end{itemize}
\end{theorem}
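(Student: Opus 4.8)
The plan is to recognize the statement as the unbounded analogue of Bondarko's theorem \cite[\S 6]{Bondarko0704.4003v8} that a negative set of compact objects generates a weight structure; here the set is the single compact generator $B$, and it is \emph{negative} precisely because $\Hom_{\cd B}(B,\Sigma^{i}B)=H^{i}(B)=0$ for $i>0$, the homology of $B$ being concentrated in non positive degrees. Writing $\cd^{w\geq n}=\Sigma^{-n}\cd^{w\geq 0}$, so that $\cd^{w\geq 1}$ plays the role of $\ct^{>0}$, I would first dispose of 2) and the easy axioms. The subcategory $\cd^{w\leq 0}$ coincides with the aisle $\cd^{t\leq 0}$ of the canonical $t$-structure of Example~\ref{canonical t-structure for negative dg algebras}; since $B$ has non positive homology, that aisle is, by its construction in Appendix~2, the smallest subcategory containing $B$ and closed under positive shifts, extensions and coproducts, that is $\op{Susp}(B)$, which proves 2) (closure under summands being automatic, as idempotents split). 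Axioms w0) and w1) are then immediate: the homological vanishing defining $\cd^{w\leq 0}$ and the orthogonality defining $\cd^{w\geq 0}$ are each stable under summands, $\cd^{w\leq 0}$ is stable under $\Sigma$, and $\cd^{w\geq n}={}^{\perp}(\cd^{w\leq n-1})$ shrinks as $n$ grows, so $\cd^{w\geq 1}$ is stable under $\Sigma^{-1}$; finally w2), namely $\Hom(\cd^{w\geq 1},\cd^{w\leq 0})=0$, holds since $\cd^{w\geq 1}={}^{\perp}(\cd^{w\leq 0})$ by definition.

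The heart of the proof is axiom w3): for each $X$ I must produce a triangle $\sigma_{>0}(X)\to X\to\sigma_{\leq 0}(X)\to\Sigma\sigma_{>0}(X)$ with $\sigma_{\leq 0}(X)\in\cd^{w\leq 0}$ and $\sigma_{>0}(X)\in\cd^{w\geq 1}={}^{\perp}\cd^{w\leq 0}$. I would construct this by a cellular tower followed by a Milnor colimit, dualizing the third through fifth steps of the proof of Theorem~\ref{canonical weight structure}. The building blocks are the shifted free modules $\Sigma^{-m}B$ with $m\geq 1$, which lie in $\cd^{w\geq 1}$ by Remark~\ref{shifts of the regular module in the right wing} and satisfy $\Hom(\Sigma^{-m}B,X)=H^{m}(X)$. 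Attaching such cells so as to hit, degree by degree, the positive homology of the successive cones, and taking $\sigma_{>0}(X)$ to be the Milnor colimit of the resulting tower, I obtain a map $\sigma_{>0}(X)\to X$ whose cone $\sigma_{\leq 0}(X)$ has homology concentrated in non positive degrees; that $\sigma_{>0}(X)$ stays in $\cd^{w\geq 1}$ follows because ${}^{\perp}\cd^{w\leq 0}$ is closed under coproducts, cones, and hence Milnor colimits, while the negativity $\Hom(B,\Sigma^{>0}B)=0$ is exactly what makes the homological bookkeeping converge. I expect this to be the main obstacle: unlike the $t$-structure truncation, the weight truncation is a \emph{quotient}-type approximation $X\to\sigma_{\leq 0}(X)$, so it is governed by a reflection of $X$ into the suspended (non triangulated) subcategory $\op{Susp}(B)$ rather than by the familiar coreflection $\tau_{\leq 0}$, and establishing its existence is where the real work lies.

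Finally I would deduce 3) and 4) by dualizing Lemma~\ref{maps as lim-colim}. Iterating w3) produces objects $\sigma_{\geq -p}(X)$ together with compatible maps $\sigma_{\geq -p}(X)\to\sigma_{\geq -(p+1)}(X)\to X$. Because $B$ is compact, each functor $H^{n}=\Hom(\Sigma^{-n}B,?)$ commutes with Milnor colimits; and from the truncation triangles $\sigma_{\geq -p}(X)\to X\to\sigma_{\leq -p-1}(X)$ one reads off that $H^{n}(\sigma_{\geq -p}X)\to H^{n}X$ is invertible for all large $p$. Hence the induced morphism $\op{Mcolim}_{p}\sigma_{\geq -p}(X)\to X$ is an isomorphism on every $H^{n}$, hence an isomorphism, which is 3). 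For 4), I would combine 3) with the weight orthogonality $\Hom(\cd^{w\geq -q},\cd^{w\leq -p-1})=0$: for fixed $q$ and $p\geq q$ the triangle for $Y$, together with $\Sigma^{-1}\cd^{w\leq -p-1}=\cd^{w\leq -p-2}$, shows $\Hom(\sigma_{\geq -q}X,\sigma_{\geq -p}Y)\xrightarrow{\sim}\Hom(\sigma_{\geq -q}X,Y)$, so the inner colimit computes $\Hom(\sigma_{\geq -q}X,Y)$; then writing $X=\op{Mcolim}_{q}\sigma_{\geq -q}X$ gives $\Hom(X,Y)=\op{lim}_{q}\Hom(\sigma_{\geq -q}X,Y)$, the relevant $\lim^{1}$-term vanishing exactly as in the proof of Lemma~\ref{maps as lim-colim}. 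Combining the two identifications yields the formula of 4).
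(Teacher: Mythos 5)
Your treatment of parts 2), 3) and 4) is essentially fine: 3) and 4) run along the same lines as the paper's own argument (with the same suppressed $\op{lim}^1$-issues, and a harmless index slip in 4): $\Sigma^{-1}\cd^{w\leq -p-1}=\cd^{w\leq -p}$, not $\cd^{w\leq -p-2}$, so one needs $p\geq q+1$ rather than $p\geq q$), and your route to 2) through Example~\ref{canonical t-structure for negative dg algebras} and Appendix~2 is a legitimate alternative to the paper's explicit construction: Theorem~\ref{AlonsoJeremiasSouto} identifies the right aisle of the $t$-structure generated by $B$ with $\{Y:\Hom(\Sigma^nB,Y)=0\text{ for all }n\geq 0\}=\{Y:H^mY=0\text{ for }m\leq 0\}$, and a long exact sequence argument then shows that any $M$ with $H^{>0}M=0$ has vanishing truncation $M^{t\geq 1}$, which is exactly the assertion of the Example that the paper leaves as "not difficult to prove". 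The genuine problem is at the step you yourself single out as the crux, axiom w3). You justify $\sigma_{>0}(X)\in\cd^{w\geq 1}$ by asserting that ${}^{\perp}(\cd^{w\leq 0})$ is closed under coproducts, cones, and hence Milnor colimits. Closure under cones is \emph{false}: a left orthogonal is closed under extensions and under $\Sigma^{-1}$, but not under $\Sigma$, and the cone of $f:K\to K'$ is an extension of $\Sigma K$ by $K'$. Indeed, were ${}^{\perp}(\cd^{w\leq 0})$ closed under cones, it would be closed under $\Sigma$ (take $K'=0$), hence a localizing subcategory of $\cd B$ containing the compact generator $\Sigma^{-1}B$, hence all of $\cd B$; this would force every object of $\cd^{w\leq 0}$ to be right orthogonal to itself, i.e.\ $\cd^{w\leq 0}=0$, contradicting $B\in\cd^{w\leq 0}$. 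Since a Milnor colimit is by definition a cone on a map between coproducts, the deduction "hence Milnor colimits" collapses, and left orthogonals are in fact not closed under Milnor colimits of arbitrary towers.

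What saves your construction is extra structure that you do not invoke: in your tower $V_0\to V_1\to\cdots$ the cone of each transition map is a coproduct of cells $\Sigma^{-m}B$, $m\geq 1$, which \emph{do} lie in ${}^{\perp}(\cd^{w\leq 0})$. Applying $\Hom(?,Z)$ with $Z\in\cd^{w\leq 0}$ to the Milnor triangle yields an exact sequence
\[
0 \to \op{lim}^1_{p}\Hom(\Sigma V_p, Z) \to \Hom(\op{Mcolim}_{p}V_p, Z) \to \op{lim}_{p}\Hom(V_p,Z) \to 0\ko
\]
whose right-hand term vanishes because each $V_p$ lies in the orthogonal, and whose $\op{lim}^1$-term vanishes by Mittag--Leffler: each restriction map $\Hom(\Sigma V_{p+1},Z)\to\Hom(\Sigma V_p,Z)$ is surjective, since the next term of the long exact sequence is $\Hom(\cone(V_p\to V_{p+1}),Z)=0$. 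So your cellular approach can be repaired, but only by this $\op{lim}^1$ argument, which is precisely the real content hidden behind your false closure claim. The paper sidesteps the issue by working in the category of dg $B$-modules rather than in $\cd B$: it filters $M$ as in \cite[\S 3.1]{Keller1994a} by submodules whose subquotients are coproducts of shifts of $B$, splits each subquotient into its non positive and its positive shifts, and reads off the truncation triangle from an honest short exact sequence $0\to L\to M\to N\to 0$ of dg modules, the orthogonality $\Hom(L,Y)=\op{lim}_n\Hom(L_n,Y)=0$ being secured by the same Mittag--Leffler mechanism applied to the degreewise split filtration of $L$.
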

\begin{proof}
2) It is clear that $B\in\cd^{w\leq 0}$, and that $\cd^{w\leq 0}$ is closed under extensions, positive shifts and arbitrary coproducts. Therefore $\op{Susp}(B)$ is contained in $\cd^{w\leq 0}$. Now, for an object $M$ of $\cd^{w\leq 0}$  we can form a sequence of triangles
\[B_{0}\arr{u} M\arr{v} M_{0}\ra \Sigma B_{0},
\]
\[B_{1}\ra M_{0}\ra M_{1}\ra\Sigma B_{1},
\]
\[\dots
\]
by taking $B_{p}=\coprod_{q\geq p}\coprod_{\Hom(\Sigma^qB,M_{p})}\Sigma^qB$ and defining $B_{p}\ra M_{p}$ as the obvious map. This yields a diagram
\[\xymatrix{M_{0}\ar[r] & M_{1}\ar[r] & M_{2}\ar[r] & \dots \\
M\ar[u]^{v}\ar[r]^{\id} & M\ar[u]\ar[r]^{\id} & M\ar[u]\ar[r] & \dots \\
L_{0}\ar[u]^{u}\ar[r] & L_{1}\ar[u]\ar[r] & L_{2}\ar[u]\ar[r] & \dots
}
\]
where each $L_{p}$ is a $p$-fold extension of coproducts of non negative shifts of $B$. Thanks to Verdier's $3\times 3$ lemma (see \cite[Proposition 1.1.11]{BBD}) we know there exists a triangle
\[L\ra M\ra \Mcolim M_{p}\ra \Sigma L,
\]
where $L$ fits in a triangle of the form
\[\coprod_{p\geq 0}L_{p}\ra L\ra \coprod_{p\geq 0}\Sigma L_{p}\ra \Sigma\coprod_{p\geq 0}L_{p}.
\]
Thus, it is clear that $L\in\op{Susp}(B)$. On the other hand, for each $n\geq 0$ we have
\[\Hom(\Sigma^{n}B,\Mcolim M_{p})=\op{colim}\Hom(\Sigma^nB, \Mcolim M_{p})=0
\]
because the morphisms
\[\Hom(\Sigma^nB,M_{p})\ra\Hom(\Sigma^nB,M_{p+1})
\]
vanish. Thus $\Mcolim M_{p}$ has homology concentrated in degrees $\geq 1$. But, in fact, for each $n\geq 1$ we have an exact sequence
\[H^nM\ra H^n(\Mcolim M)\ra H^{n+1}L,
\]
where $H^nM=0$ by hypothesis and $H^{n+1}L=0$ because $B$ has homology concentrated in non positive degrees. This proves that $\Mcolim M_{p}=0$, and so $M\cong L\in\op{Susp}(B)$.

1) It is clear that $\cd^{w\leq 0}$ and $\cd^{w\geq 0}$ are closed under finite coproducts and direct summands. It is also clear that $\cd^{w\leq 0}$ is closed under positive shifts and $\cd^{w\geq 0}$ is closed under negative shifts. The ortohogonality axiom hols by definition of $\cd^{w\geq 0}$. It remains to prove the existence of a truncation triangle. Let $M$ be an object of $\cd B$. Thanks to \cite[\S 3.1]{Keller1994a} we can assume that $M$ has a filtration
\[0=M_{-1}\subset M_{0}\subset M_{1}\subset\dots\subset M_{n-1}\subset M_{n}\dots\subset M
\]
in the category $\cc B$ of dg $B$-modules such that
\begin{itemize}
\item[F1)] $M=\colim_{n\geq 0}M_{n}$,
\item[F2)] each $M_{n-1}\ra M_{n}$ in an inflation in $\cc B$, \ie it is a degreewise split-injection,
\item[F3)] $M_{n}/M_{n-1}$ is a small coproduct of (positive or negative) shifts of $B$.
\end{itemize}

Using the fact that $B$ has homology concentrated in non positive degrees, we can form a commutative square
\[\xymatrix{L'_{1}\ar[r]\ar[d] & \Sigma L_{0}\ar[d] \\
M_{1}/M_{0}\ar[r] & \Sigma M_{0}
}
\]
where the vertical morphisms are degree-wise split injections and $L'_{1}$ (resp. $L_{0}$) is the direct summand of $M_{1}/M_{0}$ (resp. $M_{0}$) formed by the non positive shifts of $B$. Taking the co-cone $L_{1}$ of $L'_{1}\ra \Sigma L_{0}$ we get a morphism of degree-wise split short exact sequences of dg $B$-modules
\[\xymatrix{L_{0}\ar[r]\ar[d] & L_{1}\ar[r]\ar[d] & L'_{1}\ar[d] \\
M_{0}\ar[r] & M_{1}\ar[r] & M_{1}/M_{0},
}
\]
where the vertical arrows are degree-wise split injections. We write $L'_{1}=L_{1}/L_{0}$. In this way, we can form morphisms of degree-wise split short exact sequences of dg $B$-modules
\[\xymatrix{L_{n-1}\ar[r]\ar[d] & L_{n}\ar[r]\ar[d] & L_{n}/L_{n-1}\ar[d] \\
M_{n-1}\ar[r] & M_{n}\ar[r] & M_{n}/M_{n-1}
}
\]
for each $n\geq 0$, where the vertical arrows are degree-wise split injections and $L_{n}/L_{n-1}$ is the direct summand of $M_{n}/M_{n-1}$ formed by the non positive shifts of $B$. This yields a sequence of degree-wise split short exact sequences of dg $B$-modules
\[\xymatrix{0=L_{-1}\ar[r]\ar[d] & L_{0}\ar[r]\ar[d] & L_{1}\ar[r]\ar[d] & \dots & L_{n-1}\ar[r]\ar[d] & L_{n}\ar[d] & \dots \\
0=M_{-1}\ar[r]\ar[d] & M_{0}\ar[r]\ar[d] & M_{1}\ar[r]\ar[d] & \dots & M_{n-1}\ar[r]\ar[d] & M_{n}\ar[d] & \dots \\
0=N_{-1}\ar[r] & N_{0}\ar[r] & N_{1}\ar[r] & \dots & N_{n-1}\ar[r] & N_{n} & \dots,
}
\]
where for each $n\geq 0$ there is a morphisms of degree-wise split short exact sequences of dg $B$-modules
\[\xymatrix{M_{n-1}\ar[r]\ar[d] & M_{n}\ar[r]\ar[d] & M_{n}/M_{n-1}\ar[d] \\
N_{n-1}\ar[r] & N_{n}\ar[r] & N_{n}/N_{n-1}
}
\]
where the vertical arrows are degree-wise split surjections and $N_{n}/N_{n-1}$ is the direct summand of $M_{n}/M_{n-1}$ formed by the positive shifts of $B$. Write $L=\op{colim}_{n\geq 0}L_{n}$ and $N=\op{colim}_{n\geq 0}N_{n}$. The short exact sequence of dg $B$-modules
\[0\ra L\ra M\ra N\ra 0
\]
induces a triangle
\[L\ra M\ra N\ra \Sigma L
\]
in $\cd B$. Note that $L=\Mcolim_{n\geq 0}L_{n}$ and $N=\Mcolim_{n}N_{n}$. Since $\cd^{w<0}$ is closed under small coproducts, positive shifts and extensions, then $N\in\cd^{w<0}$. On the other hand, if $Y\in\cd^{w<0}$ then
\[\Hom(L,Y)=\op{lim}_{n\geq 0}\Hom(L_{n},Y)=0,
\]
which proves that $L\in\cd^{w\geq 0}$.

3) We can construct a commutative diagram as follows:
\[\xymatrix{\sigma_{\geq 0}X\ar[r]\ar[d]^{f_{0}} & \sigma_{\geq -1}X\ar[r]\ar[d]^{f_{-1}} & \sigma_{\geq -2}X\ar[r]\ar[d]^{f_{-2}} & \dots \\
X\ar@{=}[r]\ar[d] & X\ar@{=}[r]\ar[d] & X\ar@{=}[r]\ar[d] & \dots \\
\sigma_{<0}X\ar[r] & \sigma_{<-1}X\ar[r] & \sigma_{>-2}X\ar[r] & \dots
}
\]
which induces a morphism
\[f:\op{Mcolim}_{p\geq 0}\sigma_{\geq -p}X\ra X.
\]
For each $n\in\Z$ this yields a morphism
\[H^n(f):\op{colim}_{p\geq 0}H^n(\sigma_{\geq -p}X)\ra H^nX
\]
induced by the commutative diagram
\[\xymatrix{H^n(\sigma_{\geq 0}X)\ar[r]\ar[d]^{H^n(f_{0})} & H^n(\sigma_{\geq -1}X)\ar[r]\ar[d]^{H^n(f_{-1})} & H^n(\sigma_{\geq -2}X)\ar[r]\ar[d]^{H^n(f_{-2})} & \dots \\
H^nX\ar@{=}[r]\ar[d] & H^nX\ar@{=}[r]\ar[d] & H^nX\ar@{=}[r]\ar[d] & \dots \\
H^n(\sigma_{<0}X)\ar[r] & H^n(\sigma_{<-1}X)\ar[r] & H^n(\sigma_{>-2}X)\ar[r] & \dots
}
\]
We deduce that $H^n(f)$ is an isomorphism from the fact that almost every map $H^n(f_{-p})\ko p\geq 0$, is an isomorphism.

4) Note that we have
\[\Hom(X,Y)=\Hom(\op{Mcolim}_{q\geq 0}\sigma_{\geq -q}X,Y)=\op{lim}_{q\geq 0}\Hom(\sigma_{\geq -q}X,Y).
\]
Now for a fix $q\geq 0$, we apply $\Hom(\sigma_{\geq -q}X,?)$ to the diagram
\[\xymatrix{\sigma_{\geq 0}Y\ar[r]\ar[d] & \sigma_{\geq -1}Y\ar[r]\ar[d] & \sigma_{\geq -2}Y\ar[r]\ar[d] & \dots \\
Y\ar@{=}[r]\ar[d] & Y\ar@{=}[r]\ar[d] & Y\ar@{=}[r]\ar[d] & \dots \\
\sigma_{<0}Y\ar[r] & \sigma_{<-1}Y\ar[r] & \sigma_{>-2}Y\ar[r] & \dots
}
\]
to get the diagram
\[\xymatrix{\Hom(\sigma_{\geq -q}X,\sigma_{\geq 0}Y)\ar[r]\ar[d] & \Hom(\sigma_{\geq -q}X,\sigma_{\geq -1}Y)\ar[r]\ar[d] & \Hom(\sigma_{\geq -q}X,\sigma_{\geq -2}Y)\ar[r]\ar[d] & \dots \\
\Hom(\sigma_{\geq -q}X,Y)\ar@{=}[r]\ar[d] & \Hom(\sigma_{\geq -q}X,Y)\ar@{=}[r]\ar[d] & \Hom(\sigma_{\geq -q}X,Y)\ar@{=}[r]\ar[d] & \dots \\
\Hom(\sigma_{\geq -q}X,\sigma_{<0}Y)\ar[r] & \Hom(\sigma_{\geq -q}X,\sigma_{<-1}Y)\ar[r] & \Hom(\sigma_{\geq -q}X,\sigma_{>-2}Y)\ar[r] & \dots
}
\]
in which $\Hom(\sigma_{\geq -q}X,\sigma_{<-p}Y)=0$ for $p\gg 0$. Thus the induced morphism
\[\op{colim}_{p\geq 0}\Hom(\sigma_{\geq -q}X,\sigma_{\geq -p}Y)\ra\Hom(\sigma_{\geq -q}X,Y)
\]
is an isomorphism.
\end{proof}

\section{Appendix 1: Milnor colimits versus homotopy colimits}\label{Appendix 1}

Let $\mathbb{D}$ be a triangulated derivator defined on the
$2$-category of small categories (see \cite{CisinskiNeeman2005} and
the notation therein). Let us denote by $e$ the $1$-point category.
For any small category $I$, we will write $p:I\ra e$ to refer to the
unique possible functor. We have an adjoint pair of triangle
functors
\[\xymatrix{\mathbb{D}(e)\ar@<1ex>[d]^{p^*} \\
\mathbb{D}(I)\ar@<1ex>[u]^{p_{!}}
}
\]
and, by definition, if $F\in\mathbb{D}(I)$ we say that $p_{!}F$ is the {\em homotopy colimit} of $F$. Sometimes this will be denoted by $\op{hocolim}F$ or $\Gamma_{!}(F,I)$.

In this Appendix, we will show that if a triangulated category $\cd$
is at the base of a triangulated derivator, then Milnor colimits of
sequences of morphisms of $\cd$ are isomorphic to homotopy colimits.

The key tool will be the diagram functor (see \cite[\S 1.10]{CisinskiNeeman2005}):
\[d_{I}:\mathbb{D}(I)\ra\ul{\Hom}(I^{\text{op}},\mathbb{D}(e))
\]
(sometimes we shall omit the subscript $I$). If $F$ is an object of
$\mathbb{D}(I)$, we say that $d_{I}F$ is the {\em diagram} or {\em
presheaf} associated to $F$. Given a presheaf
$F\in\ul{\Hom}(I^{\text{op}},\mathbb{D}(e))$, we say that an object
$G\in\mathbb{D}(I)$ is {\em lifts} $F$ if $d_{I}(G)$ is isomorphic
to $F$ in $\ul{\Hom}(I^{\text{op}},\mathbb{D}(e))$.

For each $i\in I$, we denote by $?\otimes i:\mathbb{D}(e)\ra\ul{\Hom}(I^{\text{op}},\mathbb{D}(e))$ the left adjoint of the functor $(?)_{i}$ {\em evaluation at $i$}:
\[\xymatrix{\ul{\Hom}(I^{\text{op}},\mathbb{D}(e))\ar@<1ex>[d]^{(?)_{i}} \\
\mathbb{D}(e)\ar@<1ex>[u]^{?\otimes i}
}
\]
For $j\in I$ and $X$ in $\mathbb{D}(e)$, we have the canonical
isomorphism
\[
(X\ten i)_j = \coprod_{\Hom(j,i)} X.
\]
\begin{lemma} For each $i$ in $I$, the triangle
\[
\xymatrix{
\mathbb{D}(I)\ar[r]^-{d_{I}} & \ul{\Hom}(I^{\text{op}},\mathbb{D}(e)) \\
\mathbb{D}(e)\ar[u]^{i_{!}}\ar[ur]_{?\otimes i} &
}
\]
commutes up to a canonical isomorphism.
\end{lemma}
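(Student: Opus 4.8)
The plan is to produce a single canonical natural transformation $X\otimes i \Rightarrow d_I i_! X$ and then to verify that it is invertible objectwise, i.e. after evaluating the two presheaves at each object of $I$. The starting point is the basic compatibility of the diagram functor with evaluation: for every object $j$ of $I$, viewed as a functor $j\colon e\to I$, one has $(d_I F)_j \cong j^* F$, naturally in $F$ and in $j\in I^{\text{op}}$ (this is built into the construction of $d_I$ in \cite[\S 1.10]{CisinskiNeeman2005}). In particular $(d_I i_! X)_j \cong j^* i_! X$. Since a morphism in $\ul{\Hom}(I^{\text{op}},\mathbb{D}(e))$ is invertible precisely when it is invertible at every $j$, it suffices to produce, naturally in $j$, an isomorphism $j^* i_! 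X \cong \coprod_{\Hom(j,i)} X = (X\otimes i)_j$.

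To keep the comparison canonical I would build the map first, rather than assembling objectwise isomorphisms by hand. Using the adjunction $({?}\otimes i) \dashv (?)_i$ together with $(?)_i\circ d_I \cong i^*$, a natural transformation $X\otimes i \Rightarrow d_I i_! X$ is the same datum as a natural transformation $X \Rightarrow (d_I i_! X)_i \cong i^* i_! X$; for the latter I take the unit $\eta\colon X\to i^* i_! X$ of the adjunction $i_!\dashv i^*$. This yields a canonical morphism of presheaves $\theta\colon X\otimes i \to d_I i_! X$, natural in $X$, and by the previous paragraph it remains only to check that $\theta$ is invertible after evaluation at each $j$.

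The evaluation $j^* i_! X$ is computed by base change. Consider the comma square
\[
\xymatrix{
(j\downarrow i)\ar[r]^-{p}\ar[d]_{q} & e\ar[d]^{i} \\
e\ar[r]_{j} & I,
}
\]
which is homotopy exact, so that the derivator supplies a canonical isomorphism $j^* i_! \cong q_! p^*$ (see \cite{CisinskiNeeman2005}). Since $j$ and $i$ both have source $e$, the comma category $(j\downarrow i)$ is discrete with set of objects $\Hom_I(j,i)$; hence $\mathbb{D}(j\downarrow i)\cong\prod_{\Hom(j,i)}\mathbb{D}(e)$, the functor $p^*$ sends $X$ to the constant family with value $X$, and $q_!$ is the coproduct, giving $q_! p^* X \cong \coprod_{\Hom(j,i)} X = (X\otimes i)_j$. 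Unwinding the base-change isomorphism shows that it agrees with $\theta$ at $j$, so $\theta$ is an isomorphism. I expect the hard part to be purely the bookkeeping of variances: one must confirm that the relevant comma category is $(j\downarrow i)$ rather than $(i\downarrow j)$ — so that its components are $\Hom(j,i)$, matching the presheaf convention encoded in $\ul{\Hom}(I^{\text{op}},\mathbb{D}(e))$ and in the formula $(X\otimes i)_j=\coprod_{\Hom(j,i)}X$ — and that the objectwise base-change isomorphisms are natural in $j$, so that they genuinely assemble into the isomorphism of presheaves witnessing that $\theta$ is invertible.
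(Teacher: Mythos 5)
Your proposal is correct and follows essentially the same route as the paper: the heart of both arguments is the base-change axiom (Der4d) applied to the comma square over $e \arr{j} I \larr{i} e$, whose comma category is discrete with objects $\Hom(j,i)$, together with the axiom identifying $p_!\,p^*$ over a discrete category with the coproduct, yielding $(i_!X)_j \cong \coprod_{\Hom(j,i)} X = (X\ten i)_j$. Your one refinement — first building the global comparison map $\theta\colon X\ten i \to d_I i_! X$ from the unit of $i_!\dashv i^*$ via the adjunction $({?}\ten i)\dashv (?)_i$, and only then checking objectwise invertibility — is a cleaner way to handle the assembly step that the paper compresses into ``one checks that these isomorphisms yield a canonical isomorphism,'' and it correctly isolates the variance bookkeeping as the only delicate point.
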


\begin{proof} Recall that by axiom Der4d, for each functor $u: J \to I$
and each object $j$ of $I$, we have a canonical isomorphism
\[
j^* u_! = p_! \, l^* \ko
\]
where the functors are those of the square
\[
 \xymatrix{ j\backslash J \ar[r]^{l} \ar[d]_p & J \ar[d]^u \\
e \ar[r]_j & I}
\]
and $j\backslash J$ is the comma-category of pairs
$(j', u(j') \to j)$. Let us specialize $J$ to $e$ and $u$ to
the inclusion determined by the object $i$ of $I$. Then
we get a canonical isomorphism
\[
j^* i_! = p_! \, p^* \ko
\]
where now $i\backslash J = i\backslash e$ is the discrete
category $\Hom(j,i)$ and $p$ the unique functor $\Hom(j,i) \to e$.
By axiom Der1, the composition $p_! \, p^*$ is the coproduct
composed with the diagonal functor. So for each object $X$ of
$\mathbb{D}(e)$, we get a canonical isomorphism
\[
(i_! X)_j = \coprod_{\Hom(j,i)} X = (X\ten i)_j.
\]
One checks that these isomorphisms yield a canonical
isomorphism as claimed.
\end{proof}

\begin{remark} \label{left adjoint to the evaluation}
For $I^{\text{op}}=\N$ and $n\in\N$, the object
$X\otimes n$ is the presheaf
\[0\ra\dots\ra 0\ra X\arr{\id}X\arr{\id}X\ra\dots,
\]
where the first $X$ appears in position $n$ and by the lemma,
the triangle
\[
\xymatrix{
\mathbb{D}(\N^{\text{op}})\ar[r]^{d_{\N^{\text{op}}}\hspace{0.5cm}} & \ul{\Hom}(\N,\mathbb{D}(e)) \\
\mathbb{D}(e)\ar[u]^{n_{!}}\ar[ur]_{?\otimes n} &
}
\]
commutes up to isomorphism.
\end{remark}

\begin{proposition}\label{LiftingMilnorHocolim}
\begin{itemize}
\item[1)] Given an object $X$ of $\ul{\Hom}(\N,\mathbb{D}(e))$, there exists an object of $\mathbb{D}(\N^{\text{op}})$ which lifts $X$.
\item[2)] Given a morphism $f:X\ra X'$ in $\ul{\Hom}(\N,\mathbb{D}(e))$ there exists a morphism $\tilde{f}:\tilde{X}\ra\tilde{X}'$ in $\mathbb{D}(\N^{\text{op}})$ such that $d_{\N^{\text{op}}}(\tilde{f})$ is isomorphic to $f$.
\item[3)] The homotopy colimit of an object $X$ of $\mathbb{D}(\N^{\text{op}})$ is isomorphic to the Milnor colimit of its associated diagram $d_{\N^{\text{op}}}X$.
\end{itemize}
\end{proposition}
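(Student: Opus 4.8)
The plan is to reduce all three statements to a single distinguished triangle in $\mathbb{D}(\N^{\text{op}})$ assembled from the elementary objects $n_!X_n$, whose diagrams were computed in Remark~\ref{left adjoint to the evaluation}. Write $i^*:\mathbb{D}(\N^{\text{op}})\ra\mathbb{D}(e)$ for the evaluation at $i$, so that $(i_!,i^*)$ is an adjoint pair and $(d_{\N^{\text{op}}}F)_i=i^*F$. For an object $X$ of $\mathbb{D}(\N^{\text{op}})$ with diagram $d_{\N^{\text{op}}}X=(X_0\arr{f_0}X_1\arr{f_1}\dots)$, I would form
\[P=\coprod_{n\geq 0}n_!X_n\ko P'=\coprod_{n\geq 0}(n+1)_!X_n
\]
and define $\phi:P'\ra P$ on the summand $(n+1)_!X_n$ as the difference of two canonical maps: the image of $f_n$ under $(n+1)_!$, landing in the summand $(n+1)_!X_{n+1}$, and the canonical morphism $(n+1)_!X_n\ra n_!X_n$. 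The latter exists because $n\leq n+1$, so by the Lemma and Remark~\ref{left adjoint to the evaluation} one has $(n+1)^*(n_!X_n)=X_n$, and it is the morphism adjoint to $\id_{X_n}$ under $\Hom((n+1)_!X_n,n_!X_n)\cong\Hom(X_n,(n+1)^*n_!X_n)=\Hom(X_n,X_n)$. There is also a counit $\epsilon:P\ra X$, assembled from the adjoints of the identities $\id_{X_n}$.

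The heart of the argument is to show that $P'\arr{\phi}P\arr{\epsilon}X\ra\Sigma P'$ is a triangle. First one checks $\epsilon\circ\phi=0$: on each summand both terms are the morphism $(n+1)_!X_n\ra X$ adjoint to $f_n$, so their difference vanishes. Hence $\epsilon$ factors through the cone $C$ of $\phi$ as some $\bar{\epsilon}:C\ra X$, and it remains to see that $\bar{\epsilon}$ is invertible. Here I would invoke the conservativity of the family $\{i^*\}_{i\geq 0}$ (axiom Der2): applying $i^*$ and using the Lemma with the Remark, the triangle becomes, pointwise,
\[\coprod_{0\leq n<i}X_n\arr{\phi_i}\coprod_{0\leq n\leq i}X_n\ra C_i\ra\Sigma\coprod_{0\leq n<i}X_n\ko
\]
where $\phi_i$ is the finite telescope map. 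This map is a split monomorphism whose cokernel is $X_i$, realized by the projection $i^*\epsilon$; hence each $\bar{\epsilon}_i=i^*\bar{\epsilon}$ is an isomorphism, so $\bar{\epsilon}$ is an isomorphism. This establishes the triangle, and simultaneously proves part~1): given merely a presheaf $Y$, the same recipe (using the transition maps of $Y$) produces $\phi$ and a cone $\tilde{Y}$; the pointwise cokernel maps $\coprod_{n\leq i}Y_n\ra Y_i$ form a morphism of diagrams $\pi:d(P)\ra Y$ vanishing after $d(\phi)$, and since each $q_i:(dP)_i\ra(d\tilde{Y})_i$ is a split epimorphism the comparison maps $\theta_i$ with $\theta_i q_i=\pi_i$ are unique, hence natural in $i$, giving an isomorphism $d_{\N^{\text{op}}}(\tilde{Y})\cong Y$.

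Part~3) is then immediate: applying $p_!$ (a left adjoint, hence preserving coproducts and triangles) and using $p_!\,n_!\cong(p\circ n)_!=\id_{\mathbb{D}(e)}$, which follows from $p\circ n=\id_e$ and pseudofunctoriality of $\mathbb{D}$, turns the triangle into
\[\coprod_{n\geq 0}X_n\arr{p_!\phi}\coprod_{n\geq 0}X_n\ra p_!X\ra\Sigma\coprod_{n\geq 0}X_n\ko
\]
where $p_!\phi$ has $n$-th component $f_n-\id$, i.e.\ $p_!\phi=\sigma-\id$ for the Milnor shift $\sigma$. Since the cone of $\sigma-\id$ agrees with that of $\id-\sigma$, we obtain $p_!X\cong\Mcolim_{n\geq 0}X_n$. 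For part~2), a morphism $f:Y\ra Y'$ induces, by functoriality of the $n_!$, morphisms $P'_Y\ra P'_{Y'}$ and $P_Y\ra P_{Y'}$ commuting with $\phi,\phi'$; any fill-in $\tilde{f}$ on the cones supplied by the triangulated structure of $\mathbb{D}(\N^{\text{op}})$ is compatible with the counits, so using the naturality established in part~1) one checks that $d_{\N^{\text{op}}}(\tilde{f})$ corresponds to $f$ under the identifications $d(\tilde{Y})\cong Y$ and $d(\tilde{Y}')\cong Y'$.

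I expect the main obstacle to be the verification that the cone $C$ of $\phi$ genuinely has the prescribed diagram: this is precisely where one must combine the explicit description of $d(n_!X_n)$ from the Lemma, the conservativity axiom Der2, and the elementary but essential fact that the finite telescope maps $\phi_i$ are split monomorphisms, so that their cones can be identified with the $X_i$ compatibly with the transition maps.
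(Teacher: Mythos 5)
Your proposal is correct and follows essentially the same route as the paper's proof: you construct the same telescope morphism between $\coprod_{n}(n+1)_{!}X_{n}$ and $\coprod_{n}n_{!}X_{n}$, identify its cone pointwise using the Lemma ($d\circ n_{!}\cong ?\otimes n$), the split-monomorphism/cokernel comparison and the conservativity axiom, and then apply the triangle functor $p_{!}$ together with $p_{!}n_{!}\cong\id$ to recover the Milnor triangle --- exactly the paper's construction for parts 1) and 3), and the same fill-in argument for part 2). The only differences are presentational: the paper frames part 1) as lifting a projective resolution in the pointwise-split exact category $\ul{\Hom}(\N,\mathbb{D}(e))$ of global dimension $1$, whereas you build the map directly and unify parts 1) and 3) around a single distinguished triangle (also making explicit the naturality of the pointwise isomorphisms, which the paper leaves implicit).
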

\begin{proof}
1) {\em Step 1: an exact category with global dimension $1$.} Every additive category can be endowed with an exact structure by taking as conflations the split exact pairs (see \cite{KellerDCU} and the terminology therein). Let us consider $\mathbb{D}(e)$ as an exact category in this way, and let us regard $\ul{\Hom}(\N,\mathbb{D}(e))$ as an exact category with the pointwise split exact structure. Let us calculate a projective resolution of an arbitrary object of this category. Given an object $X$ of $\ul{\Hom}(\N,\mathbb{D}(e))$, \ie a sequence of morphisms in $\mathbb{D}(e)$
\[X_{0}\arr{x_{0}}X_{1}\arr{x_{1}}X_{2}\arr{x_{2}}\dots,
\]
we can start the projective resolution by considering the deflation
\[P_{0}=\coprod_{n\in N}X_{n}\otimes n\ra X
\]
defined by using the counit of the adjunctions $(?\otimes n,(?)_{n})$. It turns out that $\ul{\Hom}(\N,\mathbb{D}(e))$ has global dimension $1$. Indeed, in the kernel $P_{1}\arr{u}P_{0}$ of the former deflation we can take
\[P_{1}=\coprod_{n\in\N}X_{n}\otimes(n+1),
\]
which is a projective object. An explicit diagram might help
\[\xymatrix{0\ar[r]\ar[d] & X_{0}\ar[r]\ar[d]^{\scriptsize{\left[\begin{array}{c}\id \\ -x_{0}\end{array}\right]}} & X_{0}\oplus X_{1}\ar[r]\ar[d]^{\scriptsize{\left[\begin{array}{cc}\id & 0 \\ -x_{0} & \id \\ 0 & -x_{1}\end{array}\right]}} & \dots \\
X_{0}\ar[r]\ar[d] & X_{0}\oplus X_{1}\ar[r]\ar[d]^{\scriptsize{\left[\begin{array}{cc}x_{0}&\id\end{array}\right]}} & X_{0}\oplus X_{1}\oplus X_{2}\ar[r]\ar[d]^{\scriptsize{\left[\begin{array}{ccc}x_{1}x_{0}&x_{1}&\id\end{array}\right]}} & \dots \\
X_{0}\ar[r]^{x_{0}} & X_{1}\ar[r]^{x_{1}} & X_{2}\ar[r] & \dots
}
\]

{\em Step 2: lifting a projective resolution along the diagram functor.} Put
\[\tilde{P}_{1}=\coprod_{n\in\N}(n+1)_{!}(X_{n})
\]
and
\[\tilde{P}_{0}=\coprod_{n\in\N}n_{!}(X_{n}).
\]
For each $n\in\N$, let $a_{n}\in\Hom_{\mathbb{D}(\N^{\text{op}})}((n+1)_{!}X,n_{!}X)$ be the image of the identity $\id_{n_{!}(X_{n})}$ by the composition of the morphisms
\begin{align}
\Hom_{\mathbb{D}(\N^{\text{op}})}(n_{!}(X_{n}),n_{!}(X_{n}))\arr{\sim}\Hom_{\mathbb{D}(e)}(X_{n},n^*n_{!}(X_{n})) \nonumber \\
\ra\Hom_{\mathbb{D}(e)}(X_{n},(n+1)^*n_{!}(X_{n})) \nonumber \\
\arr{\sim}\Hom_{\mathbb{D}(\N^{\text{op}})}((n+1)_{!}(X_{n}),n_{!}(X_{n})) \nonumber
\end{align}
induced by the adjoint pairs $(n_{!},n^*)$ and $((n+1)_{!},(n+1)^*)$ and the $2$-arrow
\[\xymatrix{\mathbb{D}(e) && \hspace{0.5cm}\mathbb{D}(\N^{\text{op}})\lltwocell<5>_{(n+1)^*}^{n^*}{\hspace{1.7cm}(\alpha^{n+1}_{n})^*}.
}
\]
coming from the only possible $2$-arrow
\[\xymatrix{e\rrtwocell<5>^{n+1}_{n}{\hspace{0.65cm}\alpha^{n+1}_{n}} && \N^{\text{op}}.
}
\]
Consider now the morphism
\[\tilde{P}_{1}\arr{\tilde{u}} \tilde{P}_{0}
\]
in $\mathbb{D}(\N^{\text{op}})$ determined by
\[\xymatrix{
\tilde{P}_{1}\ar[rrrr]^{\tilde{u}} &&&& \tilde{P}_{0} \\
(n+1)_{!}(X_{n})\ar[u]\ar[rrrr]^{\scriptsize{\left[\begin{array}{cc}a_{n} & -(n+1)_{!}(x_{n})\end{array}\right]^{t}}\hspace{1cm}} &&&& n_{!}(X_{n})\oplus(n+1)_{!}(X_{n+1})\ar[u]
}
\]
Remark \ref{left adjoint to the evaluation} tells us that the diagram functor $d_{\N^{\text{op}}}$ sends $\tilde{u}$ to $u:P_{1}\ra P_{0}$.

{\em Step 3: a triangle over the lifted morphism.} Now consider a triangle
\[\tilde{P}_{1}\arr{\tilde{u}}\tilde{P}_{0}\ra \tilde{X}\ra \Sigma\tilde{P}_{1}
\]
in $\mathbb{D}(\N^{\text{op}})$. For each $m\in\N$, after applying the triangle functor $m^*:\mathbb{D}(\N^{\text{op}})\ra\mathbb{D}(e)$ we get a triangle
\[\xymatrix{
\bigoplus_{n=0}^{m-1}X_{n}\ar[r]^{u_{m}}  & \bigoplus_{n=0}^mX_{n}\ar[r] & d_{\N^{\text{op}}}(\tilde{X})_{m}\ar[r] & \Sigma \bigoplus_{n=0}^{m-1}X_{n}
}
\]
in $\mathbb{D}(e)$. Since $u_{m}$ is a section, $d_{\N^{\text{op}}}(\tilde{X})_{m}$ is the cokernel of $u_{m}$ and so $d_{\N^{\text{op}}}(\tilde{X})_{m}\cong X_{m}$.

2) Given a morphism $f:X\ra X'$ in $\ul{\Hom}(\N,\mathbb{D}(e))$, we can consider as before the projective resolutions
\[P_{1}\arr{u}P_{0}\ra X
\]
and
\[P'_{1}\arr{u'}P'_{0}\ra X'.
\]
By using $f:X\ra X'$ we can define a morphism $g:P_{0}\ra P'_{0}$ making commutative the square
\[\xymatrix{P_{0}\ar[r]\ar[d]^g & X\ar[d]^f \\
P'_{0}\ar[r] & X
}
\]
and the universal property of the cokernel guarantees the existence of a morphism of conflations
\[\xymatrix{P_{1}\ar[r]^{u}\ar[d]^{h} & P_{0}\ar[r]\ar[d]^g & X\ar[d]^f \\
P'_{1}\ar[r]^{u'} & P'_{0}\ar[r] & X
}
\]
Thanks to Remark \ref{left adjoint to the evaluation}, we can prove that there exists a commutative square
\[\xymatrix{\tilde{P}_{1}\ar[r]^{\tilde{u}}\ar[d]^{\tilde{h}} & \tilde{P}_{0}\ar[d]^{\tilde{g}} \\
\tilde{P}'_{1}\ar[r]^{\tilde{u}'} & \tilde{P}'_{0}
}
\]
in $\mathbb{D}(\N^{\text{op}})$ which is mapped to
\[\xymatrix{P_{1}\ar[r]^{u}\ar[d]^{h} & P_{0}\ar[d]^g\\
P'_{1}\ar[r]^{u'} & P'_{0}
}
\]
by $d_{\N^{\text{op}}}$. The commutative square in $\mathbb{D}(\N^{\text{op}})$ can be completed to a morphism of triangles
\[\xymatrix{
\tilde{P}_{1}\ar[r]^{\tilde{u}}\ar[d]^{\tilde{h}} & \tilde{P}_{0}\ar[d]^{\tilde{g}}\ar[r] & \tilde{X}\ar[r]\ar[d]^{\tilde{f}} & \Sigma\tilde{P}_{1}\ar[d] \\
\tilde{P}'_{1}\ar[r]^{\tilde{u}'} & \tilde{P}'_{0}\ar[r] & \tilde{X}'\ar[r] & \Sigma\tilde{P}'_{1}
}
\]
For each $m\in\N$, we apply the triangle functor $m^*$ and obtain a morphism of triangles
\[\xymatrix{\bigoplus_{n=0}^{m-1}X_{n}\ar[r]^{u_{m}}\ar[d] & \bigoplus_{n=0}^mX_{n}\ar[r]\ar[d] & (d_{\N^{\text{op}}}\tilde{X})_{m}\ar[d]\ar[r] & \Sigma \bigoplus_{n=0}^{m-1}X_{n}\ar[d] \\
\bigoplus_{n=0}^{m-1}X'_{n}\ar[r]^{u'_{m}} & \bigoplus_{n=0}^mX'_{n}\ar[r] & (d_{\N^{\text{op}}}\tilde{X}')_{m}\ar[r] & \Sigma \bigoplus_{n=0}^{m-1}X'_{n}
}
\]
Since both $u_{m}$ and $u'_{m}$ are sections, $(d_{\N^{\text{op}}}\tilde{X})_{m}$ is the cokernel of $u_{m}$ and $(d_{\N^{\text{op}}}\tilde{X}')_{m}$ is the cokernel of $u'_{m}$. Thus, $d_{\N^{\text{op}}}(\tilde{f})_{m}$ is isomorphic to $f_{m}$.

3) Given an object $X\in\mathbb{D}(\N^{\text{op}})$ we consider a triangle
\[Y\ra \coprod_{n\in\N}n_{!}n^*X\arr{\varepsilon}X\ra\Sigma Y
\]
where $\varepsilon$ is defined by using the counit of the adjunctions $(n_{!},n^*)$.
For each $n\in\N$, let $a_{n}\in\Hom_{\mathbb{D}(\N^{\text{op}})}((n+1)_{!}n^*X,n_{!}n^*X)$ be the image of the identity $\id_{n_{!}n^*X}$ by the composition of the morphisms
\begin{align}
\Hom_{\mathbb{D}(\N^{\text{op}})}(n_{!}n^*X,n_{!}n^*X)\arr{\sim}\Hom_{\mathbb{D}(e)}(n^*X,n^*n_{!}n^*X) \nonumber \\
\ra\Hom_{\mathbb{D}(e)}(n^*X,(n+1)^*n_{!}n^*X) \nonumber \\
\arr{\sim}\Hom_{\mathbb{D}(\N^{\text{op}})}((n+1)_{!}n^*X,n_{!}n^*X) \nonumber
\end{align}
induced by the adjoint pairs $(n_{!},n^*)$ and $((n+1)_{!},(n+1)^*)$ and the $2$-arrow
\[\xymatrix{\mathbb{D}(e) && \hspace{0.5cm}\mathbb{D}(\N^{\text{op}})\lltwocell<5>_{(n+1)^*}^{n^*}{\hspace{1.7cm}(\alpha^{n+1}_{n})^*}.
}
\]
coming from the only possible $2$-arrow
\[\xymatrix{e\rrtwocell<5>^{n+1}_{n}{\hspace{0.65cm}\alpha^{n+1}_{n}} && \N^{\text{op}}.
}
\]
Consider the morphism
\[\coprod_{n\in\N}(n+1)_{!}n^*X\arr{u}\coprod_{n\in\N}n_{!}n^*X
\]
described by
\[\xymatrix{\coprod_{n\in\N}(n+1)_{!}n^*X\ar[rr]^{u} && \coprod_{n\in\N}n_{!}n^*X \\
(n+1)_{!}n^*X\ar[u]\ar[rr]^{\scriptsize{\left[\begin{array}{cc} a_{n} & -x_{n}\end{array}\right]^{t}}\hspace{1.3cm}} && n_{!}n^*X\oplus (n+1)_{!}(n+1)^*X\ar[u]
}
\]
Since the composition $\varepsilon u$ vanishes, there exists a morphism $\varphi$ making commutative the diagram
\[\xymatrix{& \coprod_{n\in\N}(n+1)_{!}n^*X\ar[d]^{u}\ar[dl]_{\varphi}\ar[dr]^{0} && \\
Y\ar[r] & \coprod_{n\in\N}n_{!}n^*X\ar[r]^{\hspace{0.5cm}\varepsilon} & X\ar[r] & \Sigma Y
}
\]
For each $m\in\N$, after applying the triangle functor $m^*$ we get a triangle
\[\xymatrix{
m^*Y\ar[r] & \coprod_{n\in\N}m^*n_{!}n^*X\ar[r]^{\hspace{0.8cm}m^*\varepsilon} & m^*X\ar[r] & \Sigma m^*Y
}
\]
By using Remark \ref{left adjoint to the evaluation} we know that
\[\coprod_{n\in\N}m^*n_{!}n^*X=\bigoplus_{n=0}^{m}n^*X,
\]
and it is easy to check that the $n$th composite of the morphism $m^*\varepsilon$ is the morphism
\[(\alpha^m_{n})^*:n^*X\ra m^*X
\]
given by the unique $2$-arrow
\[\xymatrix{e\rrtwocell<5>^{m}_{n}{\hspace{0.3cm}\alpha^m_{n}} && \N^{\text{op}}.
}
\]
Thus, $m^*\varepsilon$ is a section, with retraction given by
\[\scriptsize{\left[\begin{array}{cccc}0 & \dots & 0 & \id\end{array}\right]^t}:m^*X\ra\bigoplus_{n=0}^{m}n^*X.
\]
>From this, we deduce that the morphism
\[m^*Y\ra \bigoplus_{n=0}^{m}n^*X
\]
is the kernel of $m^*\varepsilon$. On the other hand, it is easy to check that the kernel of $m^*\varepsilon$ is $m^*u$. Therefore, $m^*\varphi$ is an isomorphism for each $m\in\N$, and the conservative axiom of derivators (see \cite[Definition 1.11]{CisinskiNeeman2005}) says that $\varphi$ is an isomorphism. Finally, if we apply the triangle functor $\op{hocolim}$ to the triangle
\[\xymatrix{\coprod_{n\in\N}(n+1)_{!}n^*X\ar[r]^{\hspace{0.3cm}u} & \coprod_{n\in\N}n_{!}n^*X\ar[r]^{\hspace{0.75cm}\varepsilon} & X\ar[r] & \Sigma \coprod_{n\in\N}(n+1)_{!}n^*X
}
\]
we get the triangle
\[\xymatrix{\coprod_{n\in\N}n^*X\ar[r]^{\id-\sigma} & \coprod_{n\in\N}n^*X\ar[r] & \op{hocolim}X\ar[r] & \Sigma \coprod_{n\in\N}n^*X.
}
\]
The $n$th composite of $\sigma$ is the composition
\[\xymatrix{
n^*X\ar[rr]^{(\alpha^{n+1}_{n})^*\hspace{1.8cm}} && (n+1)^*X\ra\coprod_{n\in\N}n^*X,
}
\]
where $\alpha^{n+1}_{n}$ is the only possible $2$-arrow
\[\xymatrix{e\rrtwocell<5>^{n+1}_{n}{\hspace{0.5cm}\alpha^{n+1}_{n}} && \N^{\text{op}}.
}
\]
Therefore,
\[\op{hocolim}X\cong\op{Mcolim}d_{\N^{\text{op}}}X.
\]
\end{proof}

If $X$ is an object of $\ul{\Hom}(\N,\mathbb{D}(e))$ given by
\[X_{0}\arr{x_{0}} X_{1}\arr{x_{1}} X_{2}\ra \dots,
\]
we denote by $\Sigma X$ the object $\ul{\Hom}(\N,\mathbb{D}(e))$ given by
\[\Sigma X_{0}\arr{\Sigma x_{0}} \Sigma X_{1}\arr{\Sigma x_{1}} \Sigma X_{2}\ra \dots
\]

If $\cd$ is a triangulated category and $f:X\ra Y$ is a morphism in the category $\ul{\Hom}(\N,\cd)$:
\[\xymatrix{X_{0}\ar[r]\ar[d]^{f_{0}} & X_{1}\ar[r]\ar[d]^{f_{1}} & X_{2}\ar[r]\ar[d]^{f_{2}} & \dots \\
Y_{0}\ar[r] & Y_{1}\ar[r] & Y_{2}\ar[r] & \dots,
}
\]
we write $\op{Mcolim}f$ to refere to a morphism which fits in a morphism of triangles
\[\xymatrix{\coprod_{n\in\N}X_{n}\ar[r]^{\id-\sigma}\ar[d]^{\coprod_{n\in\N}f_{n}} & \coprod_{n\in\N}X_{n}\ar[r]\ar[d]^{\coprod_{n\in\N}f_{n}} & \op{Mcolim}X_{n}\ar[r]\ar[d]^{\op{Mcolim}f} & \Sigma\coprod_{n\in\N}X_{n}\ar[d] \\
\coprod_{n\in\N}Y_{n}\ar[r]^{\id-\sigma} & \coprod_{n\in\N}Y_{n}\ar[r] & \op{Mcolim}Y_{n}\ar[r] & \Sigma\coprod_{n\in\N}Y_{n}
}
\]

\begin{corollary}\label{Milnor colimits are quasi exact}
Let
\[X\arr{f}Y\ra Z\ra \Sigma X
\]
be a diagram in $\ul{\Hom}(\N,\mathbb{D}(e))$ such that for each $n\in\N$ the corresponding diagram
\[X_{n}\arr{f_{n}}Y_{n}\ra Z_{n}\ra \Sigma X_{n}
\]
is a triangle in $\mathbb{D}(e)$. There exists a triangle
\[\xymatrix{
\op{Mcolim}X\ar[rr]^{\op{Mcolim}f} && \op{Mcolim}Y\ar[r] & \op{Mcolim}Z'\ar[r] & \Sigma\op{Mcolim}X
}
\]
in $\mathbb{D}(e)$, where $Z'$ is an object of $\ul{\Hom}(\N,\mathbb{D}(e))$ such that $Z'_{n}\cong Z_{n}$ for each $n\in\N$.
\end{corollary}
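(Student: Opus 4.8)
The plan is to transport the whole situation into the derivator $\mathbb{D}$, where the homotopy colimit functor $\op{hocolim}=p_{!}$ is a genuine triangle functor, and then to translate back using the identifications of Proposition~\ref{LiftingMilnorHocolim}. Milnor colimits fail to be exact precisely because they are assembled by hand from a single Milnor triangle, whereas $\op{hocolim}$ is exact by construction; so the idea is to realize the given pointwise triangle as the image, under the diagram functor $d_{\N^{\text{op}}}$, of an honest triangle in $\mathbb{D}(\N^{\text{op}})$ and then apply $\op{hocolim}$.

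First I would use part~2) of Proposition~\ref{LiftingMilnorHocolim} to lift the morphism $f:X\ra Y$ of $\ul{\Hom}(\N,\mathbb{D}(e))$ to a morphism $\tilde{f}:\tilde{X}\ra\tilde{Y}$ in $\mathbb{D}(\N^{\text{op}})$ with $d_{\N^{\text{op}}}(\tilde{f})\cong f$. Next I would embed $\tilde{f}$ into a triangle
\[\tilde{X}\arr{\tilde{f}}\tilde{Y}\ra\tilde{Z}\ra\Sigma\tilde{X}\]
of $\mathbb{D}(\N^{\text{op}})$ and set $Z'=d_{\N^{\text{op}}}(\tilde{Z})$, an object of $\ul{\Hom}(\N,\mathbb{D}(e))$. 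To see that $Z'_{n}\cong Z_{n}$, I would apply, for each $m\in\N$, the triangle functor $m^*:\mathbb{D}(\N^{\text{op}})\ra\mathbb{D}(e)$, obtaining a triangle $X_{m}\arr{f_{m}}Y_{m}\ra Z'_{m}\ra\Sigma X_{m}$ (using $m^*\tilde{X}\cong X_{m}$, $m^*\tilde{Y}\cong Y_{m}$ and $m^*(\tilde{f})\cong f_{m}$). Comparing with the given triangle $X_{m}\arr{f_{m}}Y_{m}\ra Z_{m}\ra\Sigma X_{m}$ on the same morphism $f_{m}$, the uniqueness of cones up to (non-canonical) isomorphism in $\mathbb{D}(e)$ yields $Z'_{m}\cong Z_{m}$. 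Applying $\op{hocolim}$ to the triangle in $\mathbb{D}(\N^{\text{op}})$ and using that it is a triangle functor produces a genuine triangle
\[\op{hocolim}\tilde{X}\ra\op{hocolim}\tilde{Y}\ra\op{hocolim}\tilde{Z}\ra\Sigma\op{hocolim}\tilde{X}\]
in $\mathbb{D}(e)$, whose three terms are identified with $\op{Mcolim}X$, $\op{Mcolim}Y$ and $\op{Mcolim}Z'$ by part~3) of Proposition~\ref{LiftingMilnorHocolim}.

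The hard part will be to check that, under these identifications, the first map $\op{hocolim}(\tilde{f})$ really is a legitimate choice of $\op{Mcolim}f$, and not merely some map between the correct objects. For this I would revisit the construction in the proof of part~3): the defining triangle
\[\coprod_{n\in\N}(n+1)_{!}n^*W\arr{u}\coprod_{n\in\N}n_{!}n^*W\arr{\varepsilon}W\ra\Sigma\coprod_{n\in\N}(n+1)_{!}n^*W\]
is natural in $W\in\mathbb{D}(\N^{\text{op}})$, its maps being built from the counits of the adjunctions $(n_{!},n^*)$ and the canonical $2$-arrows $\alpha^{n+1}_{n}$. Hence $\tilde{f}$ induces a morphism between the instances of this triangle for $W=\tilde{X}$ and $W=\tilde{Y}$, and applying $\op{hocolim}$ — together with $\op{hocolim}(n_{!}n^*\tilde{X})=n^*\tilde{X}\cong X_{n}$ and likewise for the $(n+1)_{!}$ term — turns it into a morphism of Milnor triangles whose first two vertical components are $\coprod_{n}f_{n}$. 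Since part~3) also identifies $\sigma$ with the structure maps of the presheaf, this is exactly the data defining $\op{Mcolim}f$, so $\op{hocolim}(\tilde{f})$ qualifies as $\op{Mcolim}f$ and the triangle above has the claimed form.
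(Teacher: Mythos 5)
Your proposal is correct and follows essentially the same route as the paper: lift $f$ to $\tilde{f}$ in $\mathbb{D}(\N^{\text{op}})$ via part~2) of Proposition~\ref{LiftingMilnorHocolim}, complete it to a triangle, identify the fibers of the cone by applying the functors $m^*$, and then apply the triangle functor $\op{hocolim}$ together with the Milnor/homotopy colimit comparison of part~3). In fact you are slightly more careful than the paper, which simply asserts that $\op{hocolim}(\tilde{f})$ may be taken as $\op{Mcolim}f$; your naturality argument for the defining triangle of part~3) supplies exactly the justification that step needs.
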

\begin{proof}
Part~2) of Proposition \ref{LiftingMilnorHocolim} tells us that there exists a morphism $\tilde{f}:\tilde{X}\ra\tilde{Y}$ in $\mathbb{D}(\N^{\text{op}})$ such that $d_{\N^{\text{op}}}(\tilde{f})=f$. Let us complete this morphism to a triangle
\[\tilde{X}\arr{\tilde{f}}\tilde{Y}\ra\tilde{Z}\ra\Sigma\tilde{X}
\]
in $\mathbb{D}(\N^{\text{op}})$. For a natural number $n\in\N$ the triangle functor $n^*$ sends this triangle to a triangle
\[X_{n}\arr{f_{n}}Y_{n}\ra n^*\tilde{Z}\ra\Sigma X_{n},
\]
which proves that $n^*\tilde{Z}\cong Z_{n}$. On the other hand, by using part~1) of Proposition \ref{LiftingMilnorHocolim} we get that the triangle functor $\op{hocolim}$ sends the triangle in $\mathbb{D}(\N^{\text{op}})$ to a triangle
\[\xymatrix{\op{Mcolim}X\ar[rr]^{\op{Mcolim}f} && \op{Mcolim}Y\ar[r] & \op{hocolim}\tilde{Z}\ar[r] & \Sigma\op{Mcolim}X.
}
\]
Finally, part~3) of Proposition \ref{LiftingMilnorHocolim} tells us that
\[\op{hocolim}\tilde{Z}\cong\op{Mcolim}d_{\N^{\text{op}}}(\tilde{Z}).
\]
\end{proof}

\section{Appendix 2: From compact objects to $t$-structures}

It is well known that from a set $\cs$ of compact objects of a triangulated category $\cd$ with small coproducts one can
produce in a natural way an interesting $t$-structure $t_{\cs}$. For example, in \cite[Theorem III.2.3]{BeligiannisReiten},
it is proved that if $\cy_{\cs}$ is the full subcategory of $\cd$ formed by those objects $Y$ such that
$\Hom_{\cd}(\Sigma^nS,Y)=0$ for each $n\geq 0$ and each $S\in\cs$, then $\cy_{\cs}$ is the right aisle of a $t$-structure. In fact, this can be deduced from \cite[Theorem A.1]{AlonsoJeremiasSouto2003}. For the convenience of the reader we will include here the statement and the proof of that theorem:

\begin{theorem}\label{AlonsoJeremiasSouto}
Let $\cd$ be a triangulated category with small coproducts, and let $\cs$ be a set of compact objects of $\cd$. Then:
\begin{itemize}
\item[1)] the smallest full subcategory $\op{Susp}_{\cd}(\cs)$ of $\cd$ containing $\cs$ and closed under extensions, positive shifts and small coproducts is a left aisle,
\item[2)] every object $X$ of $\op{Susp}_{\cd}(\cs)$ fits in a triangle
\[
\coprod_{i\geq 0}X_{i}\ra X\ra \coprod_{i\geq 0}\Sigma X_{i}\ra \coprod_{i\geq 0}\Sigma X_{i}
\]
where $X_{i}$ is an $i$-fold extension of small coproducts of non negative shifts of objects of $\cs$.
\end{itemize}
\end{theorem}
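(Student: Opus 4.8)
The plan is to deduce part~1) from the construction of a coreflection: for every $X\in\cd$ I will produce a triangle $U(X)\ra X\ra V(X)\ra\Sigma U(X)$ with $U(X)\in\op{Susp}_{\cd}(\cs)$ and $V(X)$ right orthogonal to $\op{Susp}_{\cd}(\cs)$. The subcategory $\op{Susp}_{\cd}(\cs)$ obviously contains a zero object and is closed under $\Sigma$ and under extensions, so such triangles show that the inclusion $\op{Susp}_{\cd}(\cs)\hookrightarrow\cd$ admits a right adjoint, whence $\op{Susp}_{\cd}(\cs)$ is a left aisle (cf. \cite[\S 1]{KellerVossieck1988b}). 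Since $\Hom_{\cd}(?,Y)$ is homological and turns coproducts into products, an object $Y$ lies in $\op{Susp}_{\cd}(\cs)^{\perp}$ if and only if $\Hom_{\cd}(\Sigma^nS,Y)=0$ for all $S\in\cs$ and all $n\geq 0$; that is, the right orthogonal is exactly the category $\cy_{\cs}$ from the beginning of this appendix.

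For the coreflection I would use the cell-attachment (small object) argument. Set $W_0=X$ and, given $W_p$, form $E_p=\coprod_{n\geq 0}\coprod_{f\colon\Sigma^nS\ra W_p}\Sigma^nS$ together with its evaluation map $E_p\arr{e_p}W_p$, and define $W_{p+1}$ by the triangle $E_p\arr{e_p}W_p\ra W_{p+1}\ra\Sigma E_p$. Every $f\colon\Sigma^nS\ra W_p$ with $n\geq 0$ factors through $e_p$, hence maps to $0$ in $W_{p+1}$, so each transition map $\Hom_{\cd}(\Sigma^nS,W_p)\ra\Hom_{\cd}(\Sigma^nS,W_{p+1})$ vanishes for $n\geq 0$. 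Put $V(X)=\op{Mcolim}_{p\geq 0}W_p$. As the objects of $\cs$ are compact, the functors $\Hom_{\cd}(\Sigma^nS,?)$ send this Milnor colimit to the colimit of the groups $\Hom_{\cd}(\Sigma^nS,W_p)$ (exactly as in the proof of Theorem~\ref{canonical weight structure}), and that colimit vanishes for $n\geq 0$ because the transition maps are zero; thus $V(X)\in\cy_{\cs}=\op{Susp}_{\cd}(\cs)^{\perp}$.

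It remains to identify the fibre of $X\ra V(X)$. Let $U_p$ be the fibre of $W_0\ra W_p$; then $U_0=0$, and the octahedral axiom applied to $W_0\ra W_p\ra W_{p+1}$ gives triangles $U_p\ra U_{p+1}\ra E_p\ra\Sigma U_p$, so each $U_p$ is a $p$-fold iterated extension of small coproducts of non-negative shifts of objects of $\cs$; in particular $U_p\in\op{Susp}_{\cd}(\cs)$. The triangles $U_p\ra X\ra W_p\ra\Sigma U_p$ assemble into a morphism from the constant tower on $X$ to the tower $(W_p)$, with levelwise fibres $(U_p)$. Applying Verdier's $3\times 3$ lemma to the two telescope ($\id-\sigma$) triangles, exactly as in the proof of Theorem~\ref{canonical weight structure for negative dg algebras}, yields a triangle
\[
\op{Mcolim}_{p}U_p\ra X\ra V(X)\ra\Sigma\op{Mcolim}_{p}U_p.
\]
Because $\op{Susp}_{\cd}(\cs)$ is closed under coproducts, positive shifts and extensions it is closed under Milnor colimits, so $U(X):=\op{Mcolim}_pU_p$ lies in it and the displayed triangle is the desired coreflection, proving part~1). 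I expect this gluing to be the crux: Milnor colimits are neither functorial nor exact, so the passage from the ladder of fibre triangles to a triangle of Milnor colimits cannot be done by naive functoriality and must go through the $3\times 3$ lemma (or, when available, the derivator argument of Appendix~\ref{Appendix 1}).

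Finally, for part~2) let $X\in\op{Susp}_{\cd}(\cs)$ and run the above construction. Then $V(X)$ lies in $\op{Susp}_{\cd}(\cs)\cap\op{Susp}_{\cd}(\cs)^{\perp}$, so $\Hom_{\cd}(V(X),V(X))=0$ and $V(X)=0$; hence $X\cong\op{Mcolim}_pU_p$. Writing $X_i=U_i$, the telescope triangle of this Milnor colimit, after one rotation, reads $\coprod_{i\geq 0}X_i\ra X\ra\coprod_{i\geq 0}\Sigma X_i\ra\coprod_{i\geq 0}\Sigma X_i$, which is precisely the asserted triangle, each $X_i$ being an $i$-fold extension of small coproducts of non-negative shifts of objects of $\cs$.
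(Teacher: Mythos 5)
Your proof is correct and follows essentially the same route as the paper's: the same cell-attachment tower (your $W_p$ is the paper's $Y_{p-1}$, built from the canonical evaluation maps), the same compactness argument giving $\Hom_{\cd}(\Sigma^nS,\op{Mcolim}_p W_p)=0$, the same Verdier $3\times 3$ gluing of the fibre triangles into $\op{Mcolim}_p U_p\ra X\ra V(X)\ra\Sigma\op{Mcolim}_p U_p$, and for part~2) the observation that when $X\in\op{Susp}_{\cd}(\cs)$ the co-part lies in $\op{Susp}_{\cd}(\cs)\cap\op{Susp}_{\cd}(\cs)^{\perp}$ and hence vanishes. If anything, you are more explicit than the paper on the two points it leaves implicit, namely the aisle criterion (inclusion admits a right adjoint) deduced from the truncation triangles, and the derivation of the triangle in part~2) from the vanishing of $V(X)$.
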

\begin{proof}
Let $M$ be an object of $\cd$, and let us consider an approximation
\[P_{0}\ra M
\]
of $M$ with respect to the full subcategory of $\cd$ formed by the small coproducts of non negative shifts of objects of $\cs$. Let us consider a triangle
\[P_{0}\arr{f_{0}} M\arr{g_{0}} Y_{0}\ra\Sigma P_{0}
\]
and a new approximation
\[P_{1}\ra Y_{0}
\]
with respect to the same subcategory. By iterating this procedure we get a diagram of the form
\[\xymatrix{M\ar[r]^{g_{0}} & Y_{0}\ar[r]^{y_{0}}\ar@{~>}[dl] & Y_{1}\ar@{~>}[dl]  \\
P_{0}\ar[u]^{f_{0}} & P_{1}\ar[u] & \dots
}
\]
This diagram yields a diagram
\[\xymatrix{\Sigma X_{0}\ar[r]^{\Sigma x_{0}} & \Sigma X_{1}\ar[r]^{\Sigma x_{1}} & \Sigma X_{2}\ar[r] & \dots \\
Y_{0}\ar[r]^{y_{0}}\ar[u] & Y_{1}\ar[r]^{y_{1}}\ar[u] & Y_{2}\ar[r]\ar[u] & \dots \\
M\ar[r]^{\id}\ar[u]^{g_{0}} & M\ar[r]^{\id}\ar[u] & M\ar[r]\ar[u] & \dots \\
P_{0}=X_{0}\ar[r]^{l_{0}}\ar[u]^{f_{0}} & X_{1}\ar[r]^{l_{1}}\ar[u] & X_{2}\ar[u]\ar[r] & \dots
}
\]
in which every column is a triangle. The octahedron axiom implies that each $X_{i}$ is an $i$-fold extension of small coproducts of
non negative shifts of objects of $\cs$. Now, by using \cite[Proposition 1.1.11]{BBD} (\ie, Verdier's $3\times 3$ lemma) we get a diagram
\[
\xymatrix{
\coprod_{i\geq 0}M\ar[r]\ar[d]^{\id-\text{shift}} & \coprod_{i\geq 0}Y_{i}\ar[r]\ar[d]^{\id-\text{shift}} & \coprod_{i\geq 0}\Sigma X_{i}\ar[r]\ar[d] & \Sigma\coprod_{i\geq 0}M\ar[d] \\
\coprod_{i\geq 0}M\ar[r]\ar[d] & \coprod_{i\geq 0}Y_{i}\ar[r]\ar[d] & \coprod_{n\geq 0}\Sigma X_{i}\ar[r]\ar[d] & \Sigma\coprod_{i\geq 0}M\ar[d] \\
M\ar[r]\ar[d] & \Mcolim Y_{i}\ar[r]\ar[d] & X'\ar[r]\ar[d] & \Sigma M\ar[d] \\
\Sigma\coprod_{i\geq 0}M\ar[r] & \Sigma\coprod_{i\geq 0}Y_{i}\ar[r] & \Sigma\coprod_{i\geq 0}\Sigma X_{i}\ar[r] & \Sigma^2\coprod_{i\geq 0}M \\
}
\]
where the columns and rows are triangles. It is clear that $\Sigma^{-1}X'\in\op{Susp}_{\cd}(\cs)$. On the other hand, for each $S\in\cs$ and each $n\geq 0$ we have
\[\Hom_{\cd}(\Sigma^nS,\op{Mcolim}Y_{i})\cong\op{colim}_{i\in\N}\Hom_{\cd}(\Sigma^nS,Y_{i})=0
\]
because the induced morphisms
\[\Hom_{\cd}(\Sigma^nS,Y_{i})\ra \Hom_{\cd}(\Sigma^nS,Y_{i+1})
\]
vanish.
\end{proof}

Of course, one would like to express the objects of the left aisle of $t_{\cs}$ in terms of the objects of $\cs$, for instance as a kind of colimit. In \cite[Proposition III.2.6]{BeligiannisReiten} it is proved that this is the case when $\cs$ satisfies a certain vanishing condition. Here we give an alternative proof of this result:


\begin{theorem}\label{from compact objects to aisles}
Let $\cd$ be a triangulated category with small coproducts, and let $\cs$ be a set of compact objects in $\cd$ such that
\[\Hom_{\cd}(S,\Sigma^n S')=0
\]
for all $S, S'\in\cs$ and each $n\geq 1$. Then every object of $\op{Susp}_{\cd}(\cs)$ is the Milnor colimit of a sequence $X_{0}\ra X_{1}\ra X_{2}\ra\dots$ where $X_{i}$ is an $i$-fold extension of small coproducts of non negative shifts of objects of $\cs$.
\end{theorem}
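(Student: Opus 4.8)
The plan is to construct, for each object $X$ of $\op{Susp}_{\cd}(\cs)$, an explicit cellular tower $X_{0}\to X_{1}\to X_{2}\to\cdots$ by attaching cells one homological degree at a time, and then to identify $X$ with its Milnor colimit by a detection argument. Throughout write $H^{n}(M)$ for the functor $S\mapsto\Hom_{\cd}(\Sigma^{n}S,M)$. The only place the hypothesis $\Hom_{\cd}(S,\Sigma^{n}S')=0$ for $n\geq 1$ enters is the following observation: a small coproduct of shifts $\Sigma^{m}S$ with $S\in\cs$ and $m\geq i$ has $H^{n}=0$ for every $n<i$, since $\Hom_{\cd}(\Sigma^{n}S',\Sigma^{m}S)=\Hom_{\cd}(S',\Sigma^{m-n}S)$ vanishes as soon as $m-n\geq 1$. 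In particular every object of $\op{Susp}_{\cd}(\cs)$ has $H^{n}=0$ for $n<0$, and attaching a coproduct of shifts $\Sigma^{\geq i}$ of objects of $\cs$ to an object changes $H^{n}$ only for $n\geq i$.

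Next I would build the tower by induction so that the structure map $f_{i}\colon X_{i}\to X$ is an isomorphism on $H^{n}$ for every $n<i$, and so that each transition $X_{i}\to X_{i+1}$ has cone a small coproduct of non negative shifts of objects of $\cs$ (refining the tower if necessary so that exactly one such coproduct is attached at each step, which makes $X_{i}$ an $i$-fold extension of the required form). For $X_{0}$ take the coproduct of copies of objects of $\cs$ indexed by $H^{0}(X)$ together with the canonical evaluation map; all lower homology being zero, $f_{0}$ is automatically an isomorphism on $H^{n}$ for $n<0$. Given $f_{i}$ with $H^{<i}(f_{i})$ an isomorphism, I correct degree $i$: adjoining a coproduct of $\Sigma^{i}$-cells that map to $X$ through a generating family of $\coker H^{i}(f_{i})$ makes $H^{i}(f)$ surjective, and then coning off a coproduct of $\Sigma^{i}$-cells generating $\ker H^{i}(f_{i})$ (which attaches $\Sigma^{i+1}$-cells) makes it injective. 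Since $\Sigma^{i}$- and $\Sigma^{i+1}$-cells have homology concentrated in degrees $\geq i$, neither operation disturbs the isomorphisms already obtained in degrees $<i$, and compatibility of $f_{i+1}$ with $f_{i}$ is automatic because the coned-off maps already die in $X$.

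Passing to the Milnor colimit, the maps $f_{i}$ assemble into $f\colon\op{Mcolim}_{i}X_{i}\to X$. As the objects of $\cs$ are compact, each $H^{n}$ sends Milnor colimits to colimits, so $H^{n}(\op{Mcolim}_{i}X_{i})=\op{colim}_{i}H^{n}(X_{i})$; and because $H^{n}(f_{i})$ is an isomorphism for all $i>n$, the map $H^{n}(f)$ is an isomorphism for every $n\in\Z$ and every object of $\cs$. Hence the cone $Z$ of $f$ satisfies $\Hom_{\cd}(\Sigma^{n}S,Z)=0$ for all $n\in\Z$ and all $S\in\cs$. Finally, $X$ and $\op{Mcolim}_{i}X_{i}$, and therefore $Z$, lie in the localizing subcategory generated by $\cs$, inside which $\cs$ is a set of compact generators; so $Z=0$, the map $f$ is an isomorphism, and $X$ is exhibited as the Milnor colimit of the tower $(X_{i})$.

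The main obstacle is the inductive step: one must kill the kernel and fill the cokernel of $H^{i}(f_{i})$ using only shifts $\Sigma^{\geq i}$ of objects of $\cs$, and verify that the two corrections do not reintroduce discrepancies in the lower degrees already arranged (so the order of operations and the confinement of each cell to degrees $\geq i$ must be checked carefully). This is precisely where the hypothesis $\Hom_{\cd}(S,\Sigma^{n}S')=0$ for $n\geq 1$ is indispensable, as it forces the attached cells to be homologically concentrated in degrees $\geq i$; by contrast, the two remaining ingredients---compactness, to commute $H^{n}$ with the Milnor colimit, and generation, to annihilate the orthogonal cone $Z$---are comparatively formal.
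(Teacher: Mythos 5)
Your proof is correct, but it takes a genuinely different route from the paper's. The paper builds its tower by approximating the whole fibre at once: at each stage it attaches cells $\Sigma^nS$ for all $n\geq 0$, indexed by all maps from such shifts into the fibre $C_i$ of $\pi_i\colon X_i\to M$, and the only invariant carried along is surjectivity of $\Hom_{\cd}(\Sigma^nS,X_i)\to\Hom_{\cd}(\Sigma^nS,M)$ for all $n\geq 0$; bijectivity appears only after passing to the Milnor colimit, and injectivity there requires an explicit chase (an element of $\ker\theta^{\we}$ supported in stages $\leq s$, pushed forward to $X_s$, factors through $\alpha_s\beta_s$ and so dies under $f_s$, exhibiting it as lying in the image of $(\id-\sigma)^{\we}$). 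In that proof the hypothesis $\Hom_{\cd}(S,\Sigma^nS')=0$ for $n\geq 1$ is invoked exactly once, at the end, to get $\Hom_{\cd}(S,\Sigma X_\infty)=0$ and hence orthogonality of $\cone(\pi_\infty)$ in degree $0$; the conclusion is then by d\'evissage inside $\op{Susp}_{\cd}(\cs)$, to which that cone belongs. You instead build a Postnikov-style tower one degree at a time, carrying the stronger invariant that $H^{<i}(f_i)$ is invertible; this spends the hypothesis throughout the construction (it is exactly what makes cells of degrees $\geq i$ invisible to $H^{<i}$, e.g.\ $H^i(\Sigma W)=0$ in the kernel-killing step), but in exchange the limit step is soft: $H^n(f)$ is invertible for all $n\in\Z$ (both sides vanish in negative degrees), the cone of $f$ is orthogonal to all shifts of all objects of $\cs$, and since it lies in the localizing subcategory generated by $\cs$ it vanishes --- by the compact-generation theorem you cite, or even more cheaply because the objects all of whose shifts are left orthogonal to that cone form a localizing subcategory containing $\cs$. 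What the paper's route buys is a tower construction and surjectivity bookkeeping that need no hypothesis on $\cs$ at all (it is the same machine as in Theorem~\ref{AlonsoJeremiasSouto}, valid for every $M\in\cd$, which is why that proof simultaneously yields the aisle property); what your route buys is the complete elimination of the injectivity chase. One imprecision to fix in a final write-up: the cells you cone off must generate the kernel of $H^i$ of the already-corrected (surjective) map, not of $f_i$ itself; with that adjustment, and the re-indexing you already mention to match the ``$i$-fold extension'' normalization, the argument goes through.
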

\begin{proof}
Given $M\in\cd$ we will inductively construct a commutative diagram
\[\xymatrix{X_{0}\ar[r]^{f_{0}}\ar[dr]_{\pi_{0}} & X_{1}\ar[r]^{f_{1}}\ar[d]^{\pi_{1}} & \dots\ar[r] & X_{q}\ar[r]^{f_{q}\hspace{0.7cm}}\ar[dll]^{\pi_{q}} & \dots\ko q\geq 0 \\
& M &&&
}
\]
such that:
\begin{enumerate}[a)]
\item $X_{i}$ is an $i$-fold extension of small coproducts of non negative shifts of objects of $\cs$,
\item $\pi_{i}$ induces a surjection
\[\pi_{i}^{\we}:\Hom_{\cd}(\Sigma^nS,X_{i})\ra\Hom_{\cd}(\Sigma^nS,M)
\]
for each $S\in\cs\ko n\geq 0$.
\end{enumerate}
For $i=0$ we take $X_{0}=\coprod_{S\in\cs}\coprod_{n\geq 0}\coprod_{\Hom_{\cd}(\Sigma^nS,M)}\Sigma^nS$ and the obvious morphism
\[\pi_{0}: X_{0}\ra M.
\]
Suppose for some $i\geq 0$ we have constructed $X_{i}$ and $\pi_{i}$. Consider the triangle
\[C_{i}\arr{\alpha_{i}}X_{i}\arr{\pi_{i}}M\ra \Sigma C_{i}
\]
induced by $\pi_{i}$. Consider $Z_{i}=\coprod_{S\in\cs}\coprod_{n\geq 0}\coprod_{\Hom_{\cd}(\Sigma^nS,C_{i})}\Sigma^nS$ and the obvious morphism
\[\beta_{i}:Z_{i}\ra C_{i}.
\]
The triangle
\[Z_{i}\arr{\alpha_{i}\beta_{i}}X_{i}\ra X_{i+1}\ra \Sigma Z_{i}
\]
defines $X_{i+1}$ up to non unique isomorphism. Note that the surjectivity required for $\pi_{i+1}^{\we}$ comes from the surjectivity of $\pi_{i}^{\we}$.

Define $X_{\infty}$ to be the Milnor colimit of the sequence $f_{i}\ko i\geq 0$:
\[
\xymatrix{
\coprod_{i\geq 0}X_{i}\ar[r]^{\id-\sigma} & \coprod_{i\geq 0}X_{i}\ar[r]^{\psi} & X_{\infty}\ar[r] & \Sigma\coprod_{i\geq 0}X_{i}.
}
\]
Consider the morphism
\[\theta=\left[\begin{array}{ccc}\pi_{0}&\pi_{1}&\dots\end{array}\right]:\coprod_{i\geq 0}X_{i}\ra M.
\]
Since $\pi_{i+1}f_{i}=\pi_{i}$ for every $i\geq 0$, we have $\theta(\id-\sigma)=0$, and so we obtain a morphism $\pi_{\infty}:X_{\infty}\ra M$ such that $\pi_{\infty}\psi=\theta$. If we prove that $\pi_{\infty}$ induces an isomorphism
\[
\pi^{\we}_{\infty}:\Hom_{\cd}(\Sigma^nS,X_{\infty})\arr{\sim}\Hom_{\cd}(\Sigma^nS,M)
\]
for every $S\in\cs\ko n\geq 0$, then we have
\[\Hom_{\cd}(\Sigma^nS,\cone(\pi_{\infty}))=0
\]
for every $S\in\cs\ko n\geq 1$. For the case $n=0$, let us consider the exact sequence
\[\Hom_{\cd}(S,X_{\infty})\arr{\sim}\Hom_{\cd}(S,M)\ra\Hom_{\cd}(S,\cone(\pi_{\infty}))\ra\Hom_{\cd}(S,\Sigma X_{\infty})
\]
Since $S$ is compact, there exists a short exact sequence
\[
\coprod_{i\geq 0}\Hom_{\cd}(S,\Sigma X_{i})\ra\Hom_{\cd}(S,\Sigma X_{\infty})\ra\coprod_{i\geq 0}\Hom_{\cd}(S,\Sigma^2X_{i})
\]
>From the hypothesis on the set $\cs$ and the construction of the objects $X_{i}$ we can deduce that both the left and the right hand side of the former sequence vanish, and so $\Hom_{\cd}(S,\Sigma X_{\infty})=0$. Therefore, then we would have
\[
\Hom_{\cd}(\Sigma^nS,\cone(\pi_{\infty}))=0
\]
for every $S\in\cs\ko n\geq 0$. This, by infinite d\'evissage, implies that
\[\Hom_{\cd}(N,\cone(\pi_{\infty}))=0
\]
for each $N\in\op{Susp}(\cs)$. Hence, we have proved that $\op{Susp}_{\cd}(\cs)$ is an aisle in $\cd$.

Let us prove the required bijectivity for $\pi^{\we}_{\infty}$. The surjectivity follows from the identity $\pi^{\we}_{\infty}\psi^{\we}=\theta^{\we}$ and the fact that $\theta^{\we}$ is surjective (thanks to the surjectivity of the $\pi_{i}^{\we}\ko i\geq 0$ and the compactness of the $S\in\cs$). Now consider the commutative diagram
\[
\xymatrix{\coprod_{i\geq 0}\Hom_{\cd}(\Sigma^nS,X_{i})\ar[r]^{(\id-\sigma)^{\we}} & \coprod_{i\geq 0}\Hom_{\cd}(\Sigma^nS,X_{i})\ar[r]^{\psi^{\we}}\ar[dr]_{\theta^{\we}} & \Hom_{\cd}(\Sigma^nS,X_{\infty})\ar[r]\ar[d]^{\pi^{\we}_{\infty}} & 0 \\
&& \Hom_{\cd}(\Sigma^nS,M) &
}
\]
The map $\psi^{\we}$ is surjective since the map
\[
(\Sigma(\id-\sigma))^{\we}:\coprod_{i\geq 0}\Hom_{\cd}(\Sigma^nS,\Sigma X_{i})\ra \coprod_{i\geq 0}\Hom_{\cd}(\Sigma^nS,\Sigma X_{i})
\]
is injective. If we prove that the kernel of $\theta^{\we}$ is contained in the image of $(\id-\sigma)^{\we}$, then we obtain the injectivity of $\pi^{\we}_{\infty}$ by an easy diagram chase. Let
\[g=\left[\begin{array}{ccc}g_{0} & g_{1} & \dots\end{array}\right]^{t}:\Sigma^nS\ra\coprod_{i\geq 0}X_{i}
\]
be such that
\[\left[\begin{array}{ccc}\pi_{0} & \pi_{1} & \dots\end{array}\right]\left[\begin{array}{ccc}g_{0}&g_{1}&\dots\end{array}\right]^{t}=\pi_{0}g_{0}+\pi_{1}g_{1}+\dots=0.
\]
Notice that there exists an $s\geq 0$ such that $g_{s+1}=g_{s+2}=\dots=0$. Then
\[\pi_{0}g_{0}+\dots +\pi_{s}g_{s}=0
\]
implies
\[\pi_{s}(f_{s-1}\dots f_{0}g_{0}+ f_{s-1}\dots f_{1}g_{1}+\dots +g_{s})=0
\]
and so the morphism
\[f_{s-1}\dots f_{0}g_{0}+ f_{s-1}\dots f_{1}g_{1}+\dots +g_{s}
\]
factors through $\alpha_{s}$:
\[f_{s-1}\dots f_{0}g_{0}+ f_{s-1}\dots f_{1}g_{1}+\dots +g_{s}=\alpha_{s}\gamma_{s}:\Sigma^nS\ra C_{s}\ra X_{s}.
\]
By construction of $Z_{s}$ we have that $\gamma_{s}$ factors through $\beta_{s}$, and so
\[f_{s-1}\dots f_{0}g_{0}+ f_{s-1}\dots f_{1}g_{1}+\dots +g_{s}=\alpha_{s}\beta_{s}\xi_{s}.
\]
This implies
\[f_{s}\dots f_{0}g_{0}+ f_{s}\dots f_{1}g_{1}+\dots +f_{s}g_{s}=f_{s}\alpha_{s}\beta_{s}\xi_{s}=0,
\]
since $f_{s}\alpha_{s}\beta_{s}=0$ by construction of $f_{s}$. Therefore, the morphism
\[h:\Sigma^nS\ra\coprod_{i\geq 0}X_{i}
\]
with non-vanishing components
\[\Sigma^nS\ra X_{r}\ra\coprod_{i\geq 0}X_{i}
\]
induced by
\[g_{r}+\dots + f_{r-1}\dots f_{1}g_{1}+f_{r-1}\dots f_{0}g_{0}: \Sigma^nS\ra X_{r}
\]
with $0\leq r\leq s$, satisfies $\varphi^{\we}(h)=g$.
\end{proof}

In practice, every triangulated category is at the basis of a triangulated derivator (see \cite{Cisinski03}). If we assume that our triangulated category $\cd$  satisfies this property, we can use Appendix 1 to get rid of the extra hypothesis on the set $\cs$ of compact objects, to simplify the proof of Theorem~\ref{from compact objects to aisles} and to enhance the proof of Theorem~\ref{AlonsoJeremiasSouto}.

\begin{theorem}\label{from compact objects to aisles with derivators}
Let $\mathbb{D}$ be a triangulated derivator, and let $\cs$ be a set of compact objects of $\mathbb{D}(e)$. Then:
\begin{itemize}
\item[1)] the smallest full subcategory $\op{Susp}_{\mathbb{D}(e)}(\cs)$ of $\mathbb{D}(e)$ containing $\cs$ and closed under extensions, positive shifts and small coproducts is a left aisle,
\item[2)] every object of $\op{Susp}_{\mathbb{D}(e)}(\cs)$ is the Milnor colimit of a sequence $X_{0}\ra X_{1}\ra X_{2}\ra\dots$ where $X_{i}$ is an $i$-fold extension of small coproducts of non negative shifts of objects of $\cs$.
\end{itemize}
\end{theorem}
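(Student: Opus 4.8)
\section*{Proof proposal}

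The plan is to reduce everything to Theorem~\ref{AlonsoJeremiasSouto} and to use the derivator input of Appendix~1 in order to convert the fragile Milnor-colimit bookkeeping into a genuine, functorial triangle of homotopy colimits; this is exactly what lets us discard the extra vanishing hypothesis present in Theorem~\ref{from compact objects to aisles}. Part~1) is immediate: applying Theorem~\ref{AlonsoJeremiasSouto}.1) to the triangulated category $\cd=\mathbb{D}(e)$, which has small coproducts, and to the set $\cs$ of compact objects shows that $\cu:=\op{Susp}_{\mathbb{D}(e)}(\cs)$ is a left aisle. Let $\cu^{\perp}$ be its co-aisle, the full subcategory of objects $W$ with $\Hom(\Sigma^{n}S,W)=0$ for all $S\in\cs$ and all $n\geq 0$; it is stable under $\Sigma^{-1}$, and $\Hom(\cu,\cu^{\perp})=0$.

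For part~2), fix $M\in\cu$. I would first run the construction from the proof of Theorem~\ref{AlonsoJeremiasSouto}: it produces a sequence $X_{0}\to X_{1}\to\cdots$ in which each $X_{i}$ is an $i$-fold extension of small coproducts of non-negative shifts of objects of $\cs$, together with maps $\pi_{i}\colon X_{i}\to M$ compatible with the transition maps, such that $\pi_{i}$ induces a surjection on $\Hom(\Sigma^{n}S,-)$ for all $S\in\cs$ and all $n\geq 0$, and such that the maps $X_{i}\to X_{i+1}$ annihilate, for $n\geq -1$, the kernel of the map induced by $\pi_{i}$ on $\Hom(\Sigma^{n}S,-)$; both features come from the approximations ranging over all non-negative shifts of $\cs$. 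Completing $\pi_{i}$ to a triangle $X_{i}\to M\to Y_{i}\to\Sigma X_{i}$ exhibits a morphism of sequences $(X_{i})\to\underline{M}$ in $\ul{\Hom}(\N,\mathbb{D}(e))$ with levelwise cone $(Y_{i})$, where $\underline{M}$ is the constant sequence on $M$ with identity transition maps.

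Next I would feed this into the derivator. Using Proposition~\ref{LiftingMilnorHocolim} I lift $(X_{i})\to\underline{M}$ to a morphism in $\mathbb{D}(\N^{\text{op}})$, form its cone there, and apply the triangle functor $\op{hocolim}$ — equivalently I invoke Corollary~\ref{Milnor colimits are quasi exact}. Since $\op{hocolim}$ of a lift computes the Milnor colimit of its diagram and $\op{Mcolim}\underline{M}\cong M$, this yields an honest triangle $\op{Mcolim}X_{i}\to M\to\op{Mcolim}Y_{i}\to\Sigma\op{Mcolim}X_{i}$ in $\mathbb{D}(e)$, with $\op{Mcolim}X_{i}\in\cu$, built from the $X_{i}\in\cu$ by coproducts, a positive shift and an extension. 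The transition maps of the cone sequence act by zero on each $\Hom(\Sigma^{n}S,-)$ with $n\geq 0$ — a consequence of the surjectivity of $\pi_{i}$ in non-negative degrees together with the annihilation of kernels, hence insensitive to the non-unique choices made — so, as $\Hom(\Sigma^{n}S,-)$ commutes with Milnor colimits along the compact $S$, we get $\Hom(\Sigma^{n}S,\op{Mcolim}Y_{i})=\op{colim}_{i}\Hom(\Sigma^{n}S,Y_{i})=0$ for $n\geq 0$, i.e. $\op{Mcolim}Y_{i}\in\cu^{\perp}$. Orthogonality now finishes the argument: since $M\in\cu$, the map $M\to\op{Mcolim}Y_{i}$ lies in $\Hom(\cu,\cu^{\perp})=0$, so the triangle splits as $\op{Mcolim}X_{i}\cong M\oplus\Sigma^{-1}\op{Mcolim}Y_{i}$; the resulting retraction $\op{Mcolim}X_{i}\to\Sigma^{-1}\op{Mcolim}Y_{i}$ again lies in $\Hom(\cu,\cu^{\perp})=0$ (using that $\cu^{\perp}$ is stable under $\Sigma^{-1}$), whence $\op{Mcolim}Y_{i}=0$ and $M\cong\op{Mcolim}X_{i}$, which is exactly the asserted description.

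I expect the main obstacle to be precisely the assembly of the individual Milnor colimits into a single triangle: in a general triangulated category Milnor colimits are neither functorial nor exact, so the naive $3\times 3$ argument underlying Theorem~\ref{AlonsoJeremiasSouto} is unreliable. The entire point of passing to the derivator is to cure this, since lifting the sequence to $\mathbb{D}(\N^{\text{op}})$ makes cones functorial and $\op{hocolim}$ an exact triangle functor; Proposition~\ref{LiftingMilnorHocolim} and Corollary~\ref{Milnor colimits are quasi exact} encode exactly this. The secondary subtlety, responsible for the extra hypothesis in Theorem~\ref{from compact objects to aisles}, is the degree-zero right-orthogonality of the error term $\op{Mcolim}Y_{i}$; here it becomes automatic because the approximations cover all non-negative shifts, so no assumption on $\Hom(S,\Sigma^{n}S')$ is needed.
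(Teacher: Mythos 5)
Your proposal is correct, and its skeleton coincides with the paper's: run the approximation tower from the proof of Theorem~\ref{AlonsoJeremiasSouto}, use Proposition~\ref{LiftingMilnorHocolim} together with Corollary~\ref{Milnor colimits are quasi exact} to turn the levelwise triangles $X_i\ra M\ra Y_i\ra \Sigma X_i$ into a genuine triangle of Milnor colimits, and finish by orthogonality. The one genuine difference is the rotation in which you invoke Corollary~\ref{Milnor colimits are quasi exact}. You lift the morphism $(X_i)\ra\ul{M}$, so the derivator keeps the natural maps on the $(X_i)$-side and replaces the cone side by a sequence $Z'$ that is only levelwise isomorphic to $(Y_i)$, with transition maps you do not control; you must therefore prove that those maps still induce zero on $\Hom(\Sigma^n S,?)$ for $n\geq 0$. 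Your reduction of this to the surjectivity of $\Hom(\Sigma^nS,X_i)\ra\Hom(\Sigma^nS,M)$ plus the kernel-annihilation property of the maps $X_i\ra X_{i+1}$ is sound, and that latter property does hold in the tower (a map $\Sigma^nS\ra X_i$ killed by $\pi_i$ factors through $\Sigma^{-1}Y_i$, hence for $n\geq -1$ through $\Sigma^{-1}P_{i+1}$, and the composite $\Sigma^{-1}P_{i+1}\ra X_i\ra X_{i+1}$ vanishes by the octahedron used to construct $X_{i+1}$); but this is the most delicate point of your variant and you only assert it. The paper instead applies the corollary to the morphism $\ul{M}\ra (Y_i)$: it keeps $(Y_i)$ with its natural transition maps, for which $\Hom(\Sigma^nS,\op{Mcolim}Y_i)=\colim_i\Hom(\Sigma^nS,Y_i)=0$ is immediate from the construction, and lets the derivator replace $(X_i)$ by a levelwise isomorphic $(X'_i)$, which is harmless since membership of $\op{Mcolim}X'$ in $\op{Susp}_{\mathbb{D}(e)}(\cs)$ uses only levelwise membership. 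So the paper's rotation avoids your extra argument entirely; in exchange, yours yields the marginally sharper conclusion that $M$ is the Milnor colimit of the original sequence $(X_i)$ rather than of a levelwise isomorphic replacement, and you also spell out the concluding splitting argument (the map $M\ra\op{Mcolim}Z'$ vanishes since $M\in\cu$, the triangle splits, and the retraction onto $\Sigma^{-1}\op{Mcolim}Z'$ vanishes as well, hence $\op{Mcolim}Z'=0$ and $M\cong\op{Mcolim}X_i$), which the paper's written proof leaves implicit after establishing $\op{Mcolim}X'\in\op{Susp}_{\mathbb{D}(e)}(\cs)$.
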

\begin{proof}
The proof starts as the one of Theorem\ref{AlonsoJeremiasSouto}. Thus, starting from an object $M$ of $\mathbb{D}(e)$ we produce a diagram of the form
\[
\xymatrix{\Sigma X_{0}\ar[r]^{\Sigma x_{0}} & \Sigma X_{1}\ar[r]^{\Sigma x_{1}} & \Sigma X_{2}\ar[r] & \dots \\
Y_{0}\ar[r]^{y_{0}}\ar[u] & Y_{1}\ar[r]^{y_{1}}\ar[u] & Y_{2}\ar[r]\ar[u] & \dots \\
M\ar[r]^{\id}\ar[u]^{g_{0}} & M\ar[r]^{\id}\ar[u] & M\ar[r]\ar[u] & \dots \\
P_{0}=X_{0}\ar[r]^{l_{0}}\ar[u]^{f_{0}} & X_{1}\ar[r]^{l_{1}}\ar[u] & X_{2}\ar[u]\ar[r] & \dots
}
\]
in which every column is a triangle, each $X_{i}$ is an $i$-fold extension of small coproducts of non negative shifts of objects of $\cs$ and
\[\Hom_{\mathbb{D}(e)}(\Sigma^nS,\op{Mcolim}Y_{i})=0
\]
for each $S\in\cs$ and each $n\geq 0$. Let us regard the rows of this diagram as objects $X\ko M$ and $Y$ of the category $\ul{\Hom}(\N,\mathbb{D}(e))$ of presheaves. Thanks to Corollary
\ref{Milnor colimits are quasi exact} we know that there exists a triangle
\[
\op{Mcolim}X'\ra M\ra \op{Mcolim}Y\ra \Sigma\op{Mcolim}X',
\]
where $X'\in\ul{\Hom}(\N,\mathbb{D}(e))$ is such that $X'_{i}\cong X_{i}$ for each $i\geq 0$. In particular, $X'_{i}\in\op{Susp}_{\mathbb{D}(e)}(\cs)$ for all $i\geq 0$, which implies that $\op{Mcolim}X'\in\op{Susp}_{\mathbb{D}(e)}(\cs)$.
\end{proof}



\bibliographystyle{plain}
\bibliography{mybibliography}

\printindex
\end{document}